\documentclass[12pt, reqno, twoside, letterpaper]{amsart}

\usepackage{amsmath,amssymb,amsbsy,amsfonts,amsthm,latexsym,
	amsopn,amstext,amsxtra,euscript,amscd,stmaryrd,mathrsfs,
	cite,array,tikz}
\usepackage{booktabs}
\usepackage{multirow}

\numberwithin{equation}{section}

\usepackage{color}

\def\eps{\varepsilon}

\def\fl#1{\left\lfloor#1\right\rfloor}

\def\({\left(}
\def\){\right)}

\newcommand{\ind}[1]{\ensuremath{\mathbf{1}_{#1}}}  
\newcommand{\e}{\ensuremath{\mathbf{e}}}

\newcommand{\cA}{\ensuremath{\mathcal{A}}}
\newcommand{\cB}{\ensuremath{\mathcal{B}}}

\newcommand{\cD}{\ensuremath{\mathcal{D}}}

\newcommand{\cF}{\ensuremath{\mathcal{F}}}

\newcommand{\cI}{\ensuremath{\mathcal{I}}}
\newcommand{\cJ}{\ensuremath{\mathcal{J}}}

\newcommand{\cL}{\ensuremath{\mathcal{L}}}

\newcommand{\cN}{\ensuremath{\mathcal{N}}}

\newcommand{\cS}{\ensuremath{\mathcal{S}}}

\newcommand{\cT}{\ensuremath{\mathcal{T}}}

\newcommand{\CC}{\ensuremath{\mathbb{C}}}

\newcommand{\NN}{\ensuremath{\mathbb{N}}}

\newcommand{\RR}{\ensuremath{\mathbb{R}}}

\usepackage[margin=2.5cm]{geometry}

\usepackage{amsthm}
\newtheoremstyle{customthm}
{1em}                    
{1em}                    
{\itshape}               
{}                       
{\scshape}               
{.}                      
{5pt plus 1pt minus 1pt} 
{}                       

\newtheoremstyle{customrem}
{1em}                    
{1em}                    
{}                       
{}                       
{\scshape}               
{.}                      
{5pt plus 1pt minus 1pt} 
{}                       

\theoremstyle{customthm}

\newtheorem{X}{X}[section]
\newtheorem{theorem}[X]{Theorem}  
  
\newtheorem{lemma}[X]{Lemma}

\newtheorem{proposition}[X]{Proposition}

\theoremstyle{customrem}

\usepackage{etoolbox}

\AtEndEnvironment{definition}{\null\hfill\qedsymbol}

\renewcommand{\le}{\ensuremath{\leqslant}}
\renewcommand{\ge}{\ensuremath{\geqslant}}
\def\fl#1{\left\lfloor#1\right\rfloor}

\renewcommand{\pod}[1]{\mathchoice
	{\allowbreak \if@display \mkern 5mu\else \mkern 5mu\fi (#1)}
	{\allowbreak \if@display \mkern 5mu\else \mkern 5mu\fi (#1)}
	{\mkern4mu(#1)}
	{\mkern4mu(#1)}
}

\newcommand*{\defeq}{\mathrel{\vcenter{\baselineskip0.5ex \lineskiplimit0pt
			\hbox{\scriptsize.}\hbox{\scriptsize.}}}%
	=}

\newcommand*{\eqdef}{=\mathrel{\vcenter{\baselineskip0.5ex \lineskiplimit0pt
			\hbox{\scriptsize.}\hbox{\scriptsize.}}}%
}

\DeclareSymbolFont{EUEX}{U}{euex}{m}{n}

\DeclareSymbolFont{euexlargesymbols}{U}{euex}{m}{n}
\DeclareMathSymbol{\intop}{\mathop}{euexlargesymbols}{"52}
\def\int{\intop\nolimits}

\DeclareSymbolFont{euexsymbols}     {U}{euex}{m}{n}
\DeclareMathSymbol{\smallint}{\mathop}{euexsymbols}{"52}

\usepackage{ifthen}
\usepackage{xifthen}

\def\sums{                        
	\@ifnextchar[
	{\sums@i}
	{\ensuremath{\sum}}    
}
\def\sums@i[#1]{
	\@ifnextchar[
	{\sums@ii{#1}}
	{\ensuremath{\sum_{#1}}}
}
\def\sums@ii#1[#2]{
	\@ifnextchar[
	{\sums@iii{#1}{#2}}
	{\ensuremath{\sum_{\substack{#1 \\ #2}}}}
}
\def\sums@iii#1#2[#3]{
	\@ifnextchar[
	{\sums@iv{#1}{#2}{#3}}
	{\ensuremath{\sum_{\substack{#1 \\ #2 \\ #3}}}}
}
\def\sums@iv#1#2#3[#4]{
	\@ifnextchar[
	{\sums@v{#1}{#2}{#3}{#4}}
	{\ensuremath{\sum_{\substack{#1 \\ #2 \\ #3 \\ #4}}}}
}
\def\sums@v#1#2#3#4[#5]{
	{\ensuremath{\sum_{\substack{#1 \\ #2 \\ #3 \\ #4 \\ #5}}}}
}

\def\sumss[#1]{
	\@ifnextchar[
	{\sumss@i[#1]}
	{
		\ifthenelse{\isempty{#1}} 
		{\ensuremath{\sum}}       
		{
			\ifthenelse{\equal{#1}{'}}
			{\ensuremath{\sideset{}{^{\prime}}{\sum}}}
			{\ensuremath{\sideset{}{^{#1}}{\sum}}} 
		}  
	}
}    
\def\sumss@i[#1][#2]{
	\@ifnextchar[
	{\sumss@ii[#1]{#2}}
	{
		\ifthenelse{\isempty{#1}} 
		{\ensuremath{\sum_{#2}}}       
		{
			\ifthenelse{\equal{#1}{'}}
			{\ensuremath{\sideset{}{^{\prime}}{\sum}_{#2}}}
			{\ensuremath{\sideset{}{^{#1}}{\sum}_{#2}}} 
		}  
	}
}
\def\sumss@ii[#1]#2[#3]{
	\@ifnextchar[
	{\sumss@iii[#1]{#2}{#3}}
	{
		\ifthenelse{\isempty{#1}} 
		{\ensuremath{\sum_{\substack{#2 \\ #3}}}}       
		{
			\ifthenelse{\equal{#1}{'}}
			{\ensuremath{\sideset{}{^{\prime}}{\sum}_{\substack{#2 \\ #3}}}}
			{\ensuremath{\sideset{}{^{#1}}{\sum}_{\substack{#2 \\ #3}}}}
		} 
	}
}
\def\sumss@iii[#1]#2#3[#4]{
	\@ifnextchar[
	{\sumss@iv[#1]{#2}{#3}{#4}}
	{
		\ifthenelse{\isempty{#1}} 
		{\ensuremath{\sum_{\substack{#2 \\ #3 \\ #4}}}}       
		{
			\ifthenelse{\equal{#1}{'}}
			{\ensuremath{\sideset{}{^{\prime}}{\sum}_{\substack{#2 \\ #3 \\ #4}}}}
			{\ensuremath{\sideset{}{^{#1}}{\sum}_{\substack{#2 \\ #3 \\ #4}}}} 
		}  
	}
}
\def\sumss@iv[#1]#2#3#4[#5]{
	\@ifnextchar[
	{\sumss@v[#1]{#2}{#3}{#4}{#5}}
	{
		\ifthenelse{\isempty{#1}} 
		{\ensuremath{\sum_{\substack{#2 \\ #3 \\ #4 \\ #5}}}}       
		{
			\ifthenelse{\equal{#1}{'}}
			{\ensuremath{\sideset{}{^{\prime}}{\sum}_{\substack{#2 \\ #3 \\ #4 \\ #5}}}}
			{\ensuremath{\sideset{}{^{#1}}{\sum}_{\substack{#2 \\ #3 \\ #4 \\ #5}}}} 
		}  
	}
}
\def\sumss@v[#1]#2#3#4#5[#6]{
	{\ifthenelse{\isempty{#1}} 
		{\ensuremath{\sum_{\substack{#2 \\ #3 \\ #4 \\ #5 \\ #6 }}}}       
		{
			\ifthenelse{\equal{#1}{'}}
			{\ensuremath{\sideset{}{^{\prime}}{\sum}_{\substack{#2 \\ #3 \\ #4 \\ #5 \\ #6 }}}}
			{\ensuremath{\sideset{}{^{#1}}{\sum}_{\substack{#2 \\ #3 \\ #4 \\ #5 \\ #6 }}}} 
		}  
	}
}

\def\sumsstxt[#1]{
	\@ifnextchar[
	{\sumsstxt@i[#1]}
	{
		\ifthenelse{\isempty{#1}} 
		{\ensuremath{\textstyle\sum}}       
		{
			\ifthenelse{\equal{#1}{'}}
			{\ensuremath{\sideset{}{^{\prime}}{\textstyle\sum}}}
			{\ensuremath{\sideset{}{^{#1}}{\textstyle\sum}}} 
		}  
	}
}    
\def\sumsstxt@i[#1][#2]{
	\@ifnextchar[
	{\sumsstxt@ii[#1]{#2}}
	{
		\ifthenelse{\isempty{#1}} 
		{\ensuremath{\textstyle\sum_{#2}}}       
		{
			\ifthenelse{\equal{#1}{'}}
			{\ensuremath{\sideset{}{^{\prime}}{\textstyle\sum}_{#2}}}
			{\ensuremath{\sideset{}{^{#1}}{\textstyle\sum}_{#2}}} 
		}  
	}
}
\def\sumsstxt@ii[#1]#2[#3]{
	\@ifnextchar[
	{\sumsstxt@iii[#1]{#2}{#3}}
	{
		\ifthenelse{\isempty{#1}} 
		{\ensuremath{\textstyle\sum_{\substack{#2 \\ #3}}}}       
		{
			\ifthenelse{\equal{#1}{'}}
			{\ensuremath{\sideset{}{^{\prime}}{\textstyle\sum}_{\substack{#2 \\ #3}}}}
			{\ensuremath{\sideset{}{^{#1}}{\textstyle\sum}_{\substack{#2 \\ #3}}}}
		} 
	}
}
\def\sumsstxt@iii[#1]#2#3[#4]{
	\@ifnextchar[
	{\sumsstxt@iv[#1]{#2}{#3}{#4}}
	{
		\ifthenelse{\isempty{#1}} 
		{\ensuremath{\textstyle\sum_{\substack{#2 \\ #3 \\ #4}}}}       
		{
			\ifthenelse{\equal{#1}{'}}
			{\ensuremath{\sideset{}{^{\prime}}{\textstyle\sum}_{\substack{#2 \\ #3 \\ #4}}}}
			{\ensuremath{\sideset{}{^{#1}}{\textstyle\sum}_{\substack{#2 \\ #3 \\ #4}}}} 
		}  
	}
}
\def\sumsstxt@iv[#1]#2#3#4[#5]{
	{\ifthenelse{\isempty{#1}} 
		{\ensuremath{\textstyle\sum_{\substack{#2 \\ #3 \\ #4 \\ #5}}}}       
		{
			\ifthenelse{\equal{#1}{'}}
			{\ensuremath{\sideset{}{^{\prime}}{\textstyle\sum}_{\substack{#2 \\ #3 \\ #4 \\ #5}}}}
			{\ensuremath{\sideset{}{^{#1}}{\textstyle\sum}_{\substack{#2 \\ #3 \\ #4 \\ #5}}}} 
		}  
	}
}

\def\prods{              
	\@ifnextchar[
	{\prods@i}
	{\ensuremath{\prod}}    
}
\def\prods@i[#1]{
	\@ifnextchar[
	{\prods@ii{#1}}
	{\ensuremath{\prod_{#1}}}
}
\def\prods@ii#1[#2]{
	\@ifnextchar[
	{\prods@iii{#1}{#2}}
	{\ensuremath{\prod_{\substack{#1 \\ #2}}}}
}
\def\prods@iii#1#2[#3]{
	\@ifnextchar[
	{\prods@iv{#1}{#2}{#3}}
	{\ensuremath{\prod_{\substack{#1 \\ #2 \\ #3}}}}
}
\def\prods@iv#1#2#3[#4]{
	{\ensuremath{\prod_{\substack{#1 \\ #2 \\ #3 \\ #4}}}}
}
\newcommand{\RNum}[1]{\uppercase\expandafter{\romannumeral #1\relax}}
\allowdisplaybreaks

\title[Piatetski-Shapiro Primes in short intervals]
{Piatetski-Shapiro Primes in short intervals}

\author[Lingyu Guo]{Lingyu Guo}

\address{School of Mathematics and Statistics, Xi'an Jiaotong University, Xi'an,Shaanxi,China.}

\email{guo.lingyu@foxmail.com}

\author[Victor Zhenyu Guo]{Victor Zhenyu Guo}

\address{School of Mathematics and Statistics, Xi'an Jiaotong University, Xi'an, Shaanxi, China.}

\email{guozyv@xjtu.edu.cn; vzguo@foxmail.com}

\date{\today}

\begin{document}
	
	\begin{abstract}
    The existence of primes in a short interval, which asks if there are prime numbers in the interval $[x, x + x^\theta]$, is a core problem in number theory. Guth and Maynard proved the best known result for this problem with an asymptotic formula while Baker, Harman and Pintz proved the best lower bound result. 
    
    In this article, we focus on Piatetski-Shapiro primes in a short interval. The study of Piatetski-Shapiro primes of the form $\lfloor n^c \rfloor$ is an approximation of the well-known conjecture that there exist infinitely many primes of the form $n^2+1$. We prove the existence of such primes under restrictions on $\theta$ and $c$ with an asymptotic formula and a lower bound, respectively.  
	\end{abstract}
	
	\maketitle
	
	\begin{quote}
		\textbf{MSC Numbers:} 11B83; 11N05; 11L07.
	\end{quote}

	\begin{quote}
		\textbf{Keywords:} Piatetski-Shapiro sequence; prime; exponential sum; short interval. 
	\end{quote}
	
	
	
	\tableofcontents
	
	\newpage
	\section{Introduction}
	
The existence of primes in a short interval, which asks if there is a prime number in the interval $[x, x + x^\theta]$, is a core problem in number theory. Hoheisel \cite{Hoh1930} firstly proved that $\theta = 1 - 1/33000$ is admissible. A remarkable result is by Huxley \cite{Hux}, who proved that 
\begin{equation}
	\label{eq:primeasy}
	\pi(x+x^\theta) - \pi(x)\sim \frac{x^\theta}{\log x},
\end{equation}
where $\pi(x)$ is the prime counting function and $\theta > 7/12$. Guth and Maynard \cite{GM2025} proved the best range of this result with $\theta > 17/30$. If one considers a lower bound result of \eqref{eq:primeasy} instead of an asymptotic formula, Baker, Harman and Pintz \cite{BHP} gave the current best result that for $\theta > 0.525$ and all large enough $x$ it follows that
$$
\pi(x+x^\theta) - \pi(x) \gg \frac{x^\theta}{\log x}. 
$$

The Piatetski-Shapiro sequences are sequences of the form
$$
\cN_c \defeq (\lfloor n^{c} \rfloor)_{n=1}^\infty,
$$
where $\fl{\cdot}$ is the integer part. Piatetski-Shapiro~\cite{PS} proved the Piatetski-Shapiro prime number theorem stating that for $ 1 < c < 12/11$ the counting function
$$
\pi_c(x) \defeq \# \big\{\text{\rm prime~} p\le x : p \in \cN_{c} \big\}
$$
satisfies the asymptotic relation
\begin{equation*}
	\pi_c(x) = (1 + o(1)) \frac{x^{1/c}}{\log x} \qquad \text{~\rm as } x \to \infty.
\end{equation*}
The admissible range for $c$ of the above formula has been extended many times and is currently known to hold for all $1 < c < 2817/2426$ thanks to Rivat and Sargos~\cite{RiSa}. Rivat and Wu~\cite{RiWu} also showed that there are infinitely many Piatetski-Shapiro primes for $1 < c < 243/205$ without an asymptotic formula. The estimation of Piatetski-Shapiro primes is an approximation of the well-known conjecture that there exist infinitely many primes of the form $n^2+1$. We refer the readers to \cite{GGL} for a survey on the investigations of admissible ranges for $c$ in this problem. 

We are interested in counting Piatetski-Shapiro primes in a short interval. For a result with an asymptotic formula, we prove the following theorem; also see Figure \ref{fig:thm1} about the admissible range. 

\begin{theorem}\label{thm1}
	Let $\theta \in (2/3,1)$ and $\gamma \defeq c^{-1}<1$. We have
\begin{align*}
\pi_{c}(x+x^{\theta}) - \pi_{c}(x) = \frac{\gamma x^{\theta+\gamma-1}}{\log x} + O\Big(\frac{x^{\theta+\gamma-1}}{\log^2 x}\Big)
\end{align*}
provided that 

\begin{equation}\label{eq:thm1}
\gamma > 
\begin{cases}
	\dfrac{20-3\theta}{18}, & \text{if} \quad \dfrac{2}{3} < \theta \le \dfrac{40}{51}; \\[1em]
	\max \left( \dfrac{1+\theta}{2},\; \dfrac{10-4\theta}{7},\; \dfrac{17-3\theta}{15} \right), & \text{if} \quad \dfrac{40}{51} < \theta \le \dfrac{10}{11}; \\[1em]
	\max \left( \dfrac{15-6\theta}{10},\; \dfrac{17-3\theta}{15} \right), & \text{if} \quad \dfrac{10}{11} < \theta < 1.
\end{cases}
\end{equation}

\end{theorem}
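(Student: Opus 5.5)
We follow the standard Piatetski-Shapiro route, adapted to the short interval $(x,x+x^\theta]$. A prime $p$ lies in $\cN_c$ exactly when $\fl{-p^\gamma}-\fl{-(p+1)^\gamma}=1$. Since $\gamma<1$, this quantity is $0$ or $1$ for large $p$. Hence, writing $\psi(t)=t-\fl{t}-1/2$,
\begin{align*}
\sum_{x<p\le x+x^\theta}\Big(\fl{-p^\gamma}-\fl{-(p+1)^\gamma}\Big)
=\sum_{x<p\le x+x^\theta}\big((p+1)^\gamma-p^\gamma\big)
+\sum_{x<p\le x+x^\theta}\big(\psi(-(p+1)^\gamma)-\psi(-p^\gamma)\big).
\end{align*}
It is convenient to work with $\Lambda(n)$ and remove prime powers and the partial summation at the end. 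Since $\theta>2/3>7/12$, Huxley's theorem \eqref{eq:primeasy} holds. Partial summation then gives the main term
\[
\sum_{x<p\le x+x^\theta}\gamma p^{\gamma-1}\big(1+O(x^{-1})\big)=\frac{\gamma x^{\theta+\gamma-1}}{\log x}+O\Big(\frac{x^{\theta+\gamma-1}}{\log^2x}\Big).
\]
Everything therefore reduces to showing
\[
\sum_{x<n\le x+x^\theta}\Lambda(n)\big(\psi(-(n+1)^\gamma)-\psi(-n^\gamma)\big)\ll x^{\theta+\gamma-1-\eps}.
\]

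Next we expand $\psi$ with Vaaler's trigonometric polynomial of length $H=x^{1-\gamma+\eps}$. The truncation error is controlled by a sum of $\min(1,1/(H\|n^\gamma\|))$, which is handled in the usual way. The main contribution becomes
\[
\sum_{h\le H}\frac1h\Big|\sum_{x<n\le x+x^\theta}\Lambda(n)\,\phi_h(n)\,e(hn^\gamma)\Big|,\qquad \phi_h(n)=e\big(h((n+1)^\gamma-n^\gamma)\big)-1\ll hx^{\gamma-1}.
\]
We remove $\phi_h$ by partial summation. It then suffices to prove, for $N\asymp x$, an interval $I\subset(N,N+N^\theta]$, and $H$ as above, that
\[
\sum_{h\sim H_1}\Big|\sum_{n\in I}\Lambda(n)e(hn^\gamma)\Big|\ll x^{\theta-\eps}\quad\text{for every } H_1\le H.
\]
We decompose $\Lambda$ with Heath-Brown's identity (or Vaughan's) on the short interval. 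This produces Type I sums $\sum_{m}a_m\sum_{\ell}e(h(m\ell)^\gamma)$ and Type II sums $\sum\sum a_mb_\ell e(h(m\ell)^\gamma)$ with $m\ell\in I$. The important feature is that the inner variable now runs over a range of length about $N^\theta/m$, not $N/m$.

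For the Type I sums, we estimate the inner sum over $\ell$ with exponent pairs. We use van der Corput's $(1/2,1/2)$ bound or the second-derivative test, applied on an interval of length $x^\theta/m$ with second derivative of size $hx^{\gamma}m^{2}/x^{2}$. For the Type II sums, we apply Cauchy–Schwarz and then Weyl–van der Corput differencing, taking the shift length $Q$ to be at most the short range. After that we use the second-derivative test or the pair $(1/6,2/3)$, jointly in $h$ and the differenced variable, following the Baker–Harman–Rivat treatment. Each piece yields a condition that is linear in $(\theta,\gamma)$, and the three-way case split in \eqref{eq:thm1} should come from optimizing the splitting parameters of the decomposition. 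Roughly, $(1+\theta)/2$ corresponds to the trivial Type I limit. The bounds $(10-4\theta)/7$ and $(15-6\theta)/10$ come from Type II with different differencing choices. The bounds $(17-3\theta)/15$ and $(20-3\theta)/18$ come from a Type I/II boundary where an exponent-pair estimate is required.

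We expect the main obstacle to be the Type II estimate. Because of the short range, the usual ranges for the bilinear variables shrink, and $Q$ must be chosen compatibly with both $x^\theta/m$ and $m$. Balancing these choices against the Type I admissible range, so that the Heath-Brown decomposition always lands in an admissible region, is what produces the piecewise boundary at $\theta=40/51$ and $\theta=10/11$. We would first carry out the computations with general parameters and then verify the case boundaries by comparing the resulting linear constraints.
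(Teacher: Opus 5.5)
\textbf{Verdict: genuine gap.} Your front end matches the paper: the indicator of $\cN_c$, Huxley's theorem for the main term, Vaaler's polynomial with $H=x^{1-\gamma+\eps}$, partial summation to remove $\phi_h$, and Heath-Brown's identity. The gap is everything after that, which is the whole content of the theorem. You leave it as ``each piece yields a linear condition'', and the concrete tools you name do not produce the stated thresholds.

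\textbf{Type I.} Apply the second-derivative test to the $\ell$-sum of length $x^\theta/m$ with $f''\asymp Hx^{\gamma-2}m^2$, then sum over $h\sim x^{1-\gamma}$ and $m\sim N$. This gives $\ll x^{1/2+\theta-\gamma}N+x^{3/2-\gamma}$, so only $N\ll x^{\gamma-1/2}$. The paper instead uses the third-derivative test (Lemma~\ref{lem:high-drivative} with $q=1$), which exploits the short length better and gives $N\ll x^{2\gamma-4/3}$ for $\gamma>(8-3\theta)/6$. The thresholds $(20-3\theta)/18$ and $(17-3\theta)/15$ come from inserting exactly this $a=2\gamma-4/3$ into the condition $1-a<c/2$ of Lemma~\ref{lem:HB indentity}, with $c=\theta+2\gamma-2$ or $c=\theta+\gamma-1$. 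With your $a=\gamma-1/2$, the same condition would force $\gamma>(5-\theta)/4$ or $\gamma>(4-\theta)/3$, both exceeding $1$. So you would need either the stronger Type I bound or a finer combinatorial argument, and the proposal supplies neither.

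\textbf{Type II and the remaining thresholds.} These are also misattributed. $(1+\theta)/2$ is not a ``trivial Type I limit''. It is the condition in the A-process Type II bound (Proposition~\ref{lem:type2_2}) under which the optimisation in $Q$ makes $x^{5-\theta-4\gamma}$, rather than $x^{6-6\gamma}$, the lower endpoint. Your Type II plan (Cauchy--Schwarz, differencing with $Q\le y/(2N)$, second-derivative test) essentially is Proposition~\ref{lem:type2_2}. It yields $[x^{3-\theta-2\gamma},x^{\theta+4\gamma-4}]\cup[x^{5-\theta-4\gamma},x^{\theta+2\gamma-2}]$.

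The bounds $(10-4\theta)/7$ and $(15-6\theta)/10$, and the breakpoints $40/51$ and $10/11$, come from a second, genuinely different Type II estimate (Proposition~\ref{lem:type2_1}). It uses Heath-Brown's method:
\begin{enumerate}
\item partition the pairs $(n,h)$ into classes $\cL_q$ by the size of $hn^\gamma$;
\item apply Cauchy--Schwarz with $m$ outside;
\item apply the second-derivative test in $m$;
\item count quadruples with $|h_1n_1^\gamma-h_2n_2^\gamma|\le\delta$ by Fouvry--Iwaniec (Lemma~\ref{lemma:FoIw}).
\end{enumerate}
This gives $[x^{2-\theta-\gamma},x^{3\theta+5\gamma-7}]\cup[x^{8-3\theta-5\gamma},x^{\theta+\gamma-1}]$ for $\gamma>(9-4\theta)/6$. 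Requiring its two intervals to overlap gives $(15-6\theta)/10$. Requiring all four Type II intervals to chain gives $(10-4\theta)/7$ and $(9-2\theta)/8$. ``Different differencing choices'' within the A-process do not produce these.

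In the paper's setup, case three ($\theta>10/11$) rests entirely on this second estimate, because there $(1+\theta)/2$ exceeds the theorem's threshold. For example, at $\theta=0.95$ it is $0.975$ against $0.9433$.

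To turn the outline into a proof, you have to actually derive Type I and Type II ranges, and verify that the Heath-Brown decomposition is always covered under \eqref{eq:thm1}.
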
	

\begin{figure}[htbp]
	\centering
	\includegraphics[width=0.92\textwidth]{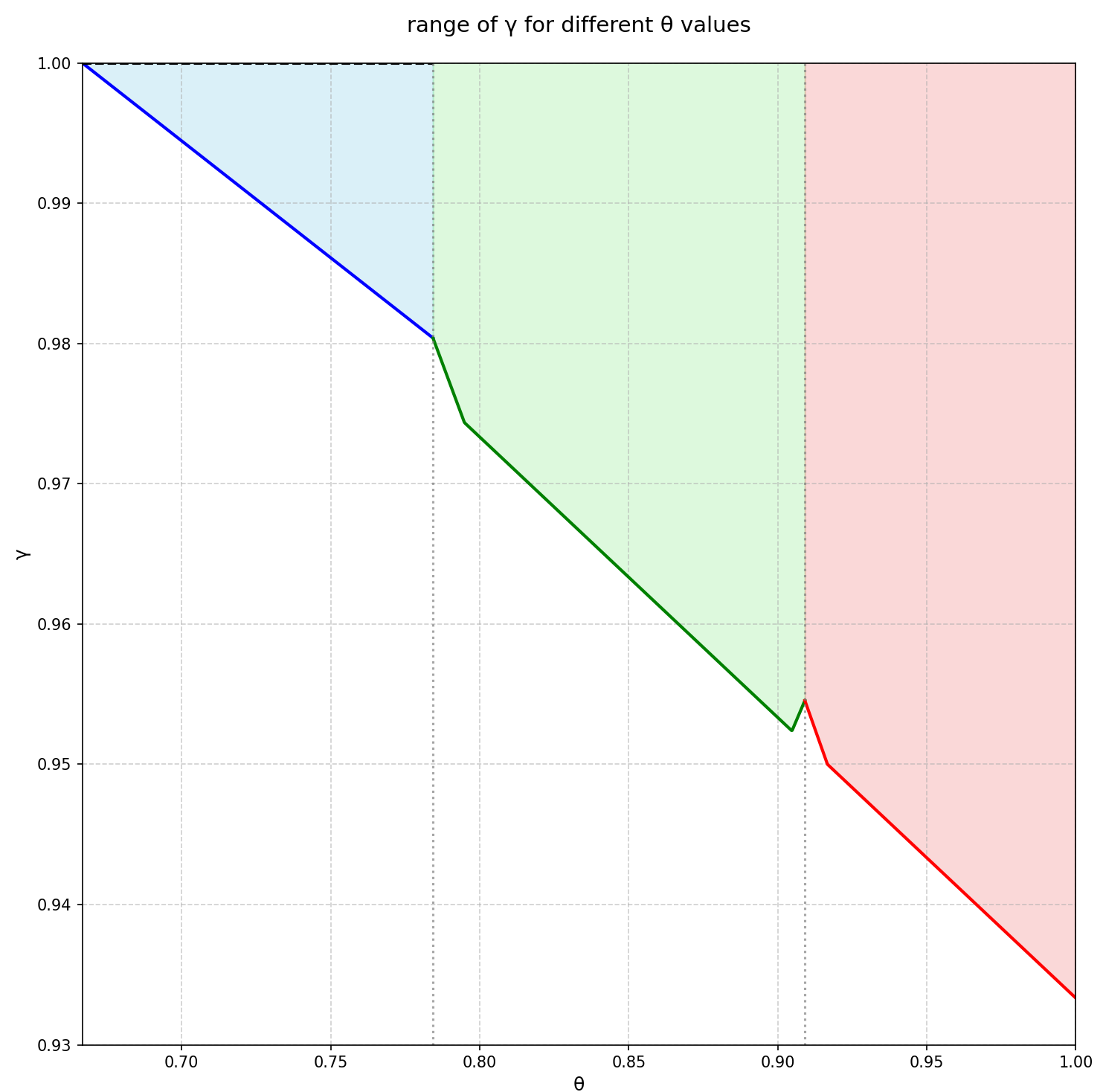}
	\caption{The admissible range of $\gamma$ in Theorem~\ref{thm1}}\label{fig:thm1}
\end{figure}

It is unusual that $\gamma$ increases with $\theta$ near $0.9$ in Figure \ref{fig:thm1}. This phenomenon is due to the term $(1+\theta)/2$ in Proposition~\ref{lem:type2_2}. Actually, we are able to prove an admissible type II sum when $\gamma < (1+\theta)/2$. However, we ignore this case since we focus on the situation when $\theta$ is small. This case appears only when $\theta > 0.8$, so to make our proposition brief we decide not to spend more effort on this more technical consideration.  

If we consider a lower bound instead of an asymptotic formula, the method of Harman sieve can be applied, so the range of $c$ is improved as the following theorem. 

\begin{theorem}\label{thm2}
For some fixed values of $\theta \in (2/3,1)$ and $\gamma>\gamma_0$ in Table~\ref{table of thm2} , we have
	\begin{align*}
		\pi_{c}(x+x^{\theta}) - \pi_{c}(x) \gg \frac{ x^{\theta+\gamma-1}}{\log x}.
	\end{align*}
\begin{table}[htbp]	
		\centering
		\caption{Ranges of $\gamma_0$ for fixed $\theta$}
		\begin{tabular}{cccccc}
			\toprule
			$\theta$ & $\gamma_0$ & $\theta$ & $\gamma_0$ & $\theta$ & $\gamma_0$\\
			\midrule
$0.67$    & $0.9975$    & $0.78$    & $0.9254$    & $0.89$    & $0.9235$ \\
$0.68$    & $0.9900$    & $0.79$    & $0.9221$    & $0.90$    & $0.9168$ \\
$0.69$    & $0.9825$    & $0.80$    & $0.9188$    & $0.91$    & $0.9101$ \\
$0.70$    & $0.9750$    & $0.81$    & $0.9155$    & $0.92$    & $0.9034$ \\
$0.71$    & $0.9675$    & $0.82$    & $0.9122$    & $0.93$    & $0.8967$ \\
$0.72$    & $0.9600$    & $0.83$    & $0.9150$    & $0.94$    & $0.8900$ \\
$0.73$    & $0.9525$    & $0.84$    & $0.9200$    & $0.95$    & $0.8833$ \\
$0.74$    & $0.9450$    & $0.85$    & $0.9250$    & $0.96$    & $0.8766$ \\
$0.75$    & $0.9375$    & $0.86$    & $0.9300$    & $0.97$    & $0.8699$ \\
$0.76$    & $0.9320$    & $0.87$    & $0.9350$    & $0.98$    & $0.8633$ \\
$0.77$    & $0.9287$    & $0.88$    & $0.9302$    & $0.99$    & $0.8566$ \\
			\bottomrule
		\end{tabular}
		\label{table of thm2}
	\end{table}
\end{theorem}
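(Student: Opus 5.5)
The plan is to follow the standard Harman-sieve strategy for Piatetski-Shapiro primes, adapted to the short interval $(x,x+x^\theta]$. Write $y=x^\theta$. An integer $m$ lies in $\cN_c$ exactly when $\fl{-m^\gamma}-\fl{-(m+1)^\gamma}=1$. So for any sequence $a_m$ supported on $(x,x+y]$,
\[
\sum_{m\in\cN_c} a_m=\sum_m a_m\big((m+1)^\gamma-m^\gamma\big)+\sum_m a_m\big(\psi(-(m+1)^\gamma)-\psi(-m^\gamma)\big),
\]
with $\psi(t)=t-\fl{t}-1/2$. The first sum equals $\gamma x^{\gamma-1}\sum_m a_m$ up to negligible error. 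I will expand $\psi$ by Vaaler's trigonometric polynomial with length $H$ slightly larger than $x^{1-\gamma}$. The second sum then becomes a combination of
\[
\sum_{1\le h\le H}\frac1h\Big|\sum_{x<m\le x+y} a_m\, \e\big(h m^\gamma\big)\big(1-\e(h((m+1)^\gamma-m^\gamma))\big)\Big| ,
\]
and after partial summation the factor $1-\e(\cdot)$ contributes $h x^{\gamma-1}$.

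The arithmetic input is therefore the pair of estimates that, I take it, the paper establishes later (Proposition~\ref{lem:type2_2} and its type I companion). For type I sums $\sum_{m\sim M}a_m\sum_{n:\,mn\in(x,x+y]}\e(h(mn)^\gamma)$ with $M\le x^{\alpha}$, and for type II sums $\sum_{m\sim M}\sum_{n}a_mb_n\e(h(mn)^\gamma)$ with $x^{\beta_1}\le M\le x^{\beta_2}$, they give the bound $\ll y x^{-\eta}$ after summing over $h$. The exponents $\alpha,\beta_1,\beta_2$ depend on $(\theta,\gamma)$: they come from van der Corput/exponent-pair estimates on intervals of length $y/M$, together with Weyl differencing and Cauchy--Schwarz in the type II case. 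These ranges are weaker than what Heath-Brown's identity needs for an asymptotic formula, which is why Theorem~\ref{thm1} requires larger $\gamma$.

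For each tabulated $\theta$, I would compute $(\alpha,\beta_1,\beta_2)$ at $\gamma=\gamma_0$. I would then apply Buchstab's identity to the sieve function
\[
S(\cA,z)=\#\{x<m\le x+y,\ m\in\cN_c,\ p\mid m\Rightarrow p\ge z\},
\]
in the manner of Harman's book and the Rivat--Wu argument. The goal is the decomposition $\pi_c(x+y)-\pi_c(x)=\sum_j S_j$, where each $S_j$ is either (i) asymptotically computable, because it decomposes into type I/II sums in the admissible ranges and so matches its comparison sum over all integers in $(x,x+y]$ times $\gamma x^{\gamma-1}$, or (ii) nonnegative and discarded. The comparison sums over ordinary integers in a short interval are then evaluated by the prime number theorem in short intervals. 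This requires $\theta>7/12$ (Huxley), or more precisely Harman's comparison principle, which needs only sieve asymptotics for intervals of length $x^\theta$ with $\theta>2/3$. That yields
\[
\pi_c(x+y)-\pi_c(x)\ge \gamma x^{\gamma-1}\frac{y}{\log x}\Big(1-\sum_{\text{discarded}}\int\omega(\cdot)\Big)+o\Big(\frac{x^{\theta+\gamma-1}}{\log x}\Big),
\]
and the discarded integrals are expressed via the Buchstab function.

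The main obstacle is the numerical step. For each $\theta$ in Table~\ref{table of thm2}, one must choose the Buchstab decomposition (how many iterations, and which regions to discard) and verify numerically that the total discarded proportion is below $1$ at $\gamma_0$. The non-monotone behaviour of $\gamma_0$ near $\theta\approx0.83$ reflects the type II range $(1+\theta)/2$ becoming binding. To handle it, I would first try the simplest two-step decomposition, with type II range $[x^{\beta_1},x^{\beta_2}]$ and type I for $p_1\cdots\le x^\alpha$. I would discard only four- and higher-dimensional regions whose products lie outside $[\beta_1,\beta_2]$, and compute the integrals numerically. If that fails for some $\theta$, I would add role reversals, applying Buchstab to the complementary variable. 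Finally I would check that the resulting loss stays below $1$ for $\gamma$ slightly above each tabulated $\gamma_0$.
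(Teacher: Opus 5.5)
Your overall architecture matches the paper's: Vaaler plus partial summation to replace $\cA$ by $\gamma x^{\gamma-1}$ times $\cB$ on type I/II sums over $mn\in(x,x+y]$, Buchstab's identity with the nonnegative leftovers discarded, Huxley for $\cB$, and numerical Buchstab integrals. The problem is the arithmetic input, which cannot reach the rows $\theta\ge 0.88$ of the table.

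You rely on a single type II bound, the Cauchy--Schwarz / Weyl-differencing / second-derivative estimate of Proposition~\ref{lem:type2_2}. You rightly identify its hypothesis $\gamma>(1+\theta)/2$ as what forces $\gamma_0=(1+\theta)/2$ for $0.83\le\theta\le 0.87$. For every $\theta\ge 0.88$, however, the table has $\gamma_0<(1+\theta)/2$ (e.g.\ $\theta=0.90$, $\gamma_0=0.9168<0.95$). So that bound gives no type II range there, and with type I information alone the Buchstab decomposition cannot beat the parity barrier.

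Dropping the restriction and re-optimising $Q$ does not rescue all these rows. For $\gamma<(1+\theta)/2$ the same inequality only gives $x^{6-6\gamma}\ll N\ll x^{\theta+2\gamma-2}$ (and its mirror image under $N\mapsto x/N$). This range is empty as soon as $\gamma<(8-\theta)/8$, e.g.\ at $\theta=0.99$, $\gamma_0=0.8566$.

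The missing idea is the paper's second, structurally different type II estimate, Proposition~\ref{lem:type2_1}, following Heath-Brown:
\begin{itemize}
\item split the pairs $(n,h)$ into classes $\cL_q$ according to the size of $hn^\gamma$;
\item apply Cauchy--Schwarz with $m$ kept outside;
\item bound the $m$-sum with phase $(h_1n_1^\gamma-h_2n_2^\gamma)m^\gamma$ by the second-derivative test or trivially;
\item count the quadruples with $|h_1n_1^\gamma-h_2n_2^\gamma|\le\delta$ by the Fouvry--Iwaniec spacing lemma (Lemma~\ref{lemma:FoIw}).
\end{itemize}
This gives type II ranges $[x^{2-\theta-\gamma},x^{3\theta+5\gamma-7}]\cup[x^{8-3\theta-5\gamma},x^{\theta+\gamma-1}]$ under $\gamma>(9-4\theta)/6$. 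For $\theta>0.87$ the paper reruns the Buchstab scheme with these ranges, type I level $x^{3\theta+5\gamma-7}$ and sieve level $z=x^{4\theta+6\gamma-9}$. Without this, or another type II bound valid well below $(1+\theta)/2$, your plan can at best establish the rows $\theta\le 0.87$.

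Note also that the admissible type II set is in general a union of two intervals symmetric under $N\mapsto x/N$, not a single $[x^{\beta_1},x^{\beta_2}]$. For example, at $\theta=0.80$, $\gamma=0.9188$ it is $[x^{0.3624},x^{0.4752}]\cup[x^{0.5248},x^{0.6376}]$. The discarded regions must be cut out accordingly.
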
	

Note that Theorem~\ref{thm2} gives a better range of $\gamma$ than Theorem~\ref{thm1} for any $2/3 < \theta < 1$, not only the values in Table~\ref{table of thm2}. The reason why we do not give a relation between $\theta$ and $\gamma$ as Theorem~\ref{thm1} is that the integral of \eqref{s6tos12} is hard to calculate if we do not fix $\theta$ and $\gamma$ when we apply Harman sieve. Check Table \ref{table of thm1 and thm2} for a comparison of admissible ranges for $\gamma$ with corresponding $\theta$ between Theorem \ref{thm1} and Theorem \ref{thm2}. We remark that it is possible to imcrease the range of $\gamma$ when $\gamma$ is big ($>0.87$) by a more careful calculation of exponential sums or Harman sieves, since it is not the main purpose of this article. 

\begin{table}[htbp]	
	
	\centering
	\caption{Ranges of $\gamma$ and $\theta$ for Theorem~\ref{thm1} and Theorem~\ref{thm2} }
	\begin{tabular}{cccccc}
		\toprule
		$\theta$ & $\gamma$ in Thm~\ref{thm1} & $\gamma$ in Thm~\ref{thm2} & $\theta$ & $\gamma$ in Thm~\ref{thm1} & $\gamma$ in Thm~\ref{thm2} \\
		\midrule
		$0.67$   & $0.9995$   & $0.9975$  & $0.84$ & $0.9654$ & $0.9200$\\
		$0.68$   & $0.9978$   & $0.9900$  & $0.85$ & $0.9634$ & $0.9250$\\
		$0.69$   & $0.9962$   & $0.9825$  & $0.86$ & $0.9614$ & $0.9300$\\
		$0.70$   & $0.9945$   & $0.9750$  & $0.87$ & $0.9594$ & $0.9350$\\
		$0.71$   & $0.9928$   & $0.9675$  & $0.88$ & $0.9574$ & $0.9302$\\
		$0.72$   & $0.9912$   & $0.9600$  & $0.89$ & $0.9554$ & $0.9235$\\
		$0.73$   & $0.9895$   & $0.9525$  & $0.90$ & $0.9534$ & $0.9168$\\
		$0.74$   & $0.9878$   & $0.9450$  & $0.91$ & $0.9540$ & $0.9101$\\
		$0.75$   & $0.9862$   & $0.9375$  & $0.92$ & $0.9494$ & $0.9034$\\
		$0.76$   & $0.9845$   & $0.9320$   & $0.93$ & $0.9474$ & $0.8967$\\
		$0.77$   & $0.9828$   & $0.9287$   & $0.94$ & $0.9454$ & $0.8900$\\
		$0.78$   & $0.9812$   & $0.9254$   & $0.95$ & $0.9434$ & $0.8833$\\
		$0.79$   & $0.9772$   & $0.9221$   & $0.96$ & $0.9414$ & $0.8766$\\
		$0.80$   & $0.9734$   & $0.9188$   & $0.97$ & $0.9394$ & $0.8699$\\
		$0.81$   & $0.9714$   & $0.9155$   & $0.98$ & $0.9374$ & $0.8633$\\
		$0.82$   & $0.9694$   & $0.9122$   & $0.99$ & $0.9354$ & $0.8566$\\
		$0.83$   & $0.9674$   & $0.9150$   &        &          &         \\
		\bottomrule
	\end{tabular}
	\label{table of thm1 and thm2}
\end{table}

A key part is the estimation of the following exponential sum
\begin{equation}
\label{eq:exp}
\sum_{h \sim H} \delta_h \sum_{n \sim N}\sum_{\substack{m \\ mn \in I}}  a_n b_m \e(h(mn)^\gamma),
\end{equation}
where $|\delta_h| \leqslant 1$ and $H \leqslant x^{1-\gamma+\epsilon}$. The interval $I = (x, x+y]$ for $y \defeq x^\theta$. If the coefficients $a_n$ and $b_m$ satisfy the conditions
$$|
a_n| \ll x^{\varepsilon}, \quad b_m = 1 \quad \text{or} \quad b_m = \log m,
$$
the sum is a type I sum and denoted by $S_I$; if they satisfy the conditions
$$|a_n| \ll x^{\varepsilon}, \quad |b_m| \ll x^{\varepsilon},$$
the sum is a type II sum and denoted by $S_{II}$ in this paper.

We mention that the exponential sum of the form
\begin{equation}
\label{eq:expn}
\sum_{h \sim H} \delta_h \sum_{n \sim N}\sum_{m \sim M}  a_n b_m \e(h(mn)^\gamma)
\end{equation}
was well investigated during the research of Piatetski-Shapiro primes; see Robert and Sargos \cite{RS2006} for a type I sum and Heath-Brown \cite{HB2} for a type II sum. However, to bound the exponential sum as \eqref{eq:exp}, we need to employ the short interval information $mn \in I$, so methods for \eqref{eq:expn} may not provide the best bound here. 
	
For type I sums (Proposition \ref{lem:type1}), a normal treatment is to consider the cancellation of $h,m,n$ via a double large sieve technique. However, we start by estimating the inner sum of $m$ by Lemma~\ref{lem:high-drivative}, since the condition of the short interval entangles the relationship between $m$ and $n$, which makes it hard to use other methods. Another reason is that in this question we try to make $\theta$ as small as possible instead of enlarging the range of $\gamma$ which leads that other normal methods do not work well. We mention that if $\theta>0.87$, combining double large sieve techique and our methods may work better, but as mentioned before, we do not focus on this case.  

For type II sums, we apply two different methods, which lead to Proposition \ref{lem:type2_2} and Proposition \ref{lem:type2_1}. The first method of type II sum is by switching it into an estimation of a type I sum via the Cauchy-Schwarz inequality and A-process, which fits the best to short interval information. To grab more type II information for sieves, we also apply Heath-Brown's idea in \cite{HB2} with counting rational points as a second method. 

We prove the upper bounds of exponential sums in Section \ref{sec:Bounds on exponential sums}. By the bounds, we provide an asymptotic formula for Piatetski-Shapiro primes in short intervals in Section \ref{sec:An asymptotic result}. The lower bound result (Theorem \ref{thm2}) and the set-up of Harman sieve are described in Section \ref{sec:lower bound}. 

\section{Preliminaries}
	
\subsection{Notations}
	
	We denote by $\fl{t}$ and $\{t\}$ the integer part and the fractional part of $t$, respectively. As is customary, we put $\e(t)\defeq e^{2\pi it}$. We make considerable use of the sawtooth function defined by
	$$
	\psi(t) \defeq t-\fl{t}-\frac{1}{2}=\{t\}-\frac{1}{2}\qquad(t\in\RR).
	$$
	The letter $p$ always denotes a prime. The interval $I$ always denotes $(x,x+y]$ and $y$ is defined as $x^{\theta}$ where $1/2<\theta \leqslant 1$. For the Piatetski-Shapiro sequence $(\fl{n^c})_{n=1}^\infty$, we denote $\gamma \defeq c^{-1}$. 
	We use notation of the form $m\sim M$ as an abbreviation for $M< m\le 2M$. $\eps$ is always a sufficiently small positive number. For an arbitrary set $\cS$, we use $\ind{\cS}$ to denote its indicator function:
	$$
	\ind{\cS}(n)\defeq\begin{cases}
		1&\quad\hbox{if $n\in\cS$,}\\
		0&\quad\hbox{if $n\not\in\cS$.}\\
	\end{cases}
	$$
	
	Throughout the paper, implied constants in symbols $O$, $\ll$ and $\gg$ may depend (where obvious) on the parameters $c, \eps$ but are absolute otherwise. For given functions $F$ and $G$, the notations $F\ll G$, $G\gg F$ and $F=O(G)$ are all equivalent to the statement that the inequality $|F|\le C|G|$ holds with some constant $C>0$. $F \asymp G$ means that $F \ll G \ll F$. 
	
\subsection{Technical lemmas}
	
	\begin{lemma}
		\label{lem:Vaaler}
		For any $H\ge 1$, there exist numbers $a_h,b_h$ such that
		$$
		\bigg|\psi(t)-\sum_{0<|h|\le H}a_h\,\e(th)\bigg|
		\le\sum_{|h|\le H}b_h\,\e(th),\quad
		a_h\ll 1/|h|,\quad b_h\ll 1/H.
		$$
	\end{lemma}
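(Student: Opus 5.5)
This is Vaaler's lemma, and the plan is to construct the approximant explicitly from Vaaler's extremal trigonometric polynomial rather than prove anything new. Write $N=\lfloor H\rfloor$; we may assume $N\ge1$ and take $a_h=b_h=0$ for $N<|h|\le H$. Recall the Beurling--Selberg--Vaaler polynomial
\[
\psi^*(t)\defeq -\sum_{0<|h|\le N}\frac{\phi\!\left(\frac{h}{N+1}\right)}{2\pi i h}\,\e(th),
\]
where $\phi(u)\defeq \pi u(1-|u|)\cot(\pi u)+|u|$ for $0<|u|<1$. Since $0\le\phi\le1$ on this range, we may set $a_h\defeq -\phi(h/(N+1))/(2\pi i h)$, and then $a_h\ll1/|h|$ follows at once.

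The core input is Vaaler's inequality
\[
|\psi(t)-\psi^*(t)|\le \frac{1}{2N+2}\sum_{|h|\le N}\left(1-\frac{|h|}{N+1}\right)\e(th),
\]
whose right-hand side is $(2N+2)^{-1}$ times the Fejér kernel. It is therefore real and nonnegative, which makes the stated inequality meaningful. Accordingly we set $b_h\defeq (1-|h|/(N+1))/(2N+2)$, which gives $b_h\ll 1/(N+1)\ll1/H$.

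To justify Vaaler's inequality, I would follow his argument on the real line. First one majorizes and minorizes $\operatorname{sgn}(x)$ by entire functions of exponential type $2\pi(N+1)$. These are built by Hermite-type interpolation at the integers, using Beurling's function
\[
B(z)=\left(\frac{\sin\pi z}{\pi}\right)^2\left(\sum_{n\ge0}(z-n)^{-2}-\sum_{n<0}(z-n)^{-2}+\frac{2}{z}\right).
\]
One then periodizes and applies Poisson summation to pass from $\operatorname{sgn}$ to the sawtooth $\psi$. This yields trigonometric polynomials $\psi^*\pm$(Fejér kernel$/(2N+2)$) of degree $\le N$ that bound $\psi$ from above and below, and the inequality follows.

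The main obstacle is verifying the extremal property $B(x)\ge\operatorname{sgn}(x)$ together with the exact Fourier coefficients that produce $\phi$. This is delicate, so in practice I would simply cite Vaaler (Bull.\ AMS 1985) or Graham--Kolesnik, Theorem A.6. The only remaining point is the bookkeeping above, which converts that theorem into the stated form with $a_h\ll1/|h|$ and $b_h\ll1/H$.
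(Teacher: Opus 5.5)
Your proposal is correct and takes the same approach as the paper, which simply cites Vaaler for this lemma. Your explicit $\psi^*$, the Fejér-kernel majorant with $b_h=(1-|h|/(N+1))/(2N+2)$, and the reduction to $N=\lfloor H\rfloor$ for non-integer $H$ are just the bookkeeping needed to read off the stated bounds $a_h\ll 1/|h|$ and $b_h\ll 1/H$.
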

	
	\begin{proof}
		See \cite{Vaal} by Vaaler.
	\end{proof}
	
	\begin{lemma}
		\label{lem:PS}
		A natural number $m$ has the form $\fl{n^c}$ if and only if $\mathbf{1}_{\cN_c}(m) = 1$, where
		$\mathbf{1}_{\cN_c}(m) \defeq \fl{-m^\gamma} - \fl{-(m+1)^\gamma}$.  Moreover,
		$$
		\mathbf{1}_{\cN_c}(m)=\gamma m^{\gamma-1}+ \psi(-(m+1)^\gamma) - \psi(-m^\gamma)+O(m^{\gamma-2}).
		$$
	\end{lemma}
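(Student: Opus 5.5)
The plan is to work directly with floors: first establish the characterization of $\cN_c$, then expand the floor difference.

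For the characterization, fix $m\ge1$. Since $c>1$, the map $n\mapsto n^c$ is strictly increasing and consecutive values differ by more than $1$. Hence $m=\fl{n^c}$ for some $n$ if and only if $m\le n^c<m+1$, which is equivalent to
\[
m^\gamma\le n<(m+1)^\gamma .
\]
So $m\in\cN_c$ exactly when the half-open interval $[m^\gamma,(m+1)^\gamma)$ contains an integer. Because $\gamma<1$, this interval has length $(m+1)^\gamma-m^\gamma\le\gamma m^{\gamma-1}<1$, so it contains at most one integer.

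Next, using $\fl{-t}=-\lceil t\rceil$, we get
\[
\fl{-m^\gamma}-\fl{-(m+1)^\gamma}=\lceil (m+1)^\gamma\rceil-\lceil m^\gamma\rceil .
\]
This difference counts the integers in $[m^\gamma,(m+1)^\gamma)$. Combined with the previous step, it lies in $\{0,1\}$ and equals $1$ precisely when $m\in\cN_c$. This proves the first claim.

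For the asymptotic formula, write $\fl{u}=u-\psi(u)-\tfrac12$ with $u=-m^\gamma$ and $u=-(m+1)^\gamma$. The constants $\tfrac12$ cancel, leaving
\[
\mathbf{1}_{\cN_c}(m)=(m+1)^\gamma-m^\gamma+\psi(-(m+1)^\gamma)-\psi(-m^\gamma).
\]
By Taylor's theorem (or the mean value theorem applied to the derivative),
\[
(m+1)^\gamma-m^\gamma=\gamma m^{\gamma-1}+O(m^{\gamma-2}),
\]
since the second derivative of $t^\gamma$ is $\gamma(\gamma-1)t^{\gamma-2}\ll m^{\gamma-2}$ on $[m,m+1]$. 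Substituting this gives the stated formula.

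The argument is elementary, and none of the steps is a genuine obstacle. The only point needing care is the half-open endpoint convention in the interval $[m^\gamma,(m+1)^\gamma)$, which must match the ceiling identity. It is handled by the exact equivalence $\fl{n^c}=m\iff m\le n^c<m+1$, which holds for integer $n$ without exception.
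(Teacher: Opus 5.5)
Your proof is correct and follows essentially the same route as the paper: rewrite $\fl{n^c}=m$ as $m^\gamma\le n<(m+1)^\gamma$, count the integers in that interval with the floor difference, then expand using $\fl{u}=u-\psi(u)-\tfrac12$ and Taylor's theorem. Counting via ceilings instead of the paper's reflected interval is only cosmetic. Your explicit check that $(m+1)^\gamma-m^\gamma<1$, so that the count lies in $\{0,1\}$ as the equivalence with $\mathbf{1}_{\cN_c}(m)=1$ requires, fills a point the paper leaves implicit.
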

	
	\begin{proof}
		The equality $m = \lfloor n^c \rfloor$ holds precisely when $m \leqslant n^c < m + 1$, or equivalently, when $-(m + 1)^\gamma \leqslant -n < -m^\gamma$. Consequently,
		\begin{align*}
			\mathbf{1}_{\cN_c}(m) &=\lfloor -m^\gamma \rfloor - \lfloor -(m + 1)^\gamma \rfloor=(m + 1)^\gamma - m^\gamma +\psi(-(m + 1)^\gamma) - \psi(-m^\gamma) \\
			&=\gamma m^{\gamma-1}+ \psi(-(m+1)^\gamma) - \psi(-m^\gamma)+O(m^{\gamma-2}).
		\end{align*}
	\end{proof}
The following lemma is the famous Weyl-van der Corput inequality, also called the A-process; see \cite[Lemma 2.5]{GraKol}. 
    \begin{lemma}
		\label{lem:A}
		Suppose $f(n)$ is a complex valued function and $I$ is an interval such that $f(n) = 0$ if $n \notin I$. If $H$ is a positive integer then
		$$
		|\sum_{n \in I} f(n)|^2 \le \frac{|I| + H}{H} \sum_{|h|< H} \big(1 - \frac{h}{H} \big) \sum_{n \in I} f(n) \overline{f(n-h)}.  
		$$
	\end{lemma}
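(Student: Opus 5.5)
We prove Lemma~\ref{lem:A} by expanding the square after averaging over shifts, the classical Weyl--van der Corput argument. Since $f$ vanishes off $I$, we may regard $f$ as a function on all of $\mathbb{Z}$. For each integer $r$ with $0\le r<H$ we have $\sum_{n} f(n)=\sum_n f(n-r)$. Averaging this identity over $r$ gives
$$
H\sum_{n} f(n)=\sum_{n}\sum_{0\le r<H} f(n-r).
$$
The inner sum is nonzero only when $n-r\in I$ for some $0\le r<H$, that is, for $n$ in a set $J$ of at most $|I|+H-1$ integers. (Here $|I|$ means the number of integers in $I$, or its length, up to the harmless $+1$ already absorbed in the constant.)

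Next we apply Cauchy--Schwarz over $n\in J$:
$$
H^2\Big|\sum_n f(n)\Big|^2\le (|I|+H)\sum_{n}\Big|\sum_{0\le r<H} f(n-r)\Big|^2 .
$$
Expanding the square gives
$$
\sum_n\sum_{0\le r,s<H} f(n-r)\overline{f(n-s)}.
$$
We substitute $m=n-r$ and $h=s-r$, so $|h|<H$. For fixed $h$, the number of pairs $(r,s)$ with $s-r=h$ and $0\le r,s<H$ is $H-|h|$. Hence the double sum equals
$$
\sum_{|h|<H}(H-|h|)\sum_m f(m)\overline{f(m-h)}.
$$
Dividing by $H^2$ yields the stated inequality, with weight $(1-|h|/H)$. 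This also covers the statement's $1-h/H$: the terms for $h$ and $-h$ are complex conjugates, so the right-hand side is real and the weighting is symmetric in $h$. Finally, $m$ ranges over $I$ automatically, since $f(m)=0$ otherwise.

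There is no real obstacle. The only points needing care are counting the length of $J$, which gives the factor $|I|+H$, and counting the pairs $(r,s)$ with prescribed difference, which gives the triangular weight $H-|h|$. Alternatively, one could simply cite \cite[Lemma 2.5]{GraKol}.
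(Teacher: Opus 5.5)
Your argument is the standard Weyl--van der Corput proof, which is all the paper relies on: it gives no proof of its own and simply cites \cite[Lemma~2.5]{GraKol}. The main steps are correct: averaging over shifts, Cauchy--Schwarz over $J$ with $\#J\le |I|+H$ (valid whether $|I|$ is the length or the number of integers), and counting $H-|h|$ pairs $(r,s)$. Together they prove the inequality with weight $1-|h|/H$.

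The one wrong step is your closing claim that this also covers the literal weight $1-h/H$. That weight is not symmetric in $h$. Write $c_h=\sum_n f(n)\overline{f(n-h)}$, so that $c_{-h}=\overline{c_h}$. Then
\[
\sum_{|h|<H}\Bigl(1-\frac{h}{H}\Bigr)c_h=\Bigl(c_0+2\sum_{h=1}^{H-1}\operatorname{Re}c_h\Bigr)-\frac{2i}{H}\sum_{h=1}^{H-1}h\operatorname{Im}c_h .
\]
This is not real in general. Its real part carries flat weights, not the Fej\'er weights $1-|h|/H$, and with flat weights the inequality is false. For instance, take $H=2$ and $f=(1,-1,1)$ on three consecutive integers of $I$. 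The left side is $1$, but $c_{-1}+c_0+c_1=-2+3-2=-1$, so the right side is negative. The statement as printed is therefore a misprint for $1-|h|/H$, which is how the paper actually applies it in the proof of Proposition~\ref{lem:type2_2}. You should say that, rather than argue that the two forms agree.
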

	
	\begin{lemma}\label{lem:2-drivative}
Suppose that $ f $ is a real valued function with two continuous derivatives on $ I $. Suppose also that there is some $ \lambda > 0 $ and some $ \alpha \geqslant 1 $ such that  
$$
\lambda \leqslant |f''(x)| \leqslant \alpha\lambda
$$ 
on $ I $. Then  
$$
\sum_{n \in I} e(f(n)) \ll \alpha |I|\lambda^{\frac{1}{2}} + \lambda^{-\frac{1}{2}}.
$$
	\end{lemma}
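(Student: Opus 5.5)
We argue by the classical route: reduce to the first derivative test (Kusmin--Landau) on most of $I$ and use the trivial bound on the remaining short pieces. First, since $f''$ is continuous and $|f''|\ge\lambda>0$ on $I$, $f''$ has constant sign. Replacing $f$ by $-f$ (which conjugates the sum) we may assume $f''>0$, so $f'$ is strictly increasing on $I$. The range of $f'$ over $I$ is then an interval $J$ of length at most $\alpha\lambda|I|$.

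We dispose of trivial cases first. We take $|I|\ge 1$, which is the situation in which the lemma is applied; if $|I|<1$ the sum has at most one term. If $\lambda\ge 1/4$, the trivial bound $|I|+1\ll |I|\ll \alpha|I|\lambda^{1/2}$ already suffices. So assume $\lambda<1/4$.

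Now fix a parameter $0<\delta<1/2$ and cover $J$ as follows. For each integer $k$ with $[k-\delta,k+\delta]\cap J\ne\emptyset$, let $I_k=\{t\in I: |f'(t)-k|\le\delta\}$. The number of such $k$ is at most $\alpha\lambda|I|+2$.
\begin{itemize}
\item[(i)] Each $I_k$ is an interval, by monotonicity of $f'$. By the mean value theorem and $f''\ge\lambda$, it has length at most $2\delta/\lambda$, so it contributes at most $2\delta/\lambda+1$ trivially.
\item[(ii)] The complement $I\setminus\bigcup_k I_k$ consists of at most $\alpha\lambda|I|+3$ intervals. On each of them $f'$ is monotone and satisfies $\|f'\|\ge\delta$; in fact $f'$ lies between two consecutive integers shifted by $\delta$. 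The Kusmin--Landau first derivative test therefore bounds each of these sums by $\ll 1/\delta$.
\end{itemize}
Summing the two contributions gives
$$
\sum_{n\in I}e(f(n))\ll(\alpha\lambda|I|+1)\Big(\frac{\delta}{\lambda}+1+\frac1\delta\Big).
$$

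Choose $\delta=\lambda^{1/2}<1/2$. Then $\delta/\lambda=1/\delta=\lambda^{-1/2}\ge 1$, and the bound becomes $\ll \alpha|I|\lambda^{1/2}+\lambda^{-1/2}$. This uses $\alpha\lambda|I|\cdot\lambda^{-1/2}=\alpha|I|\lambda^{1/2}$ and $1\cdot\lambda^{-1/2}=\lambda^{-1/2}$, which is exactly the claim.

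The only genuinely nontrivial input is the Kusmin--Landau inequality $\sum_{a<n\le b}e(g(n))\ll 1/\delta$ for $g'$ monotone with $\|g'\|\ge\delta$. We quote it as standard (e.g.\ \cite{GraKol}); it is proved by partial summation applied to $e(g(n))=\frac{e(g(n+1))-e(g(n))}{e(g(n+1)-g(n))-1}$. The main point of care is the bookkeeping: counting the subintervals and absorbing the stray $+1$ terms via the reductions $|I|\ge1$ and $\lambda<1/4$.
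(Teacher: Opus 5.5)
Your argument is correct. It is essentially the standard proof of \cite[Theorem~2.2]{GraKol}, which the paper cites instead of giving a proof: split $I$ according to whether $\|f'\|\le\delta$, bound those pieces trivially and the rest by Kusmin--Landau, then take $\delta=\lambda^{1/2}$. One remark: the restriction $|I|\ge 1$ is genuinely needed, not just convenient, since for $|I|<1$ and $\lambda$ large a single term of modulus $1$ is not $\ll \alpha|I|\lambda^{1/2}+\lambda^{-1/2}$, so the statement tacitly assumes it.
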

	
	\begin{proof}
		See \cite[Theorem~2.2]{GraKol}.
	\end{proof}
	
	\begin{lemma}\label{lem:high-drivative}
Let $ q $ be a positive integer. Suppose that $ f $ is a real valued function with $ q + 2 $ continuous derivatives on $ I $. Suppose also that for some $ \lambda > 0 $ and for some $ \alpha \geqslant 1 $,
$$
\lambda \leqslant \left| f^{(q+2)}(x) \right| \leqslant \alpha\lambda
$$
on $ I $. Let $ Q \defeq 2^q $. Then
$$
\sum_{n \in I} e(f(n)) \ll |I|(\alpha^2\lambda)^{1/(4Q-2)} + |I|^{1-1/2Q}\alpha^{1/2Q} + |I|^{1-2/Q+1/Q^2}\lambda^{-1/2Q}.
$$
The implied constant is absolute.
	\end{lemma}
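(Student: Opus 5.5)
This is the classical $q$-fold van der Corput estimate (Graham--Kolesnik, Theorem 2.8). I would prove it by induction on $q$, using the Weyl--van der Corput inequality of Lemma~\ref{lem:A} to lower the order of the derivative by one at each step. The base case is Lemma~\ref{lem:2-drivative}.

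\emph{Base case $q=0$.} Here $Q=1$ and the hypothesis concerns $f''$. The right-hand side becomes
\[
|I|(\alpha^2\lambda)^{1/2}+|I|^{1/2}\alpha^{1/2}+|I|^{0}\lambda^{-1/2}.
\]
This dominates $\alpha|I|\lambda^{1/2}+\lambda^{-1/2}$, so the claim follows directly from Lemma~\ref{lem:2-drivative}.

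\emph{Inductive step.} Assume the bound for $q-1$ and let $f^{(q+2)}$ lie in $[\lambda,\alpha\lambda]$.
\begin{enumerate}
\item Apply Lemma~\ref{lem:A} with a parameter $1\le H\le |I|$. This gives
\[
\Bigl|\sum_{n\in I}e(f(n))\Bigr|^2\ll \frac{|I|^2}{H}+\frac{|I|}{H}\sum_{1\le h<H}\Bigl|\sum_{n\in I_h}e\bigl(f(n)-f(n-h)\bigr)\Bigr|,
\]
where the $h=0$ term is absorbed into $|I|^2/H$.
\item Set $g_h(n)=f(n)-f(n-h)$. By the mean value theorem,
\[
g_h^{(q+1)}(n)=h f^{(q+2)}(\xi),
\]
so $h\lambda\le |g_h^{(q+1)}|\le \alpha h\lambda$. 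This is exactly the hypothesis at level $q-1$, with $\lambda$ replaced by $h\lambda$ and $Q$ replaced by $Q/2$.
\item Apply the induction hypothesis to each inner sum. This bounds it by
\[
|I|(\alpha^2h\lambda)^{1/(2Q-2)}+|I|^{1-1/Q}\alpha^{1/Q}+|I|^{1-4/Q+4/Q^2}(h\lambda)^{-1/Q}.
\]
\item Sum over $h<H$, using $\sum_{h<H} h^{\beta}\ll H^{1+\beta}$ for $\beta>-1$. This yields
\[
|S|^2\ll \frac{|I|^2}{H}+|I|^2(\alpha^2H\lambda)^{1/(2Q-2)}+|I|^{2-1/Q}\alpha^{1/Q}+|I|^{2-4/Q+4/Q^2}(H\lambda)^{-1/Q}.
\]
\end{enumerate}

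\emph{Choice of $H$, which I expect to be the main obstacle.} Balance $|I|^2/H$ against the first term by taking
\[
H\asymp(\alpha^2\lambda)^{-1/(2Q-1)}.
\]
Then $|I|^2/H=|I|^2(\alpha^2\lambda)^{1/(2Q-1)}$, and taking square roots gives the first term $|I|(\alpha^2\lambda)^{1/(4Q-2)}$. The third term gives $|I|^{1-1/2Q}\alpha^{1/2Q}$ directly.

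For the fourth term, $H^{-1/Q}=(\alpha^2\lambda)^{1/(Q(2Q-1))}$. After the square root it should be absorbed using $\alpha\ge1$ together with the first term, or else produce $|I|^{1-2/Q+1/Q^2}\lambda^{-1/2Q}$. Note that
\[
1-\tfrac{2}{Q}+\tfrac{2}{Q^2}\ \text{(from the square root)}
\]
differs from the target exponent $1-\tfrac{2}{Q}+\tfrac{1}{Q^2}$. Closing that gap requires using $H\le|I|$ in the right direction.

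The constraints $1\le H\le |I|$ need separate handling. If $H>|I|$, then $(\alpha^2\lambda)^{1/(4Q-2)}\gtrsim |I|^{-1/2}$, and the result is checked directly from the trivial bound $|I|$ and the other terms. If $H<1$, then $\alpha^2\lambda\ge1$ and the first term exceeds $|I|$, so the bound is trivial.

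These exponent checks are tedious. If they fail to close, I will instead follow the exact bookkeeping of Graham--Kolesnik's Theorem 2.8, whose statement this lemma reproduces verbatim, and cite it.
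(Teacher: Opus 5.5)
Your steps 1--4 are correct: the A-process via Lemma~\ref{lem:A}, the mean value theorem for $g_h$, the induction hypothesis with $Q/2$ and $h\lambda$, and the summation over $h$. The base case $q=0$ from Lemma~\ref{lem:2-drivative} is also fine. The paper gives no argument of its own and simply cites Graham--Kolesnik, Theorem~2.8, so your fallback coincides with it.

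The genuine gap is the choice of $H$. Taking $H\asymp(\alpha^2\lambda)^{-1/(2Q-1)}$ balances $|I|^2/H$ against the first term only. The fourth term $|I|^{2-4/Q+4/Q^2}(H\lambda)^{-1/Q}$ is then not controlled, whatever use you make of $1\le H\le |I|$. For instance, take $Q=4$, $\alpha=1$, $\lambda=|I|^{-5/2}$. Your choice is $H=|I|^{5/14}\in[1,|I|]$, and the fourth term equals $|I|^{25/14}$. So you only get $|S|\ll |I|^{25/28}$, whereas the three target terms are $|I|^{23/28}$, $|I|^{7/8}$, $|I|^{7/8}$.

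The missing step is to choose $H$ by Lemma~\ref{lemmea: balance} with $E_1=0$ and $E_2=|I|$. Balance the increasing term $|I|^2(\alpha^2\lambda)^{1/(2Q-2)}H^{1/(2Q-2)}$ against \emph{both} decreasing terms $|I|^2H^{-1}$ and $|I|^{2-4/Q+4/Q^2}\lambda^{-1/Q}H^{-1/Q}$.
\begin{itemize}
\item The first pairing gives $|I|^2(\alpha^2\lambda)^{1/(2Q-1)}$.
\item The second pairing gives $|I|^{(6Q-8+4/Q)/(3Q-2)}\alpha^{2/(3Q-2)}\le |I|^{2-1/Q}\alpha^{1/Q}$, since $Q\ge 2$ and $\alpha\ge 1$.
\item The endpoint terms are $|I|$ and $|I|^{2-5/Q+4/Q^2}\lambda^{-1/Q}\le |I|^{2-4/Q+2/Q^2}\lambda^{-1/Q}$, again because $Q\ge 2$.
\end{itemize}
If the resulting $H$ is below $1$, then $|I|^2/H>|I|^2\ge |S|^2$ and the claim is trivial; otherwise round $H$ to an integer. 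Taking square roots, together with the $H$-independent term $|I|^{2-1/Q}\alpha^{1/Q}$, yields exactly the three stated terms, so the induction closes. In the example above, the right choice is $H=|I|^{7/10}$, balancing the first and fourth terms, which gives $|S|\ll |I|^{7/8}$ as required.
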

	
	\begin{proof}
		See \cite[Theorem~2.8]{GraKol}.
	\end{proof}
	
	\begin{lemma} \label{lemmea: balance} Let
		$$
		L(E)=\sum_{1\leqslant i\leqslant u}A_iE^{a_i}+\sum_{1\leqslant j\leqslant v}B_jE^{-b_j},
		$$
		where $A_i,B_j,a_i$ and $b_j$ are positive. Let $0\leqslant E_1\leqslant E_2$. Then there is
		some $E\in(E_1,E_2]$ such that
		$$
		L(E)\ll \sum_{1\leqslant i\leqslant u}\sum_{1\leqslant j\leqslant v}\left(A_i^{b_j}B_j^{a_i}\right)^{1/(a_i+b_j)}
		+\sum_{1\leqslant i\leqslant u}A_iE_1^{a_i}+\sum_{1\leqslant j\leqslant v}B_jE_2^{-b_j},
		$$
		where the implied constant only depends on $u$ and $v$.
	\end{lemma}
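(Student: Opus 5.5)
This is the standard balancing lemma (Graham--Kolesnik, Lemma~2.4). I would prove it by comparing the increasing part and the decreasing part of $L$. Write
$$
\mathcal{A}(E)\defeq\sum_{i\le u}A_iE^{a_i},\qquad \mathcal{B}(E)\defeq\sum_{j\le v}B_jE^{-b_j}.
$$
On $(0,\infty)$ both functions are continuous, $\mathcal{A}$ is strictly increasing and $\mathcal{B}$ is strictly decreasing. Hence $\mathcal{A}-\mathcal{B}$ is strictly increasing and continuous. I will choose $E$ according to three cases.

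\emph{Case 1: $\mathcal{A}(E_2)\le \mathcal{B}(E_2)$.} Take $E=E_2$. Then $L(E)\le 2\mathcal{B}(E_2)=2\sum_j B_jE_2^{-b_j}$, which is one of the allowed terms.

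\emph{Case 2: $E_1>0$ and $\mathcal{A}(E_1)\ge \mathcal{B}(E_1)$.} Since $E$ must lie in the half-open interval $(E_1,E_2]$, I cannot take $E=E_1$ directly. Instead I use continuity to choose $E\in(E_1,E_2]$ so close to $E_1$ that $L(E)\le 2L(E_1)$. Since $L(E_1)\le 2\mathcal{A}(E_1)$, this gives $L(E)\le 4\sum_iA_iE_1^{a_i}$. If $E_1=0$ this case cannot occur, because $\mathcal{B}(E)\to\infty$ as $E\to0^+$.

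\emph{Case 3: the remaining case.} Here $\mathcal{A}-\mathcal{B}$ is negative at (or near) $E_1$ and positive at $E_2$. By the intermediate value theorem there is $E_0\in(E_1,E_2]$ with $\mathcal{A}(E_0)=\mathcal{B}(E_0)$. Take $E=E_0$, so that $L(E_0)=2\mathcal{A}(E_0)=2\mathcal{B}(E_0)$.

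The heart of the matter is bounding $L(E_0)$ in Case 3 by the mixed terms; this is the only step needing care.
\begin{itemize}
\item Choose $i$ maximizing $X\defeq A_iE_0^{a_i}$ and $j$ maximizing $Y\defeq B_jE_0^{-b_j}$.
\item Then $X\le \mathcal{A}(E_0)\le uX$ and $Y\le\mathcal{B}(E_0)\le vY$.
\item Since $\mathcal{A}(E_0)=\mathcal{B}(E_0)$, this gives $X\le vY$ and $Y\le uX$, so $X\asymp Y$ with constants depending only on $u,v$.
\item The powers of $E_0$ cancel in the product: $X^{b_j}Y^{a_i}=A_i^{b_j}B_j^{a_i}$.
\item Using $Y\ge X/v$, we get $X^{a_i+b_j}\le v^{a_i}X^{b_j}Y^{a_i}=v^{a_i}A_i^{b_j}B_j^{a_i}$.
\end{itemize}
Therefore
$$
L(E_0)\le 2uX\ll \big(A_i^{b_j}B_j^{a_i}\big)^{1/(a_i+b_j)}.
$$

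In every case $L(E)$ is bounded by one of the three groups of terms in the statement, which proves the lemma. The main obstacle is only the bookkeeping in Case 3: one must pass from the equality of the two full sums to a single dominant pair $(i,j)$, and it is exactly there that the implied constant comes to depend on $u$ and $v$ (and on the exponents $a_i$).
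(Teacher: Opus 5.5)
Your proof is correct and is essentially the standard Graham--Kolesnik balancing argument that the paper cites without proof. Your one adaptation, the continuity perturbation in Case~2 to land inside the half-open interval $(E_1,E_2]$, is sound; like your Case~1 and like the statement itself, it tacitly needs $E_1<E_2$, since otherwise $(E_1,E_2]$ is empty.

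One small correction to your closing remark. Because $0<a_i/(a_i+b_j)<1$ and $v\ge 1$, your Case~3 bound gives $L(E_0)\le 2uX\le 2uv\,\bigl(A_i^{b_j}B_j^{a_i}\bigr)^{1/(a_i+b_j)}$. So the implied constant depends only on $u$ and $v$, exactly as the lemma states, and not on the exponents.
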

	
	\begin{proof}
		See \cite[Lemma~2.4]{GraKol}.
	\end{proof}
	
	\begin{lemma}\label{lemma:FoIw}
		Let $ \alpha \beta \neq 0, \, \Delta > 0, \, M \geqslant 1 $ and $ N \geqslant 1 $. 
		Let $ \mathcal{A}(M, N; \Delta) $ be the number of quadruples $(m, \tilde{m}, n, \tilde{n})$ such that
		$$
		\left| \left( \frac{\tilde{m}}{m} \right)^\alpha - \left( \frac{\tilde{n}}{n} \right)^\beta \right| < \Delta,
	    $$
		with $ M \leqslant m, \, \tilde{m} < 2M $ and $ N \leqslant n, \, \tilde{n} < 2N $. 
		We then have
		$$
		\mathcal{A}(M, N; \Delta) \leqslant MN \log 2MN + \Delta M^2 N^2.
		$$
	\end{lemma}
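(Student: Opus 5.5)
This is the counting lemma of Fouvry and Iwaniec, so one option is simply to cite it. If we want a self-contained argument, I would reduce both ratios to lowest terms and then count lattice points near a curve. I read the inequality as holding up to an implied constant depending on $\alpha,\beta$.

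\emph{Step 1: reduction to coprime fractions.} Write $\tilde m/m=a/b$ and $\tilde n/n=c/d$ with $(a,b)=(c,d)=1$. Then $m=kb$ and $\tilde m=ka$ for some $k$, so $b\le 2M$, $a\asymp b$, and a given fraction $a/b$ arises from at most $2M/b$ pairs $(m,\tilde m)$. The same holds with $d\le 2N$, $c\asymp d$ and multiplicity at most $2N/d$. Hence
$$
\mathcal A(M,N;\Delta)\ll \sum_{b\le 2M}\sum_{d\le 2N}\frac{MN}{bd}\,\mathcal R(b,d),
$$
where $\mathcal R(b,d)$ counts coprime pairs $(a,c)$ with $a\asymp b$, $c\asymp d$ and $|(a/b)^\alpha-(c/d)^\beta|<\Delta$. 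Since $a/b$ and $c/d$ lie in $[1/2,2]$, the maps $t\mapsto t^\alpha$ and $t\mapsto t^\beta$ are bi-Lipschitz there with constants depending only on $\alpha,\beta$.

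\emph{Step 2: bounding $\mathcal R(b,d)$.} The aim is
$$
\mathcal R(b,d)\ll \mathbf 1_{b=d}\, b+\Delta bd+(\text{a term summable against }MN/(bd)).
$$
After summing over $b,d$, the diagonal $b=d$ gives $\sum_{b}\frac{MN}{b^2}\,b\ll MN\log 2MN$, and the $\Delta bd$ term gives $\Delta MN\cdot MN=\Delta M^2N^2$, which is exactly the claimed bound.

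To get the $\Delta bd$ term, note that distinct fractions $a/b$ have $\alpha$-powers separated by $\gg 1/b$, and distinct $c/d$ have $\beta$-powers separated by $\gg 1/d$. When $\Delta$ is large, say $\Delta\ge 1/\min(b,d)$, the trivial bound $\min(b,d)\,(1+\Delta\max(b,d))\ll \Delta bd$ suffices. When $\Delta$ is small, I would argue that each relevant window contains at most one point of the coarser set, and then bound how often a point of the finer set falls near it.

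\emph{Main obstacle.} The difficulty is the near-coincidence regime, where $\Delta$ is smaller than $1/(bd)$ and the naive bound $\min(b,d)$ is too large: summed against $MN/(bd)$ it would produce $MN\min(M,N)$. I would first try a spacing argument on pairs of solutions. If $(a,c)$ and $(a',c')$ both satisfy the inequality, subtracting gives $|(a/b)^\alpha-(a'/b)^\alpha-(c/d)^\beta+(c'/d)^\beta|<2\Delta$. Linearizing via the mean value theorem reduces this to a condition of the form $|(a-a')/b-\kappa\,(c-c')/d|\ll\Delta$, with $\kappa$ essentially fixed. Such a near-relation between two fractions with denominators $b$ and $d$ forces $bd$-scale separation unless it is an exact proportionality, which should only be possible in a degenerate configuration essentially equivalent to $b=d$. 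This would show that, off the diagonal, solutions are $\gg 1/(bd)$-spaced in the relevant variable, giving $\mathcal R(b,d)\ll 1+\Delta bd$.

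The leftover $1$ is itself a concern, since it sums to $\sum_{b,d}MN/(bd)\ll MN\log M\log N$, which exceeds the claim by a logarithm. I would remove it by counting the $1$ only for pairs $(b,d)$ that actually admit a solution, and then using that such $d$ are essentially determined by $b$ and the common value of the two powers. If this refinement fails, the fallback is to invoke Fouvry--Iwaniec's lemma directly, since the loss of one logarithm would be harmless in the later applications.
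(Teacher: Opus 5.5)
The paper gives no argument of its own here; it simply cites Lemma~1 of Fouvry--Iwaniec, so your first option is exactly what it does. You are also right to read the bound with an implied constant depending on $\alpha,\beta$. With constant $1$ it is false: for $M=N=1$ the quadruple $(1,1,1,1)$ is counted, while the right-hand side is $\log 2+\Delta<1$ for small $\Delta$.

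Your self-contained route, however, has a genuine gap in Step~2, which never produces a proven bound for $\mathcal{R}(b,d)$. The linearization cannot give $1/(bd)$-spacing. The mean-value factors $\alpha\xi^{\alpha-1}$ and $\beta\eta^{\beta-1}$ depend on the two solutions and are in general irrational, so $|(a-a')/b-\kappa(c-c')/d|\ll\Delta$ carries no arithmetic information. The separation $|a/b-c/d|\ge 1/(bd)$ between two distinct rationals is destroyed once the different nonlinear maps $t\mapsto t^{\alpha}$ and $t\mapsto t^{\beta}$ are applied to them. A pointwise bound $\mathcal{R}(b,d)\ll 1+\Delta bd$ is therefore a genuine problem about rational points near the curve $y=x^{\alpha/\beta}$, not something the sketch settles. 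Your identification of the degenerate case with $b=d$ is also only right for $\alpha=\beta$: for $\alpha=2\beta$ the exact coincidences are $c=a^2$, $d=b^2$, and for $\alpha=-\beta$ they are $c=b$, $d=a$. Finally, the leftover $1$ costs a second logarithm, so even your fallback does not give the stated bound.

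The missing idea is to use the spacing within each family of fractions across different denominators, rather than between the two families at fixed denominators. Fix $k=(m,\tilde m)$ and $l=(n,\tilde n)$ instead of the reduced denominators; equivalently, group $b$ and $d$ dyadically.
\begin{itemize}
\item Then $\tilde m/m=a/b$ with $a,b\in[M/k,2M/k)$, a full dyadic range, so there are $\ll (M/k)^2$ such fractions.
\item By $|a/b-a'/b'|\ge 1/(bb')$ they are $\gg k^2/M^2$-separated, and this separation survives the bi-Lipschitz map $t\mapsto t^{\alpha}$ on $[1/2,2]$.
\item Likewise the $\ll (N/l)^2$ numbers $(c/d)^{\beta}$ are $\gg l^2/N^2$-separated.
\item If $M/k\le N/l$, each of the $\ll (M/k)^2$ values $(a/b)^{\alpha}$ has $\ll 1+\Delta (N/l)^2$ values $(c/d)^{\beta}$ within $\Delta$; symmetrically otherwise.
\end{itemize}
Hence
\[
\mathcal{A}(M,N;\Delta)\ll\sum_{k\le 2M}\sum_{l\le 2N}\Big(\min\Big(\frac{M}{k},\frac{N}{l}\Big)^{2}+\Delta\,\frac{M^{2}N^{2}}{k^{2}l^{2}}\Big).
\]
Since $\sum_{l\le 2N}\min(M/k,N/l)^2\ll MN/k$, this is $\ll MN\log 2MN+\Delta M^2N^2$, with no near-coincidence analysis at all.
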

	
	\begin{proof}
		See \cite[Lemma~1]{FoIw}.
	\end{proof}
	
	\begin{lemma}\label{lemma:perron}
		Suppose $\alpha, \beta > 0, \alpha \neq \beta, T \geqslant 1$. Then		
		$$
		\frac{1}{\pi} \int_{-T}^{T} e^{i\alpha t} \frac{\sin \beta t}{t} \, dt = \delta + O\bigl(T^{-1}|\beta - \alpha|^{-1}\bigr),
		$$		
		where
		$$
		\delta =
		\begin{cases} 
			0 & \text{if } \alpha > \beta, \\
			1 & \text{if } \alpha < \beta.
		\end{cases}
		$$
	\end{lemma}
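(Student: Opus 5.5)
This is the truncated Perron formula, and I would prove it by reducing to the Dirichlet integral. Since $\sin\beta t/t$ is even, write $e^{i\alpha t} = \cos\alpha t + i\sin\alpha t$; the odd part $\sin(\alpha t)\sin(\beta t)/t$ integrates to zero over $[-T,T]$. The product-to-sum identity then gives
\[
\frac{1}{\pi}\int_{-T}^{T} e^{i\alpha t}\frac{\sin\beta t}{t}\,dt
= \frac{1}{\pi}\int_{0}^{T}\frac{\sin((\beta+\alpha)t)+\sin((\beta-\alpha)t)}{t}\,dt .
\]
Substituting $u=|\lambda| t$ in each piece shows $\int_0^T \sin(\lambda t)/t\,dt = \operatorname{sgn}(\lambda)\int_0^{|\lambda|T}\sin u/u\,du$.

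The key analytic input is the tail bound
\[
\int_0^X \frac{\sin u}{u}\,du = \frac{\pi}{2} + O(X^{-1}) \qquad (X>0).
\]
For $X\ge 1$ this follows from the classical value $\pi/2$ together with one integration by parts on $\int_X^\infty \sin u/u\,du = \cos X/X - \int_X^\infty \cos u/u^2\,du$, which is $O(1/X)$. For $X<1$ the left side is bounded, so it is trivially $\pi/2+O(X^{-1})$.

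Apply this with $\lambda_1=\beta+\alpha>0$ and $\lambda_2=\beta-\alpha\neq0$. The total equals
\[
\frac{1}{\pi}\Big(\frac{\pi}{2}+\operatorname{sgn}(\beta-\alpha)\frac{\pi}{2}\Big) + O\big(T^{-1}(\alpha+\beta)^{-1}\big)+O\big(T^{-1}|\beta-\alpha|^{-1}\big).
\]
The main term is $1$ if $\alpha<\beta$ and $0$ if $\alpha>\beta$, which is exactly $\delta$. Because $\alpha,\beta>0$, we have $\alpha+\beta\ge|\beta-\alpha|$, so the first error term is absorbed into $O(T^{-1}|\beta-\alpha|^{-1})$.

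Nothing here is difficult. The only point needing care is that the $O(X^{-1})$ tail bound must hold uniformly for all $X>0$, not just for large $X$; this is what makes the hypothesis $T\ge1$ unnecessary for the argument beyond normalization. The uniformity is exactly the trivial-bound observation for $X<1$ made above.
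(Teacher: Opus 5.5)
Your proof is correct and complete. The parity reduction, the product-to-sum step, and the tail bound $\int_0^X \frac{\sin u}{u}\,du=\frac{\pi}{2}+O(X^{-1})$, which holds uniformly for all $X>0$, give exactly $\delta+O(T^{-1}|\beta-\alpha|^{-1})$, with the $(\alpha+\beta)^{-1}$ error absorbed because $\alpha+\beta>|\beta-\alpha|$. The paper gives no argument of its own (it only cites Lemma~2.2 of Harman's book), and your reduction to the Dirichlet integral is the standard proof of this fact.
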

	
	\begin{proof}
		See \cite[Lemma~2.2]{Ha}.
	\end{proof}

\section{Bounds on exponential sums}\label{sec:Bounds on exponential sums}
Recall that the corresponding type I sum of the form \eqref{eq:exp} is 
\begin{equation}
\label{eq:tyI}
S_I \defeq \sum_{h \sim H} \delta_h  \sum_{n \sim N} a_n \sum_{ \substack{m \\ mn \in I}}  \e(h(mn)^\gamma)
\end{equation}	
and 
the corresponding type II sum is
\begin{equation}
\label{eq:tyII}
S_{II} \defeq \sum_{h \sim H} \delta_h \sum_{n \sim N}\sum_{ \substack{m \\ mn \in I}} a_n b_m \e(h(mn)^\gamma)
\end{equation}
where 
$$
|\delta_h| \ll 1, |a_n|, |b_m| \ll x^\eps. 
$$
We prove the following bounds for type I and type II sums. 

\begin{proposition}[Type I]\label{lem:type1}
Let $\theta \in (2/3,1]$ and $N$ satisfies the condition
\begin{align*}
N \ll x^{2\gamma -\frac{4}{3}-\varepsilon} ,
\end{align*}
and
\begin{align}\label{condition of type1}
\frac{8-3\theta}{6} < \gamma <1,
\end{align}
then
$$
S_I \ll x^{\theta-\varepsilon}.
$$
\end{proposition}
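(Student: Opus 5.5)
The plan is to estimate the innermost sum over $m$ trivially in $h$ and $n$, using only the short-interval structure, via the third-derivative case of Lemma~\ref{lem:high-drivative}. For fixed $h\sim H$ and $n\sim N$, the condition $mn\in I$ restricts $m$ to the interval $J_n\defeq(x/n,(x+y)/n]$. This interval has length $|J_n|\asymp y/N$; if $y/N<1$ the sum has at most one term and is trivial.

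We put $f(m)\defeq h n^\gamma m^\gamma$. Since $m\asymp x/n$ on $J_n$, we get
$$
|f^{(k)}(m)|\asymp H n^\gamma m^{\gamma-k}\asymp H x^{\gamma-k}N^k
$$
for every fixed $k$. We take $q=1$ (so $Q=2$) and $\lambda\defeq H x^{\gamma-3}N^3$, with $\alpha$ a constant depending only on $\gamma$. Lemma~\ref{lem:high-drivative} then gives
$$
\sum_{m\in J_n}\e(f(m))\ll \frac{y}{N}\lambda^{1/6}+\Big(\frac{y}{N}\Big)^{3/4}+\Big(\frac{y}{N}\Big)^{1/4}\lambda^{-1/4}.
$$
The case $b_m=\log m$ is reduced to this by partial summation, at the cost of a factor $\log x$. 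We then sum trivially over $h$ and $n$, using $|\delta_h|\ll1$ and $|a_n|\ll x^\eps$, and use $H\le x^{1-\gamma+\eps}$. Up to factors $x^{O(\eps)}$ this yields
$$
S_I\ll y H^{7/6}x^{(\gamma-3)/6}N^{1/2}+H N^{1/4}y^{3/4}+H^{3/4}y^{1/4}x^{(3-\gamma)/4}.
$$

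We now check that each of the three terms is $\ll x^{\theta-\eps}$.
\begin{itemize}
\item First term: with $H=x^{1-\gamma}$ it equals $y\,x^{2/3-\gamma}N^{1/2}$. This is acceptable exactly when $N\ll x^{2\gamma-4/3-\eps}$, which is the assumed bound on $N$.
\item Second term: inserting $N\le x^{2\gamma-4/3}$, it becomes $x^{1-\gamma+\gamma/2-1/3+3\theta/4}$. This is $\le x^{\theta}$ precisely when $\gamma>\frac{4}{3}-\frac{\theta}{2}=\frac{8-3\theta}{6}$, which is condition \eqref{condition of type1}.
\item Third term: it equals $x^{(6-4\gamma+\theta)/4}$, which is acceptable when $\gamma>(6-3\theta)/4$. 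For $\theta>2/3$ one has $(6-3\theta)/4<(8-3\theta)/6$, because $18-9\theta<16-6\theta$ is equivalent to $\theta>2/3$. So this term is already covered by \eqref{condition of type1}, and this is where the hypothesis $\theta>2/3$ enters.
\end{itemize}

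The main obstacle is bookkeeping rather than a genuine difficulty. The exponents only just balance, so the factors $x^{\eps}$ coming from $H\le x^{1-\gamma+\eps}$, from $a_n$, from the $\log m$ weight and from the dyadic splitting must be absorbed. This is handled by the strict inequalities: the $\eps$ in the bound on $N$ and the strictness of \eqref{condition of type1}, after shrinking $\eps$. One should also confirm that the hypotheses of Lemma~\ref{lem:high-drivative} hold uniformly in $h,n$, with $\alpha$ bounded independently of them. This holds because $m$ varies only within a factor $1+O(y/x)$ on $J_n$, so $f'''$ has constant sign and essentially constant size there.
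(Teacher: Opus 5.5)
Your proposal is correct and is essentially the paper's proof: the paper also bounds the $m$-sum over $(x/n,(x+y)/n]$ by Lemma~\ref{lem:high-drivative} (written for general $k$, then specialized to $k=1$), sums trivially over $h$ and $n$, and obtains exactly your three conditions $N\ll x^{2\gamma-4/3}$, $\gamma>(8-3\theta)/6$ and $\gamma>(6-3\theta)/4$, the last being implied by the second when $\theta>2/3$. Your remark about the first term is also accurate: it has no slack beyond the $\varepsilon$ in the hypothesis on $N$, so the $\varepsilon$'s coming from $H$ and $a_n$ must be taken smaller than that one, and this is the same convention the paper uses without stating it.
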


\begin{proposition}[Type II]\label{lem:type2_2}
	Let $\theta \in (2/3,1]$ and $N$ satisfies the condition
	\begin{align*}
		x^{3-\theta-2\gamma+\varepsilon} \ll N \ll x^{\theta + 4\gamma -4-\varepsilon} \quad \text{or} \quad x^{5-\theta-4\gamma +\varepsilon}\ll N \ll x^{\theta+2\gamma-2-\varepsilon},
	\end{align*}
	and
	\begin{align}\label{condition of type2_2}
		\max\Big(\frac{1+\theta}{2} , \frac{6-3\theta}{4}\Big) < \gamma <1,
	\end{align}
	then
	$$
	S_{II} \ll x^{\theta-\varepsilon}.
	$$
\end{proposition}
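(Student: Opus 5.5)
The plan is to follow the strategy announced in the introduction: use Cauchy--Schwarz and the A-process (Lemma~\ref{lem:A}) to remove the unknown coefficients $b_m$. This turns $S_{II}$ into a sum whose innermost variable carries only the smooth coefficient pattern of a type~I sum. That sum is then bounded by van der Corput's second-derivative estimate (Lemma~\ref{lem:2-drivative}), and the free parameter is optimised with Lemma~\ref{lemmea: balance}.

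\textbf{Step 1 (reduction).} Put $M\defeq x/N$, so $m\asymp M$. For fixed $n\sim N$ the condition $mn\in I$ restricts $m$ to an interval $J_n=(x/n,(x+y)/n]$ of length $\asymp y/N$. Apply Cauchy--Schwarz over $(h,n)$:
\begin{equation*}
|S_{II}|^2\ll x^{\eps}HN\sum_{h\sim H}\sum_{n\sim N}\Big|\sum_{m\in J_n}b_m\e(h(mn)^\gamma)\Big|^2 .
\end{equation*}

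\textbf{Step 2 (A-process).} Apply Lemma~\ref{lem:A} to the $m$-sum with a parameter $1\le Q\le y/N$. This gives
\begin{equation*}
|S_{II}|^2\ll x^{\eps}HN\Big(\frac{H N^2 (y/N)^2}{Q}+\frac{y}{NQ}\sum_{1\le|q|<Q}\sum_{m}\Big|\sum_{h\sim H}\sum_{n}\delta_h\,\e\big(h n^\gamma((m+q)^\gamma-m^\gamma)\big)\Big|\Big),
\end{equation*}
where the first term is the diagonal $q=0$. After interchanging, the $n$-sum runs over those $n\sim N$ with both $mn$ and $(m+q)n$ in $I$. This is an interval of length $\ll yN/x$, and it carries no arithmetic weights, so for fixed $(h,m,q)$ it is a genuine type~I inner sum.

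\textbf{Step 3 (exponential sum bound).} The phase is $f(n)=h((m+q)^\gamma-m^\gamma)n^\gamma$, which is $\asymp Hqx^\gamma/M$ in size. Its second derivative therefore satisfies $|f''|\asymp\lambda\defeq HqM^{\gamma-1}N^{\gamma-2}$, uniformly up to constants. Lemma~\ref{lem:2-drivative} gives
\begin{equation*}
\sum_n \e(f(n))\ll \frac{yN}{x}\lambda^{1/2}+\lambda^{-1/2}.
\end{equation*}
Summing trivially over $h\sim H$, over $m$ in a range of length $\ll y/N$, and over $|q|<Q$ yields an expression of the form $L(Q)=\sum A_iQ^{a_i}+\sum B_jQ^{-b_j}$. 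I then choose $Q\in(1,y/N]$ by Lemma~\ref{lemmea: balance}, using $H\le x^{1-\gamma+\eps}$. Requiring each resulting term, including the endpoint terms $Q=1$ and $Q=y/N$, to be $\ll x^{2\theta-2\eps}$ produces power-type inequalities in $N$. These should reduce to the first range $x^{3-\theta-2\gamma+\eps}\ll N\ll x^{\theta+4\gamma-4-\eps}$. Nonemptiness of that range, together with the endpoint constraints, should be where $\gamma>\frac{1+\theta}2$ and $\gamma>\frac{6-3\theta}4$ come from.

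\textbf{Step 4 (second range and main obstacle).} The second range $x^{5-\theta-4\gamma+\eps}\ll N\ll x^{\theta+2\gamma-2-\eps}$ should come from the same argument with the roles of $m$ and $n$ interchanged. Concretely, apply Cauchy--Schwarz over $(h,m)$, difference in $n$ (over an interval of length $\asymp y/M$), and bound the $m$-sum by Lemma~\ref{lem:2-drivative}; this replaces $N$ by $M=x/N$ in the conditions. By symmetry of $S_{II}$ in the two variables this is legitimate.

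I expect the main obstacle to be the bookkeeping of Step~3. The $n$-interval depends on $m$ and $q$, so I must check that it is still an interval of length $\ll yN/x$, and the endpoint cases must be handled: $Q$ cannot exceed $y/N$, and the term $\lambda^{-1/2}$ dominates when $\lambda$ is small. If the second-derivative bound turns out to be too weak in part of the range, my first fallback is to use the $h$-sum nontrivially by a further Cauchy--Schwarz, or to replace Lemma~\ref{lem:2-drivative} by Lemma~\ref{lem:high-drivative} with $q=1$.
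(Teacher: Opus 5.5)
Your strategy is the paper's: Cauchy--Schwarz over $(h,n)$, the A-process in $m$, the second-derivative test on the short $n$-sum with $\lambda\asymp HqM^{\gamma-1}N^{\gamma-2}$, Lemma~\ref{lemmea: balance} in $Q$, and the $m\leftrightarrow n$ symmetry for the other range (which the paper leaves implicit). However, Steps~2--3 as written would not produce the stated ranges, because of two counting errors.

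\textbf{The diagonal term is too large by a factor $N$.} For each $(h,n)$ the $q=0$ term is $\ll x^{\eps}Q^{-1}(y/N)^2$. Summing over $h\sim H$, $n\sim N$ and multiplying by $HN$ gives $H^2y^2/Q$, not $H^2Ny^2/Q$; your inner factor should be $HN(y/N)^2/Q$. With the extra $N$ you would need $Q\gg Nx^{2-2\gamma}$. Together with $Q\le y/N$ this forces $N\ll x^{(\theta+2\gamma-2)/2}$, which is too restrictive.

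\textbf{The $m$-range is miscounted.} Once the $n$-sum is innermost, $m$ runs over \emph{all} $m\asymp x/N$ (the union of the $J_n$), not over a range of length $y/N$. Only the $n$-range, of length $\asymp yN/x$, is short; the number of pairs is $(x/N)(yN/x)=y$. Counting $m$ over $y/N$ values understates the off-diagonal terms by a factor $x^{1-\theta}$.

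With the correct counts and $H\le x^{1-\gamma+\eps}$, you get, for $1\le Q\le y/(2N)$,
\[
x^{-\eps}|S_{II}|^2\ll Q^{-1}x^{2\theta-2\gamma+2}+Q^{1/2}x^{2\theta-2\gamma+2}N^{-1/2}+Q^{-1/2}x^{\theta-2\gamma+3}N^{-1/2}.
\]
Applying Lemma~\ref{lemmea: balance}, the requirements are:
\begin{itemize}
\item from the cross terms, $N\gg x^{6-6\gamma}$ and $N\gg x^{5-\theta-4\gamma}$;
\item from the endpoint $Q=1$, $N\gg x^{4-4\gamma}$;
\item from the endpoint $Q=y/(2N)$, $N\ll x^{\theta+2\gamma-2}$ and $\gamma>\frac{6-3\theta}{4}$.
\end{itemize}
So the direct computation yields the \emph{second} range $x^{5-\theta-4\gamma+\eps}\ll N\ll x^{\theta+2\gamma-2-\eps}$, not the first. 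The first range is its image under $N\mapsto x/N$, i.e.\ your Step~4. Your attribution is therefore reversed, but the union of the two ranges, and hence the conclusion, is unaffected.

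Also, $\gamma>\frac{1+\theta}{2}$ is not a nonemptiness condition. It is exactly what makes $x^{5-\theta-4\gamma}$ dominate $x^{6-6\gamma}$, while $\theta\le 1$ makes it dominate $x^{4-4\gamma}$. Nonemptiness only needs $\gamma>\frac{7-2\theta}{6}$, which the stated maximum already implies. None of your fallbacks (a nontrivial $h$-sum, higher derivative tests) is needed.
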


\begin{proposition}[Type II]\label{lem:type2_1}
	Let $\theta \in (3/4,1]$ and $N$ satisfies the condition
	\begin{align*}
		x^{2-\theta-\gamma +\varepsilon}\ll N \ll x^{3\theta+5\gamma-7-\varepsilon} \quad \text{or} \quad x^{8-3\theta-5\gamma+\varepsilon} \ll N \ll x^{\theta + \gamma -1-\varepsilon} ,
	\end{align*}
	and
	\begin{align}\label{condition of type2_1}
		\frac{9-4\theta}{6} < \gamma <1,
	\end{align}
	then
	$$
	S_{II} \ll x^{\theta-\varepsilon}.
	$$
\end{proposition}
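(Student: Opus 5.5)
By the symmetry of the type II sum in $m$ and $n$, it suffices to treat the first range $x^{2-\theta-\gamma+\varepsilon}\ll N\ll x^{3\theta+5\gamma-7-\varepsilon}$. Indeed, $mn\in I$ forces $M\asymp x/N$, and interchanging the roles of $(a_n,n)$ and $(b_m,m)$ maps the second range onto the first. I would follow Heath-Brown's approach in \cite{HB2}: apply Cauchy--Schwarz so that the variable $m$ carries the arbitrary coefficient $b_m$ on the outside. Then
\begin{equation*}
|S_{II}|^2 \ll x^{2\varepsilon} M \sum_{m\sim M}\Big|\sum_{h\sim H}\delta_h\sum_{\substack{n\sim N\\ mn\in I}} a_n \e(h m^\gamma n^\gamma)\Big|^2 .
\end{equation*}
Opening the square gives a sum over quadruples $(h,\tilde h,n,\tilde n)$ with weights $\delta_h\overline{\delta_{\tilde h}}a_n\overline{a_{\tilde n}}$. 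The inner sum is
\begin{equation*}
\sum_{m\in J(n,\tilde n)} \e\big(m^\gamma(hn^\gamma-\tilde h\tilde n^\gamma)\big),
\end{equation*}
where $J(n,\tilde n)=\{m:\ mn\in I,\ m\tilde n\in I\}$ is an interval of length $\ll y/N$. Keeping the short-interval constraint inside the $m$-sum, rather than removing it, is what makes the bound sensitive to $\theta$. (One could separate $m$ and $n$ with Lemma~\ref{lemma:perron}, but that gives no saving from $y/x$.) Moreover, this interval is nonempty only when $|n-\tilde n|\ll yN/x$, which cuts down the number of pairs $(n,\tilde n)$ by a factor $y/x$.

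Next I would split the quadruples dyadically according to the size of
\begin{equation*}
\Delta \defeq \Big|\frac{\tilde h}{h}-\Big(\frac{n}{\tilde n}\Big)^{\gamma}\Big|,
\end{equation*}
so that $|hn^\gamma-\tilde h\tilde n^\gamma|\asymp H N^\gamma \Delta$. For each fixed quadruple, the phase $f(m)=m^\gamma(hn^\gamma-\tilde h\tilde n^\gamma)$ satisfies $|f''(m)|\asymp \lambda\defeq HN^\gamma\Delta M^{\gamma-2}$. Lemma~\ref{lem:2-drivative} then bounds the $m$-sum by
\begin{equation*}
\ll \frac{y}{N}\lambda^{1/2}+\lambda^{-1/2},
\end{equation*}
and there is the trivial bound $y/N$. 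If $\lambda$ is too small, so that $\lambda^{-1/2}$ exceeds $y/N$, I would first apply the A-process (Lemma~\ref{lem:A}) in $m$ to obtain a larger effective derivative. The number of quadruples in the $\Delta$-box is controlled by Lemma~\ref{lemma:FoIw}, applied with exponents $1$ and $\gamma$: it is $\ll HN\log x + \Delta H^2N^2$. I would refine this using the restriction $|n-\tilde n|\ll yN/x$ if needed, although Lemma~\ref{lemma:FoIw} on its own may already suffice.

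Summing over dyadic $\Delta$ and over $m$ then yields a bound for $|S_{II}|^2$ of the shape
\begin{equation*}
x^{\varepsilon}M\Big(HN+\Delta H^2N^2\Big)\Big(\frac{y}{N}\big(HN^\gamma\Delta M^{\gamma-2}\big)^{1/2}+\big(HN^\gamma\Delta M^{\gamma-2}\big)^{-1/2}\Big),
\end{equation*}
together with the diagonal contribution $\Delta\ll (HN)^{-1}$, which is bounded trivially by $x^\varepsilon M\cdot HN\cdot y/N$. The A-process parameter will be chosen with Lemma~\ref{lemmea: balance}. I will insert $H\le x^{1-\gamma+\varepsilon}$, $M\asymp x/N$ and $y=x^\theta$, and require each resulting term to be $\ll x^{2\theta-2\varepsilon}$. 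The diagonal term and the terms with $\Delta$ small should produce the lower bound $N\gg x^{2-\theta-\gamma+\varepsilon}$. The term $\Delta H^2N^2\cdot (y/N)\lambda^{1/2}$ with $\Delta\asymp1$ should produce the upper bound $N\ll x^{3\theta+5\gamma-7-\varepsilon}$. The range is nonempty exactly when $2-\theta-\gamma<3\theta+5\gamma-7$, that is $\gamma>(9-4\theta)/6$, which matches \eqref{condition of type2_1}. The hypothesis $\theta>3/4$ should be what ensures $y/N\ge1$, and that the relevant interval $J$ is long enough for the second-derivative test to beat the trivial bound.

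The main obstacle I expect is the bookkeeping in the middle range of $\Delta$. There the trivial bound, the $\lambda^{1/2}$ term and the $\lambda^{-1/2}$ term compete, and the $m$-interval $J(n,\tilde n)$ may be shorter than $y/N$ or even empty. I would handle this by always using $\min\big(y/N,\ \tfrac{y}{N}\lambda^{1/2}+\lambda^{-1/2}\big)$ and checking that each endpoint of the $\Delta$-sum is compatible with the stated exponents. If the exponents do not close, I would first add the A-process step with a shift parameter optimized via Lemma~\ref{lemmea: balance}.
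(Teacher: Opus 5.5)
Your overall architecture matches the paper's: the symmetry reduction to the first range, Cauchy--Schwarz with $m$ outside, the second-derivative test on an $m$-interval of length $y/N$, and the Fouvry--Iwaniec count. Your lower endpoint $N\gg x^{2-\theta-\gamma+\varepsilon}$ from the diagonal is also correct.

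The gap is the upper endpoint. If you carry out your own bookkeeping, the off-diagonal quadruples with $\Delta\asymp1$ contribute
\[
M\cdot H^2N^2\cdot\frac{y}{N}\big(HN^\gamma M^{\gamma-2}\big)^{1/2}\asymp x^{5/2+\theta-2\gamma}N .
\]
This is $\ll x^{2\theta}$ only for $N\ll x^{\theta+2\gamma-5/2}$, not for $N\ll x^{3\theta+5\gamma-7}$. Moreover $(3\theta+5\gamma-7)-(\theta+2\gamma-\tfrac52)=2\theta+3\gamma-\tfrac92$, which is positive exactly when $\gamma>(9-4\theta)/6$. So your argument falls short of the statement everywhere the statement is non-vacuous.

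Your A-process fallback does not repair this. You invoke it for small $\lambda$, where differencing only lowers the derivative further, whereas the bottleneck is at $\Delta\asymp1$, i.e. the largest $\lambda$. Even a third-derivative estimate there gives only $N\ll x^{2\theta+4\gamma-16/3}$, which is still below $3\theta+5\gamma-7$ because $\theta+\gamma>5/3$ in this range.

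The missing idea is Heath-Brown's localization before Cauchy--Schwarz. Partition the pairs $(n,h)$ into classes $\cL_q$, $1\le q\le Q$, according to which interval of length $4HN^\gamma/Q$ contains $hn^\gamma$, and apply Cauchy--Schwarz over $(q,m)$. This costs a factor $Q$. In exchange, only quadruples with $|h_1n_1^\gamma-h_2n_2^\gamma|\le 4HN^\gamma/Q$ survive, so the off-diagonal count drops by $Q^{-1}$ and $\lambda^{1/2}$ drops by $Q^{-1/2}$. One gets
\[
x^{-\varepsilon}|S_{II}|^2\ll Q^{-1/2}x^{5/2+\theta-2\gamma}N+Q^{1/2}x^{3/2+\theta-\gamma}+Qx^{2+\theta-\gamma}N^{-1}+Q^{1/2}x^{7/2-2\gamma},
\]
valid for $1\le Q\le x^{2\theta-1}$; this range is what keeps the trivial-bound threshold $y^{-2}x^{2-\gamma}N^\gamma$ below $4HN^\gamma/Q$. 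Balancing the first and third terms, with $Q\asymp x^{(1-2\gamma)/3}N^{4/3}$, gives exactly $N\ll x^{3\theta+5\gamma-7}$. Your version is the special case $Q=1$.

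Finally, $\theta>3/4$ is not about ensuring $y/N\ge1$. It is what makes $(9-4\theta)/6>(7-4\theta)/4$, so that the term $x^{7/2-2\gamma}$ (your $\Delta H^2N^2\lambda^{-1/2}$ at $\Delta\asymp1$) is admissible under the stated hypothesis on $\gamma$.
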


\subsection{Estimate of Type I Sum: Proof of Proposition \ref{lem:type1}}\label{Estimate of Type I Sum}
In this part, we shall bound the type I sum defined as \eqref{eq:tyI}. Unlike an usual approach, we start by estimating the inner sum of $m$ by Lemma~\ref{lem:high-drivative} for the sake of restriction that $mn \in I$, which gives that
$$
x^{-\varepsilon}S_I \ll \sum_{h \sim H} \sum_{n \sim N} \Big| \sum_{ \substack{m \\ mn \in I}}  \e(h(mn)^\gamma) \Big|.
$$
Let $f(m) \defeq h(mn)^{\gamma}$. The $(k+2)$-th derivative is 
$$
f^{(k+2)}(m)=\gamma \cdots (\gamma-k-1)hm^{\gamma-k-2}n^{\gamma} \asymp h \big( \frac{x}{n} \big)^{\gamma-k-2} n^{\gamma} \eqdef \lambda.
$$
By Lemma~\ref{lem:high-drivative}, we derive that
$$
x^{-\varepsilon}S_I \ll \sum_{h \sim H} \sum_{n \sim N} \bigg( \frac{y}{n} \lambda^{1/(4K-2)} + \big( \frac{y}{n} \big)^{1-1/2K} + \big( \frac{y}{n} \big)^{1-2/K+1/K^2} \lambda^{-1/2K} \bigg), 
$$
where $K=2^k$. Sum over $h$ and $n$ to obtain that
\begin{align*}
\nonumber
x^{-\varepsilon}S_I \ll &x^{\theta  - \frac{4K-1}{4K-2}\gamma + \frac{\gamma + 4K-k-3}{4K-2}}N^{\frac{k+2}{4K-2}} + x^{1+\theta - \gamma -\frac{\theta}{2K}}N^{\frac{1}{2k}} \\
&+ x^{1+\theta - \gamma + \frac{k+1 - 4\theta}{2K} + \frac{\theta}{K^2}}N^{\frac{1}{K}-\frac{k}{2K}-\frac{1}{K^2}}.
\end{align*}
Let the upper bound of $S_I \ll x^{\theta-\varepsilon}$ and $k=1$. We achieve that Type I sum holds if 
\begin{align*}
N \ll x^{2\gamma -\frac{4}{3}} 
\end{align*}
with 
$$
\gamma > \frac{8-3\theta}{6} \quad \text{and} \quad \gamma > \frac{6-3\theta}{4}.
$$
Noting that for $\theta \in (2/3,1]$
$$
\frac{8-3\theta}{6} > \frac{6-3\theta}{4}
$$
all the time, we finish the proof of Proposition~\ref{lem:type1}.

\subsection{Estimate of Type II Sum: Proof of Proposition \ref{lem:type2_2}}\label{Estimate of Type II Sum}
In this part, we estimate type II sum defined as \eqref{eq:tyII}. Applying the Cauchy-Schwarz inequality, it follows that
\begin{align*}
	x^{-\varepsilon}S_{II}^2 \ll HN \sum_{h \sim H} \sum_{n \sim N} \Big| \sum_{\substack{m \\ mn\in I}} b_m \e(h(mn)^{\gamma}) \Big|^2.
\end{align*}
By the A-process, namely, Lemma~\ref{lem:A}, we have
\begin{align*}
	x^{-\varepsilon}S_{II}^2 &\ll HN \sum_{h \sim H} \sum_{n \sim N} \frac{y/n+Q}{Q} \sum_{0 \leqslant |q| \leqslant Q}\big( 1-\frac{|q|}{Q} \big) \sum_{\substack{m \\ mn \in I}}b_{m+q}\overline{b_m} \e(hn^\gamma((m+q)^{\gamma}-m^{\gamma})) \\
	& \ll \frac{H^2y^2}{Q} + \frac{Hy}{Q} \sum_{h \sim H} \sum_{1 \leqslant |q| \leqslant Q} \sum_{m \asymp x/N} \Big| \sum_{\substack{n \\ mn\in I}} \e(hn^\gamma((m+q)^{\gamma}-m^{\gamma})) \Big|
\end{align*}
provided $1 \leqslant Q \leqslant y/(2N).$ Applying Lemma~\ref{lem:2-drivative} to the inner sum of variable $n$ we obtain that
\begin{align*}
	x^{-\varepsilon}S_{II}^2 \ll \frac{H^2y^2}{Q} + \frac{Hy}{Q} \sum_{h \sim H} \sum_{1 \leqslant |q| \leqslant Q} \sum_{m \asymp x/N} \bigg( yh^{\frac{1}{2}}q^{\frac{1}{2}}m^{\frac{\gamma-3}{2}}N^{\frac{\gamma}{2}-1} + h^{-\frac{1}{2}}q^{-\frac{1}{2}}m^{\frac{-\gamma+1}{2}}N^{-\frac{\gamma}{2}+1} \bigg).
\end{align*}
Recalling that $H \leqslant x^{1-\gamma+\varepsilon}$ and $y=x^{\theta}$, we have
\begin{align*}
	x^{-\varepsilon}S_{II}^2 \ll Q^{-1}x^{2\theta - 2\gamma +2} + Q^{\frac{1}{2}} x^{2\theta - 2\gamma +2} N^{-\frac{1}{2}} + Q^{-\frac{1}{2}} x^{\theta - 2\gamma +3} N^{-\frac{1}{2}}.
\end{align*}
Optimizing $Q$ over $[1,2x^{\theta}N^{-1}]$ by Lemma \ref{lemmea: balance} we find that when 
$$
\gamma > \frac{1+\theta}{2},
$$
type II sum holds if
\begin{align*}
	x^{5-\theta-4\gamma+\varepsilon} \ll N \ll x^{\theta +2\gamma-2-\varepsilon} 
\end{align*}
with
$$
\gamma > \frac{7-2\theta}{6} \quad \text{and} \quad \gamma > \frac{6-3\theta}{4}.
$$
Noting that for $\theta\in (2/3,1]$
$$
\max\big(\frac{6-3\theta}{4},\frac{1+\theta}{2}\big) \geqslant \frac{7-2\theta}{6}
$$
all the time, we finish the proof of Proposition~\ref{lem:type2_2}.

\subsection{Estimate of Type II Sum: Proof of Proposition \ref{lem:type2_1}}\label{Estimate of Type II' Sum}

Now we try an alternative method due to Heath-Brown \cite{HB2}. 

We give the estimate when $N$ is small first. Since the condition of the short interval entangles the relationship between $m$ and $n$, we cannot split $m$ and $n$ at the same time. Noting that
$$
0<hn^{\gamma} \leqslant 4HN^{\gamma},
$$
we decompose the collection of available pairs $(n, h)$ into sets $\cL_q(1 \leqslant q \leqslant Q)$, defined by
$$
\cL_q \defeq \{(n,h): 4HN^{\gamma}(q-1) < Qhn^{\gamma} \leqslant 4HN^{\gamma}q \}. 
$$
Then
$$
S_{II} = \sum_{1 \leqslant q \leqslant Q} \sum_{ \substack{h \sim H \ n \sim N \\ (n,h) \in \cL_q}} \sum_{mn \in I} \delta_ha_n b_m \e(h(mn)^\gamma).
$$
By the Cauchy-Schwarz inequality, it follows that
\begin{align}
	\label{4.1}
\nonumber
x^{-\varepsilon}|S_{II}|^2 &\ll Q\frac{x}{N} \sum_{1 \leqslant q \leqslant Q} \sum_{m \asymp x/N} \Big| \sum_{\substack{h \sim H \ mn \in I \\ hn^{\gamma} \in \cL_q}} \delta_h a_n \e(h(mn)^{\gamma}) \Big|^2 \\
\nonumber
&\ll Q\frac{x}{N} \sum_{1 \leqslant q \leqslant Q} \sum_{m \asymp x/N} \underset{\substack{(n_1,h_1) \in \cL_q \\ (n_2,h_2) \in \cL_q \\ |n_1-n_2| \ll Nx^{\theta-1}}}{\sum_{\substack{h_1 \sim H \\ h_2 \sim H}} \sum_{\substack{mn_1 \in I \\ mn_2 \in I}}}  \delta_{h_1} \overline{\delta_{h_2}} a_{n_1} \overline{a_{n_2}} \e((h_1n^{\gamma}_1 - h_2n^{\gamma}_2)m^\gamma) \\
&\ll Q\frac{x}{N} \sum_{1 \leqslant q \leqslant Q} \underset{\substack{(n_1,h_1) \in \cL_q \\ (n_2,h_2) \in \cL_q \\ |n_1-n_2| \ll Nx^{\theta-1}}}{\sum_{\substack{h_1 \sim H \\ h_2 \sim H}} \sum_{\substack{n_1 \sim N \\ n_2 \sim N}}} \Big| \sum_{\substack{mn_1\in I \\ mn_2 \in I}} \e((h_1n^{\gamma}_1 - h_2n^{\gamma}_2)m^\gamma) \Big|.
\end{align}
Next we estimate the inner sum of \eqref{4.1} by Lemma~\ref{lem:2-drivative} and the trivial bound. Let
$$
f(m) \defeq (h_1n^{\gamma}_1 - h_2n^{\gamma}_2)m^\gamma ,
$$
we have
$$
f^{''}(m)=\gamma(\gamma-1)(h_1n^{\gamma}_1 - h_2n^{\gamma}_2)m^{\gamma-2} \asymp (h_1n^{\gamma}_1 - h_2n^{\gamma}_2) \big( \frac{x}{N} \big)^{\gamma-2}.
$$
Define $\Delta \defeq h_1n^{\gamma}_1 - h_2n^{\gamma}_2$. We have
\begin{align}\label{4.2}
x^{-\varepsilon}|S_{II}|^2 &\ll Q\frac{x}{N} \sum_{1 \leqslant q \leqslant Q} \underset{\substack{(n_1,h_1) \in \cL_q \\ (n_2,h_2) \in \cL_q \\ |n_1-n_2| \ll Nx^{\theta-1}}}{\sum_{\substack{h_1 \sim H \\ h_2 \sim H}} \sum_{\substack{n_1 \sim N \\ n_2 \sim N}}} \min{\bigg(\frac{y}{N},\frac{y}{N}\Big(\Delta\big( \frac{x}{N} \big)^{\gamma-2}\Big)^{1/2}+\Big(\Delta\big( \frac{x}{N} \big)^{\gamma-2}\Big)^{-1/2}\bigg)}.
\end{align}
One may ask why we do not use other methods here. The answer is similar as before since we only consider the situation when $\theta$ tends to small instead of $\gamma$. The reason why we do not use the method of exponent pairs is that the conditions of the exponent pair suppose the variable lies in an interval $(M,2M]$ such that this method would not use the information of short interval well. One may still try Lemma~\ref{lem:high-drivative}, however the second derivative test is possibly the best in the view of $\theta$.	

Now we continue to estimate $S_{II}$. Firstly, summing over $q$, the contribution of the term 
$$
\frac{y}{N}\Big(\Delta\big( \frac{x}{N} \big)^{\gamma-2}\Big)^{1/2}
$$
in \eqref{4.2} is
\begin{align}\label{4.3}
\nonumber
&\ll Q\frac{x}{N} \underset{\substack{\Delta \leqslant 4HN^{\gamma}Q^{-1} \\ |n_1-n_2| \ll Nx^{\theta-1}}}{\sum_{\substack{h_1 \sim H \\ h_2 \sim H}} \sum_{\substack{n_1 \sim N \\ n_2 \sim N}}} \frac{y}{N}\Big(\Delta\big( \frac{x}{N} \big)^{\gamma-2}\Big)^{1/2} \\
&\ll Q^{\frac{1}{2}}x^{\frac{1}{2}+\theta}N^{-1} \cdot \cD(4HN^{\gamma}Q^{-1}).
\end{align}	
Here the function $\cD(\delta)$ denotes the number of elements of the following set
$$
\{(n_1,n_2,h_1,h_2):n_i \sim N, \ h_i \sim H, \ \big| h_1n^\gamma_1 - h_2n^\gamma_2 \big| \leqslant \delta\},
$$
where we drop the condition $|n_1-n_2| \ll Nx^{\theta-1}$. By Lemma~\ref{lemma:FoIw} we have the contribution of \eqref{4.3} is
\begin{align}\label{4.4}
\ll Q^{-\frac{1}{2}}x^{\frac{5}{2}+\theta-2\gamma}N + Q^{\frac{1}{2}}x^{\frac{3}{2}+\theta -\gamma}.
\end{align}
When $\Delta < y^{-2}x^{2-\gamma}N^{\gamma}$, we have 
$$
\frac{y}{N} < \Big(\Delta\big( \frac{x}{N} \big)^{\gamma-2}\Big)^{-1/2}.
$$ 
Thus the $y/N$ term in the minimum of \eqref{4.2} produces a contribution
\begin{align}\label{4.5}
\ll Q\frac{x}{N} \frac{y}{N} \cdot \cD(y^{-2}x^{2-\gamma}N^{\gamma}).
\end{align}
If $\Delta \geqslant y^{-2}x^{2-\gamma}N^{\gamma}$, we find that the last term $\big(\Delta(x/N)^{\gamma-2}\big)^{-1/2}$ in \eqref{4.2} contributes
\begin{align}\label{4.6}
\ll Q\frac{x}{N} \big( \frac{x}{N} \big)^{1-\frac{\gamma}{2}} \max_{y^{-2}x^{2-\gamma}N^{\gamma} \leqslant \Delta \leqslant 4HN^{\gamma}Q^{-1}} \Delta^{-\frac{1}{2}} \cD(\Delta).
\end{align}
Note that this estimate covers \eqref{4.5} if we take $\Delta = y^{-2}x^{2-\gamma}N^{\gamma}$. Then again, by Lemma~\ref{lemma:FoIw} we obtain that \eqref{4.6} is
\begin{align}\label{4.7}
\ll Qx^{2+\theta-\gamma}N^{-1} + Q^{\frac{1}{2}}x^{\frac{7}{2}-2\gamma}.
\end{align}
Since the condition under the maximum symbol of \eqref{4.6} implies that $1\leqslant Q \leqslant x^{2\theta -1}$, we combine \eqref{4.4} and \eqref{4.7} to derive that
$$
x^{-\varepsilon}|S_{II}|^2 \ll Q^{-\frac{1}{2}}x^{\frac{5}{2}+\theta-2\gamma}N + Q^{\frac{1}{2}}x^{\frac{3}{2}+\theta -\gamma} + Qx^{2+\theta-\gamma}N^{-1} + Q^{\frac{1}{2}}x^{\frac{7}{2}-2\gamma}.
$$
Optimizing $Q$ over $[1,x^{2\theta-1}]$ and using Lemma~\ref{lemmea: balance} we find that type II sum holds if 
\begin{align*}
x^{2-\theta-\gamma +\varepsilon}\ll N \ll x^{3\theta+5\gamma-7-\varepsilon} 
\end{align*}
with 
$$
\gamma > \frac{9-4\theta}{6} \quad \text{and} \quad \gamma > \frac{7-4\theta}{4}.
$$
With $\theta \in (3/4,1]$, it follows that
$$
\frac{9-4\theta}{6} > \frac{7-4\theta}{4}
$$
all the time. We finish the proof of Proposition~\ref{lem:type2_1}.

\section{An asymptotic result: Proof of Theorem \ref{thm1}}\label{sec:An asymptotic result}
In this section, we shall give a proof of Theorem~\ref{thm1}. Let $\theta\in (2/3,1)$ and $I$ be the short interval $(x,x+y]$ as defined. By Lemma~\ref{lem:PS} we obtain that
$$
\pi_{c}(x+y) - \pi_{c}(x) = \Sigma_1 + \Sigma_2 + O(1),
$$
where
\begin{align*}
&\Sigma_1 \defeq \sum_{\substack{ p \in I }} \gamma  p^{\gamma-1}, \\
&\Sigma_2 \defeq \sum_{\substack{ p \in I }} \big( \psi(-(p+1)^{\gamma}) - \psi(-p^{\gamma}) \big).
\end{align*}
The main term is
\begin{align}
\nonumber \Sigma_1 &= \sum_{p \in I}\gamma x^{\gamma-1} + O\Big( \sum_{p \in I} \big((x+y)^{\gamma-1} -x^{\gamma-1}\big) \Big).
\end{align}
Since the well-known result by Huxley \cite{Hux} shows that 
\begin{align*}
	\pi(x+y)-\pi(x)=(1+o(1))\frac{y}{\log x}
\end{align*}
when $y \geqslant x^{7/12+\varepsilon}$, we obtain that
\begin{align}\label{Main term}
\Sigma_1 &= \frac{\gamma yx^{\gamma-1}}{\log x} + O\Big(\frac{yx^{\gamma-1}}{\log^2 x}\Big).
\end{align}
Next we turn our attention to $\Sigma_2$. By \eqref{Main term} and applying the prime number theorem, it is sufficient to show that
\begin{align*}
S \defeq \sum_{\substack{ n \in I }} \Lambda(n) \big( \psi(-(n+1)^{\gamma}) - \psi(-n^{\gamma}) \big) \ll x^{\theta + \gamma - 1 - \varepsilon}.
\end{align*}
According to Vaaler's approximation (Lemma~\ref{lem:Vaaler}), we write that
$$
S = S_1 + O(|S_2|),
$$
where
\begin{align*}
&S_1 \defeq \sum_{1 \leqslant |h| \leqslant H} a_h \sum_{n \in I} \Lambda(n) \Big( \e(h(n+1)^\gamma) - \e(hn^\gamma) \Big), \\
&S_2 \defeq \sum_{0 \leqslant |h| \leqslant H} b_h \sum_{n \in I} \Lambda(n) \Big( \e(h(n+1)^\gamma) + \e(hn^\gamma) \Big).
\end{align*}
Taking $H = x^{1-\gamma+
	\varepsilon}$, by a standard method similar to arguments of Lemma~\ref{harman sieve type I II information}, it suffices to prove that 
$$
\sum_{h \sim H} \Big| \sum_{n\in I} \Lambda(n) \e(hn^\gamma) \Big| \ll x^{\theta-\varepsilon}.
$$ 

Next we combine our exponential sum estimations, including Proposition~\ref{lem:type1}, \ref{lem:type2_2} and \ref{lem:type2_1} by the Heath-Brown identity (see Page 1366-1368 in \cite{HB}). The following lemma can be proved by a same idea to Proposition 1 in \cite{BF}. 
\begin{lemma}\label{lem:HB indentity}
If we have real numbers $0<a<1, 0<b<c<1$ satisfying
$$
b<\frac{2}{3}, \quad 1-c<c-b \quad \text{and}\quad 1-a<\frac{1}{2}c,
$$
then the sum
$$
\sum_{h \sim H} \delta_h \sum_{n\in I} \Lambda(n) \e(hn^\gamma) 
$$
could be bounded by type I sum defined as equation \eqref{eq:tyI} with $N \leqslant x^a$ and type II sum defined as equation \eqref{eq:tyII} with $x^b \leqslant N \leqslant x^c$.
\end{lemma}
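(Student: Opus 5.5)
We would follow the argument of Proposition~1 in \cite{BF}, which rests on the Heath-Brown identity. Fix a large integer $k$ (say $k=3$, or larger if needed for the combinatorics). Apply the identity to $\Lambda(n)$ for $n\in I\subset(x,2x]$ with parameter $z=(2x)^{1/k}$. This writes $\Lambda(n)$ as a linear combination, with $O_k(1)$ coefficients, of sums
\[
\sum_{n_1\cdots n_{2j}=n} (\log n_1)\,\mu(n_{j+1})\cdots\mu(n_{2j}),\qquad 1\le j\le k,\quad n_{j+1},\dots,n_{2j}\le z.
\]
Dyadically split every variable as $n_i\sim N_i$, with $\prod N_i\asymp x$. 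This gives $O(\log^{2k}x)$ sums. Each has the shape
\[
\sum_{h\sim H}\delta_h\sum_{n_1\cdots n_{2j}\in I}c_1(n_1)\cdots c_{2j}(n_{2j})\,\e\bigl(h(n_1\cdots n_{2j})^\gamma\bigr),
\]
where the coefficients are $\ll x^{\eps}$ and the unrestricted variables $n_1,\dots,n_j$ carry $1$ or $\log n_i$. The condition $mn\in I$ does not need to be separated: it is built into \eqref{eq:tyI} and \eqref{eq:tyII}. So after grouping variables we only need to show that each dyadic piece is a type I sum with $N\le x^a$ or a type II sum with $N\in[x^b,x^c]$. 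The $\log$ factors and the $O(\log^{2k}x)$ count of pieces are absorbed by the $x^{-\eps}$ savings.

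Write $N_i=x^{\alpha_i}$ with $\sum\alpha_i=1+o(1)$, where each $\mu$-variable has $\alpha_i\le 1/k$.

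\textbf{Case 1: some smooth variable has $\alpha_i\ge a$.} Take $m$ to be that variable, with $b_m=1$ or $\log m$. Let $n$ be the product of the rest, with $a_n$ given by a divisor-type convolution, so $|a_n|\ll x^\eps$. Then $N\ll x^{1-a}$. Since $1-a<c/2<a$, with the last inequality needing a small check ($c/2<1/2$ forces $a>1/2$), this is a type I sum with $N\le x^a$.

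\textbf{Case 2: all smooth variables have $\alpha_i<a$.} Try to find a subset $S$ of indices with $\sum_{i\in S}\alpha_i\in[b,c]$. If one exists, group $n=\prod_{i\in S}n_i$ and $m=$ the rest; this is a type II sum.

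The combinatorial claim is that such a subset always exists when $1-c<c-b$ and $b<2/3$, provided $k$ is large enough that $1/k<c-b$. Add the pieces of size $<c-b$ one at a time. Partial sums move in steps smaller than the window length $c-b$, so they cannot jump over $[b,c]$, unless every small piece has been used before reaching $b$. If that happens, the remaining mass lies in big pieces of size $\ge c-b>1-c$.

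Next we argue that there are at most two such big pieces. Three of them would total more than $3(1-c)$, so we need to rule out a total exceeding $1$ with the remaining small pieces. If there is exactly one big piece $\alpha$, then $\alpha\in[1-c,1-b]$ or $\alpha\in[b,c]$, and in either case $\alpha$ or its complement lies in the window. A single big piece also has $\alpha<a$; since $1-a<c/2$, the complement $1-\alpha>1-a$ is handled by combining these bounds. If there are two big pieces $\alpha_1\le\alpha_2$, use $b<2/3$ together with $1-c<c-b$ to show that one of $\alpha_1$, $\alpha_2$, $1-\alpha_1$, $1-\alpha_2$ lies in $[b,c]$. If none does, the total exceeds $1$ or one piece exceeds $a$. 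The latter is Case 1, by the type I condition $1-a<c/2$.

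The main obstacle is this last combinatorial bookkeeping: verifying that the three stated inequalities exactly exclude the configuration where neither a type II subset nor a large smooth variable exists. In the two-big-pieces subcase the interplay between $b<2/3$ and $1-a<c/2$ is delicate. I would handle it by an explicit case split on whether $\alpha_2\ge a$, following \cite{BF} closely.
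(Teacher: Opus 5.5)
\textbf{There is a genuine gap in the grouping step.} Your overall framework is the intended one: the Heath-Brown identity with $1/k<c-b$, dyadic splitting, then a combinatorial grouping. The paper itself only points to Proposition~1 of \cite{BF}. The error is the type~I threshold.

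A type~I sum with $N\le x^a$ only needs the smooth variable $m$ to have size $x/N\ge x^{1-a}$. So your Case~1 should read \emph{some smooth variable has $\alpha_i\ge 1-a$}, not $\alpha_i\ge a$. With your threshold, smooth pieces in $[1-a,a)$ fall into Case~2, where no type~II grouping need exist. Here is a concrete failure:
\begin{itemize}
\item The values $a=0.7$, $b=0.6$, $c=0.85$ satisfy all the hypotheses.
\item Take $k=10$ and the $j=2$ term with exponents $(0.45,0.45,0.05,0.05)$: two smooth variables and two $\mu$-variables.
\item The subset sums are $0,0.05,0.1,0.45,0.5,0.55,0.9,0.95,1$. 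None lies in $[0.6,0.85]$, and no smooth piece reaches $0.7$.
\end{itemize}
This is exactly your two-big-pieces subcase, and the dichotomy you assert there is false. None of $\alpha_1,\alpha_2,1-\alpha_1,1-\alpha_2$ lies in $[b,c]$, the total does not exceed $1$, and no piece exceeds $a$. Yet the piece of size $x^{0.45}\ge x^{1-a}$ is a legitimate type~I variable.

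Two further steps are also unsupported:
\begin{itemize}
\item The bound $3(1-c)$ can be below $1$, so it does not exclude three big pieces.
\item Nothing forces a single big piece into $[1-c,1-b]\cup[b,c]$.
\end{itemize}

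The repair is short, and it shows where each hypothesis enters.
\begin{enumerate}
\item Assume every smooth piece is $<1-a<c/2$, every $\mu$-piece is $\le 1/k<c-b$, and no subset sum lies in $[b,c]$. Call a piece big if it is $\ge c-b$. Big pieces are then smooth, hence each is $<c/2$.
\item Small pieces can be added one at a time in steps $<c-b$. So the total $s$ of the big pieces must exceed $c$. Otherwise either $s\in[b,c]$, or $s<b$ and adding small pieces (total mass $1-s$) pushes the sum into $[b,c)$.
\item For a big piece $\beta$, the complementary subset has sum $1-\beta\le 1-(c-b)<c$, using $1-c<c-b$. Hence $1-\beta<b$, that is, $\beta>1-b$.
\item Since $b<2/3$ gives $3(1-b)>1$, there are at most two big pieces.
\item Each big piece is $<c/2$, which is where $1-a<c/2$ is used. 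So their total is $<c$, contradicting $s>c$.
\end{enumerate}
Therefore either some smooth piece is $\ge 1-a$, giving a type~I sum with $N\le x^a$, or some subset sum lies in $[b,c]$, giving a type~II sum.
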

We apply Lemma~\ref{lem:HB indentity} to the following several cases. If $\theta \in (2/3,40/51]$, we apply Proposition~\ref{lem:type1} and \ref{lem:type2_2}. We choose
$$
a = 2\gamma-\frac{4}{3}, \quad b=5-\theta-4\gamma \quad \text{and}\quad c=\theta +2\gamma-2. 
$$
By Lemma~\ref{lem:HB indentity}, we know Theorem~\ref{thm1} holds provided that
$$
\left\{\begin{array}{ll}
	5-\theta-4\gamma < \frac{2}{3}, & \\
	1 - (\theta +2\gamma-2) < (\theta +2\gamma-2) - (5-\theta-4\gamma), & \\
	1 - (2\gamma-\frac{4}{3}) < \frac{1}{2} (\theta +2\gamma-2). &
\end{array}\right.
$$
It follows that
\begin{align}\label{condition1}
\frac{20-3\theta}{18} < \gamma < 1.
\end{align}
For $\theta > 3/4$, we use Proposition~\ref{lem:type1}, \ref{lem:type2_2} and \ref{lem:type2_1} flexibly to get the best suitable range of $\gamma$. Since the range of type II is complex, we give Figure~\ref{fig1} for assistance. Here the blue interval is obtained by Proposition~\ref{lem:type2_2} and the red interval is obtained by Proposition~\ref{lem:type2_1}. These intervals intersect under some conditions.
\begin{figure}[htbp]
	\centering
\begin{tikzpicture}[scale=14] 
	\draw (0,0) -- (1,0);
	\draw (0,0.02) -- (0,0);
	\node[below] at (-0.013,0.005) {0};
	\draw (1,0.02) -- (1,0);
	\node[below] at (1.013,0.005) {1};
	
	\node[red, rotate=0] at (0.05, 0) {(};
	\node[red, rotate=0] at (0.21, 0) {)};
	
	\node[red] at (0.05, -0.04) {$2-\theta-\gamma$};
	\node[red] at (0.21, -0.04) {$3\theta+5\gamma-7$};
		
	\node[blue, rotate=0] at (0.29, 0) {(};
	\node[blue, rotate=0] at (0.45, 0) {)};
	
	\node[blue] at (0.29, 0.04) {$3-\theta-2\gamma$};
	\node[blue] at (0.45, 0.04) {$\theta+4\gamma-4$};
	
	\node[blue, rotate=0] at (0.55, 0) {(};
	\node[blue, rotate=0] at (0.71, 0) {)};
	
	\node[blue] at (0.55, -0.04) {$5-\theta-4\gamma$};
	\node[blue] at (0.71, -0.04) {$\theta+2\gamma-2$};
	
	\node[red, rotate=0] at (0.79, 0) {(};
	\node[red, rotate=0] at (0.95, 0) {)};
	
	\node[red] at (0.79, 0.04) {$8-3\theta-5\gamma$};
	\node[red] at (0.95, 0.04) {$\theta+\gamma-1$};
\end{tikzpicture}
\caption{Range of type II sums}\label{fig1}
\end{figure}

When $\theta \in (10/11,1)$, we suppose that
\begin{align} \label{condition0}
3\theta+5\gamma-7 > 8-3\theta-5\gamma \Rightarrow \frac{15-6\theta}{10} < \gamma < 1
\end{align}
to make sure the rightmost interval and the leftmost interval in Figure~\ref{fig1} intersect. Hence we take
$$
a = 2\gamma-\frac{4}{3}, \quad b=2-\theta-\gamma \quad \text{and}\quad c=\theta +\gamma-1. 
$$
By Proposition~\ref{lem:type1}, \ref{lem:type2_1} and Lemma~\ref{lem:HB indentity}, we have that Theorem~\ref{thm1} holds provided that
\begin{equation*}\label{condition2}
	\left\{\begin{array}{ll}
		2-\theta-\gamma < \frac{2}{3}, & \\
		1 - (\theta +\gamma-1) < (\theta +\gamma-1) - (2-\theta-\gamma), & \\
		1 - (2\gamma-\frac{4}{3}) < \frac{1}{2} (\theta +\gamma-1). &
	\end{array}\right.
\end{equation*}
It follows that
\begin{align}\label{condition3}
\frac{17-3\theta}{15} < \gamma < 1.
\end{align}
Finally, for $\theta \in (40/51,10/11]$, we use both Proposition~\ref{lem:type2_2} and \ref{lem:type2_1}. We suppose that
$$
\left\{\begin{array}{ll}
	3\theta+5\gamma-7 > 3-\theta-2\gamma, & \\
	\theta+4\gamma-4 > 5-\theta-4\gamma, & \\
	\theta+2\gamma-2 > 8-3\theta-5\gamma &
\end{array}\right.
$$
to make sure the four intervals in Figure~\ref{fig1} connect to form one interval. It follows that
\begin{align}\label{condition4}
\max \Big( \frac{9-2\theta}{8} , \frac{10-4\theta}{7} \Big) < \gamma <1.
\end{align}
Hence we also take
$$
a = 2\gamma-\frac{4}{3}, \quad b=2-\theta-\gamma \quad \text{and}\quad c=\theta +\gamma-1 
$$
in this case. By Proposition~\ref{lem:type1}, \ref{lem:type2_2}, \ref{lem:type2_1} and Lemma~\ref{lem:HB indentity}, we have Theorem~\ref{thm1} holds provided that \eqref{condition3} holds. Now combine the three cases and the conditions \eqref{condition of type2_1}, \eqref{condition of type2_2}, \eqref{condition1}, \eqref{condition0}, \eqref{condition3} and \eqref{condition4} to obtain that
$$
\sum_{h \sim H} \Big| \sum_{n\in I} \Lambda(n) \e(hn^\gamma) \Big| \ll x^{\theta-\varepsilon}
$$ 
for $\gamma$ satisfying conditions in \eqref{eq:thm1}. Hence we finish the proof of Theorem~\ref{thm1}.

\section{A lower bound result: Proof of Theorem \ref{thm2}} \label{sec:lower bound}
The aim of this section is to describe how we employ Harman sieve (see \cite{BaHaRi}) to our problem. We combine results on exponential sums in Section~\ref{Estimate of Type I Sum} and \ref{Estimate of Type II Sum} with an alternative sieve. This allows us to get a formula without an asymptotic result but with a larger value of $\gamma$ when $\theta \in (2/3,1)$. In this section, we always suppose that $\theta \in (2/3,1)$.

\subsection{The Fundamental Lemma}
We define
$$
\cA \defeq \{n\in \NN: n \in \cN_c \cap I\} \quad \text{and} \quad \cB \defeq \{ n \in \NN: n \in I \}.
$$
By type I and type II information, we establish a fundamental lemma of Piatetski-Shapiro sequence in short interval similar to \cite[Theorem~3.1]{Ha} or \cite[Lemma~5.4]{SDP}. Since in our problem $\theta>7/12=0.58...$, the proof is done if we convert the set $\cA$ to the set $\cB$ by applying Huxley's theorem to $\cB$. To establish the fundamental lemma, the following lemmas are needed. We mention that the ideas of our arguments follow from \cite[Lemma~7-10]{Kum}.

\begin{lemma}\label{harman sieve type I II information}
Let $\varepsilon>0$ be small, $2/3 < \theta < 1$ and $a_n,b_m \ll x^{\varepsilon}$. For
$$
N \ll x^{2\gamma -\frac{4}{3} -\varepsilon},
$$
we have 
\begin{align}\label{harman sieve type I}
\sum_{n \sim N} \sum_{\substack{m \\ mn \in \cA}} a_n  = \sum_{n \sim N} \sum_{\substack{m \\ mn \in \cB}} a_n \gamma (mn)^{\gamma-1} + O(x^{\theta + \gamma - 1 -\varepsilon}).
\end{align}
provided that \eqref{condition of type1} holds.
Moreover, if
$$
x^{5-\theta-4\gamma +\varepsilon}\ll N \ll x^{\theta+2\gamma-2-\varepsilon} \quad \text{or} \quad x^{3-\theta-2\gamma+\varepsilon} \ll N \ll x^{\theta + 4\gamma -4-\varepsilon},
$$
then
\begin{align}\label{harman sieve type II_2}
	\sum_{n \sim N} \sum_{\substack{m \\ mn \in \cA}} a_n b_m  = \sum_{n \sim N} \sum_{\substack{m \\ mn \in \cB}} a_n b_m \gamma (mn)^{\gamma-1} + O(x^{\theta + \gamma - 1 - \varepsilon}),
\end{align}
provided that \eqref{condition of type2_2} holds. For $3/4 < \theta <1$ and
$$
x^{2-\theta-\gamma +\varepsilon}\ll N \ll x^{3\theta+5\gamma-7-\varepsilon} \quad \text{or} \quad x^{8-3\theta-5\gamma+\varepsilon} \ll N \ll x^{\theta + \gamma -1-\varepsilon},
$$ 
we still have
\begin{align}\label{harman sieve type II_1}
	\sum_{n \sim N} \sum_{\substack{m \\ mn \in \cA}} a_n b_m  = \sum_{n \sim N} \sum_{\substack{m \\ mn \in \cB}} a_n b_m \gamma (mn)^{\gamma-1} + O(x^{\theta + \gamma - 1 - \varepsilon}),
\end{align}
provided that \eqref{condition of type2_1} holds.
\end{lemma}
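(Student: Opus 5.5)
The plan is to expand the indicator of $\cA$ using Lemma~\ref{lem:PS}. For $k=mn\in I$ we have
$$
\ind{\cA}(k)=\gamma k^{\gamma-1}+\psi(-(k+1)^\gamma)-\psi(-k^\gamma)+O(x^{\gamma-2}).
$$
The first term gives exactly the main term on the right-hand side of \eqref{harman sieve type I}, \eqref{harman sieve type II_2} and \eqref{harman sieve type II_1}. The error $O(x^{\gamma-2})$, summed over at most $x^{\theta+\varepsilon}$ pairs $(m,n)$ with $mn\in I$ (counted with weights $\ll x^{2\varepsilon}$), contributes $O(x^{\theta+\gamma-2+3\varepsilon})$. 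This is admissible. It therefore suffices to show
$$
\sum_{n\sim N}\sum_{\substack{m\\ mn\in I}} a_nb_m\big(\psi(-(mn+1)^\gamma)-\psi(-(mn)^\gamma)\big)\ll x^{\theta+\gamma-1-\varepsilon},
$$
with $b_m=1$ in the type I case.

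Next I would apply Vaaler's approximation (Lemma~\ref{lem:Vaaler}) with $H=x^{1-\gamma+\varepsilon}$. This writes the sum as $T_1+O(T_2)$, where
$$
T_1=\sum_{1\le|h|\le H}a_h\sum_{n,m}a_nb_m\big(\e(h(mn+1)^\gamma)-\e(h(mn)^\gamma)\big).
$$
The error term $T_2$ has the same shape but with $b_h\ll 1/H$ and a plus sign.

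\emph{The $h=0$ part of $T_2$} is $O(x^{\theta+2\varepsilon}/H)=O(x^{\theta+\gamma-1+\varepsilon})$. This is slightly too big, so I would take $H=x^{1-\gamma+2\varepsilon}$ and shrink the other $\varepsilon$'s accordingly; this is the usual bookkeeping.

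\emph{The $h\neq0$ part of $T_2$} is handled by splitting $h$ into dyadic ranges $h\sim H_1\le H$. The factor $1/H$ is at most $1/H_1$, so each block is controlled by $H_1^{-1}$ times a sum of the form \eqref{eq:tyI} or \eqref{eq:tyII}, where $\delta_h$ absorbs the phase.

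\emph{The sum $T_1$.} Write $\e(h(k+1)^\gamma)-\e(hk^\gamma)=\e(hk^\gamma)\phi_h(k)$ with $\phi_h(k)=\e(h((k+1)^\gamma-k^\gamma))-1$. This satisfies
$$
\phi_h(k)\ll |h|x^{\gamma-1},\qquad \partial_k\phi_h(k)\ll |h|x^{\gamma-2}.
$$
I would remove $\phi_h$ by partial summation in the variable $k=mn$. Because $m$ and $n$ are entangled through $mn\in I$, this is awkward. The alternative I prefer is to write $\phi_h(mn)$ as an integral of its derivative from $x$ and use $\phi_h(x)\ll |h|x^{\gamma-1}$. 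This leaves sums of the type
$$
\sum_{n,m:\,x<mn\le t}a_nb_m\e(h(mn)^\gamma),\qquad t\in I,
$$
which are exactly the sums in Propositions~\ref{lem:type1}, \ref{lem:type2_2} and \ref{lem:type2_1} with $I$ replaced by the subinterval $(x,t]$. Those proofs only used that the interval has length at most $y$, so their bounds still hold uniformly in $t$.

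Consequently each dyadic block $h\sim H_1$ contributes $\ll H_1^{-1}\cdot H_1x^{\gamma-1}\cdot x^{\theta-\varepsilon}$, counting $a_h\ll 1/H_1$, the factor $|h|x^{\gamma-1}$ from $\phi_h$, and the exponential-sum bound $S\ll x^{\theta-\varepsilon}$. Summing over $O(\log x)$ dyadic blocks gives $x^{\theta+\gamma-1-\varepsilon/2}$. The $T_2$ part with $h\ne0$ is similarly $\ll x^{\theta-\varepsilon}/H_1\le x^{\theta+\gamma-1-\varepsilon}$. The three claims \eqref{harman sieve type I}, \eqref{harman sieve type II_2} and \eqref{harman sieve type II_1} then follow from Propositions~\ref{lem:type1}, \ref{lem:type2_2} and \ref{lem:type2_1} respectively, under the stated ranges of $N$ and conditions on $\gamma$.

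One more point: the propositions require the dyadic top $H_1\le x^{1-\gamma+\varepsilon}$, and smaller $H_1$ is fine because their bounds are monotone in $H$. Negative $h$ are handled by complex conjugation.

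The main obstacle is the partial summation step, which removes the smooth weight $\phi_h(mn)$ across the entangled condition $mn\in I$. It must be checked that the propositions hold uniformly for every subinterval $(x,t]\subset I$, with bounds independent of $t$. I would verify this by inspecting that the proofs only use the length bound $|I|\le y$ and the size $mn\asymp x$.
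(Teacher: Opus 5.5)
Your proposal is correct and follows the paper's overall structure: Lemma~\ref{lem:PS}, Vaaler with $H\approx x^{1-\gamma}$, the factor $\phi_h$, partial summation in $k=mn$, and a trivial treatment of $h=0$. The one real difference is how you bound the integral term produced by partial summation.

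\textbf{The integral term.} The paper enlarges $\max_{x<t\le x+y}$ to $\max_{x<t\le 2x}$ and quotes long-interval Piatetski-Shapiro estimates (Rivat--Sargos). This works because the term carries the factor $yx^{\gamma-2}$ instead of $x^{\gamma-1}$, so only $\sum_h|\sum_{x<k\le t}f(k)\e(hk^\gamma)|\ll x^{1-\varepsilon}$ is needed. You instead reapply Propositions~\ref{lem:type1}, \ref{lem:type2_2} and \ref{lem:type2_1} on the subintervals $(t,x+y]$.

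The paper's route avoids re-examining its own proofs. However, it leans on external type I/II ranges that are not checked to cover these $N$ and these coefficient sequences. Your route is self-contained, and the uniformity you flag does hold:
\begin{itemize}
\item In all three proofs, every bound on the innermost sum is non-decreasing in its length: the $|I|\lambda^{1/6}$, $|I|^{3/4}$, $|I|^{1/4}\lambda^{-1/4}$ terms; $|I|\lambda^{1/2}+\lambda^{-1/2}$; and $\min(|I|,\dots)$.
\item The number of outer variables ($HN$ pairs $(h,n)$, or $\ll Qx/N$ pairs $(q,m)$) does not depend on that length.
\item With $Q\le y/(2N)$ kept fixed, the A-process factor $(|I'|+Q)/Q$ is at most the one for $I$.
\end{itemize}
Integrate over $t$ before taking a supremum, so that the subinterval is common to all $h$, as the propositions require.

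\textbf{The $h=0$ term.} You are right that this term forces $H$ to be slightly larger than $x^{1-\gamma+\varepsilon}$ relative to the $\varepsilon$ in $a_n,b_m\ll x^{\varepsilon}$. The paper's claim $\frac1H\sum_{k}f(k)\ll x^{\theta+\gamma-1-\varepsilon}$ overlooks this.

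\textbf{Two fixes in the $h\ne0$ part of $T_2$.}
\begin{itemize}
\item Do not weaken $b_h\ll 1/H$ to $1/H_1$. The inequality $x^{\theta-\varepsilon}/H_1\le x^{\theta+\gamma-1-\varepsilon}$ you then assert is false for small dyadic blocks $H_1$. Keeping $1/H$ gives $\ll x^{\theta-\varepsilon}\log x/H$, which suffices.
\item The piece $\e(h(mn+1)^\gamma)$ is not literally of the form \eqref{eq:tyII}. Remove the unimodular factor $\e(h((k+1)^\gamma-k^\gamma))$, whose derivative is $\ll |h|x^{\gamma-2}$, by the same partial summation before invoking the propositions.
\end{itemize}
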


\begin{proof}
We give a proof on type II sum and omit the proof on type I sum since methods on type I sum are similar and simpler. By Lemma~\ref{lem:PS}, we have
\begin{align*}
\sum_{n \sim N} \sum_{\substack{m \\ mn \in \cA}} a_n b_m &= \sum_{n \sim N} \sum_{\substack{m \\ mn \in \cB}} a_n b_m  \Big( \fl{-(mn)^{\gamma}} - \fl{-(mn+1)^{\gamma}} \Big) \\
&= \Sigma_1 + \Sigma_2,
\end{align*}
where 
\begin{align*}
\Sigma_1 \defeq \sum_{n \sim N} \sum_{\substack{m \\ mn \in \cB}} a_n b_m \gamma (mn)^{\gamma-1} + O(x^{\varepsilon})
\end{align*}
and
\begin{align*}
\Sigma_2 \defeq \sum_{n \sim N} \sum_{\substack{m \\ mn \in \cB}} a_n b_m \Big( \psi(-(mn+1)^{\gamma}) - \psi(-(mn)^{\gamma}) \Big). 
\end{align*}
It is sufficient to prove that $\Sigma_2 \ll x^{\theta + \gamma - 1 - \varepsilon}$. Let
$$
f(k) \defeq  \sum_{n \sim N} \sum_{\substack{m \\ mn = k }} a_n b_m.
$$
By the well-known Fourier expansion (Lemma~\ref{lem:Vaaler}), we write that
\begin{align*}
\Sigma_2 &= \Sigma_{21} + O(\Sigma_{22}),
\end{align*}
where 
\begin{align*}
\Sigma_{21} &\defeq \sum_{1 \leqslant |h| \leqslant H} c_h \sum_{\substack{k \in \cB}} f(k)\big( \e(h(k+1)^{\gamma}) - \e(hk^{\gamma}) \big), \\
\Sigma_{22} &\defeq \sum_{0 \leqslant |h| \leqslant H} c'_h \sum_{\substack{k \in \cB}} f(k)\big( \e(h(k+1)^{\gamma}) + \e(hk^{\gamma}) \big),  
\end{align*}
with $H = x^{1 - \gamma +\varepsilon}$ and $c_h \ll 1/|h|, c'_h \ll 1/H$. 

Firstly, we consider the upper bound of $\Sigma_{21}$. Define
$$
\phi_h(t) \defeq \e\big(h((t+1)^{\gamma}-t^{\gamma})\big)-1. 
$$
It follows that
\begin{align*}
\Sigma_{21} = \sum_{1 \leqslant |h| \leqslant H} c_h \sum_{\substack{k \in \cB}} f(k) \phi_h(k) \e(hk^{\gamma}).
\end{align*}
Combining with the upper bound $c_h \ll 1/|h|$ yields
\begin{align*}
\Sigma_{21} \ll \sum_{1 \leqslant |h| \leqslant H} \frac{1}{|h|} \Big| \sum_{\substack{k \in \cB}} f(k) \phi_h(k) \e(hk^{\gamma}) \Big|.
\end{align*}
It follows from partial summation and the bounds
$$
\phi_h(t) \ll |h|t^{\gamma-1} \quad \text{and} \quad \frac{\partial \phi_h(t)}{\partial t} \ll |h|t^{\gamma-2}
$$
that
\begin{align}\label{sigma21}
\nonumber
\Sigma_{21} &\ll \sum_{1 \leqslant |h| \leqslant H} \frac{1}{|h|} \Big| \int_{x}^{x+y} \phi_h(t) d\Big( \sum_{\substack{x < k \leqslant t}} f(k) \e(hk^{\gamma}) \Big) \Big| \\
\nonumber
&\ll \sum_{1 \leqslant |h| \leqslant H} \frac{1}{|h|} \Big| \phi_h(x+y) \sum_{\substack{x < k \leqslant x+y}} f(k) \e(hk^{\gamma})\Big| \\
\nonumber
&\quad + \int_{x}^{x+y} \sum_{1 \leqslant |h| \leqslant H} \frac{1}{|h|} \Big| \frac{\partial \phi_h(t)}{\partial t} \Big| \Big| \sum_{\substack{x < k \leqslant t}} f(k) \e(hk^{\gamma}) \Big|dt \\
\nonumber
&\ll  x^{\gamma-1} \sum_{1 \leqslant |h| \leqslant H} \Big| \sum_{\substack{x < k \leqslant x+y}} f(k) \e(hk^{\gamma}) \Big| \\
&\quad + x^{\theta+\gamma-2} \max_{x < t \leqslant x+y} \sum_{1 \leqslant |h| \leqslant H} \Big| \sum_{\substack{x < k \leqslant t}} f(k) \e(hk^{\gamma}) \Big|.
\end{align}
Here the last term of \eqref{sigma21} is
$$
\ll x^{\theta+\gamma-2} \max_{x < t \leqslant 2x} \sum_{1 \leqslant |h| \leqslant H} \Big| \sum_{\substack{x < k \leqslant t}} f(k) \e(hk^{\gamma}) \Big|.
$$
According to the previous results of primes in Piatetski-Shapiro sequence, i.e. \cite{RiSa}, we see that the above sum is $O(x^{\theta+\gamma-1-\varepsilon})$ for $2817/2426<\gamma<1$ which is admissible for Theorem~\ref{thm1} and \ref{thm2}. Hence we obtain that 
$$
\Sigma_{21} \ll x^{\gamma-1} \sum_{1 \leqslant |h| \leqslant H} \delta_h \sum_{n \sim N}\sum_{\substack{m \\ x < mn \leqslant x+y }} a_n b_m \e(h(mn)^{\gamma}) + x^{\theta+\gamma-1-\varepsilon}
$$
for some $\delta_h \in \CC$ with $|\delta_h|=1$.
By Proposition~\ref{lem:type2_2} and \ref{lem:type2_1}, we obtain that $\Sigma_{21} = O(x^{\theta + \gamma - 1 - \varepsilon})$. As for $\Sigma_{22}$, the situation is similar to $\Sigma_{21}$ for $|h| \geqslant 1$ in $\Sigma_{22}$. The contribution of $h=0$ of $\Sigma_{22}$ is
$$
\ll \frac{1}{H} \sum_{k \in \cB} f(k) \ll x^{\theta + \gamma - 1 - \varepsilon}.
$$
This completes the proof.
\end{proof}

\begin{lemma}\label{a_mn}
Let $I$, $J$ be integers and $\cI_i$, $\cJ_j$ be intervals for $1 \leqslant i \leqslant I$, $1 \leqslant j \leqslant J$. Write
\begin{align}\label{definition of a_mn}
a_{mn} \defeq \sum_{\substack{kp_1 \cdots p_l = n \\ p_1 < p_2 < \cdots < p_l \\ p_i \in \cI_i}} c_{n}
\sum_{\substack{lq_1 \cdots q_j = m \\ q_1 < q_2 < \cdots < q_j \\ q_j \in \cJ_j}} d_{m}
\end{align}
with $|c_n|$, $|d_m| \leqslant x^{\varepsilon}$ and $p_1, \ldots, p_l$ and $q_1, \ldots, q_j$ satisfying $t$ joint conditions of the form
$$
p_u \leqslant q_v \quad \text{or} \quad q_v \leqslant p_u
$$
or
$$
\prod_{u \in U} p_u\prod_{v \in V} q_v \leqslant H \quad \text{or} \quad \prod_{u \in U} p_u \geqslant \prod_{v \in V} q_v
$$
or similar (for given $U \subset \{1, \ldots, I \}$, $V \subset \{1, \ldots, J\}$ and $H\leqslant x$). Suppose that $\varepsilon, \gamma, \theta, N$ satisfy the conditions of Proposition~\ref{lem:type2_2} or Proposition~\ref{lem:type2_1}, then
$$
\sum_{n \sim N} \sum_{\substack{m \\ mn \in \cA}} a_{mn} = \sum_{n \sim N} \sum_{\substack{m \\ mn \in \cB}} a_{mn} \gamma(mn)^{\gamma-1} + O(x^{\theta + \gamma - 1 -\varepsilon}).
$$
\end{lemma}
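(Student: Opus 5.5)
The plan is to reduce the lemma to the type II estimate \eqref{harman sieve type II_2} (or \eqref{harman sieve type II_1}) of Lemma~\ref{harman sieve type I II information}. That requires removing the joint conditions linking the primes $p_u$ in the $n$-variable to the primes $q_v$ in the $m$-variable, since these are the only features that keep $a_{mn}$ from factoring as $a_n b_m$. I will follow the standard device of Harman (cf. \cite[Lemma~7--10]{Kum}) and separate the variables with the truncated Perron-type formula, Lemma~\ref{lemma:perron}.

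First, each condition $p_u \le q_v$ (or $\prod_{u\in U}p_u\prod_{v\in V}q_v \le H$, etc.) can be written as $\alpha<\beta$ with $\alpha=\log p_u$ and $\beta=\log q_v$, or with $\alpha=\sum\log p_u$ and $\beta=\log H-\sum\log q_v$. After shifting $\beta$ by $1/2$ on the logarithmic scale of the relevant integer products (say replace $\beta$ by $\log(q_v+1/2)$), equality never occurs and $|\beta-\alpha|\gg x^{-1}$. Lemma~\ref{lemma:perron} then gives
\begin{equation*}
\ind{\alpha<\beta}=\frac{1}{\pi}\int_{-T}^{T}e^{i\alpha t}\frac{\sin \beta t}{t}\,dt+O\big(T^{-1}|\beta-\alpha|^{-1}\big).
\end{equation*}
Here $e^{i\alpha t}=p_u^{it}$ depends only on $n$. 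Expanding $\sin\beta t=(e^{i\beta t}-e^{-i\beta t})/2i$ gives factors $(q_v+1/2)^{\pm it}$, which depend only on $m$. Doing this for each of the $t$ conditions, and multiplying the resulting integrals, writes $a_{mn}$ as a $t$-fold integral over $[-T,T]^t$ of expressions $a_n(\mathbf{t})b_m(\mathbf{t})$ with $|a_n(\mathbf t)|,|b_m(\mathbf t)|\ll x^{\varepsilon}$, against a measure of total mass $\ll (\log T)^t$. I will take $T=x^{A}$ with $A$ large.

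Next, for each fixed $\mathbf t$ I apply \eqref{harman sieve type II_2} or \eqref{harman sieve type II_1}, according to whether $N$ lies in the range of Proposition~\ref{lem:type2_2} or Proposition~\ref{lem:type2_1}. The error term $O(x^{\theta+\gamma-1-\varepsilon})$ is uniform in $\mathbf t$, because the proofs only use $|a_n|,|b_m|\ll x^\varepsilon$. Integrating over $\mathbf t$ costs only $(\log x)^t$, which is absorbed by shrinking $\varepsilon$. The same integral representation, applied to $\cB$ with weight $\gamma(mn)^{\gamma-1}$, then reassembles the main term.

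The main obstacle is the Perron error terms. These are small except when $|\beta-\alpha|$ is tiny. The total contribution, counted on both $\cA$ and $\cB$, is bounded by
\begin{equation*}
x^{\varepsilon}\sum_{k\in I}\tau_{r}(k)\min\big(1,T^{-1}|\beta-\alpha|^{-1}\big).
\end{equation*}
Because of the half-integer shift, $|\beta-\alpha|\gg x^{-1}$ for every term, so with $T=x^{2+\varepsilon}$ this contribution is $\ll x^{\varepsilon}\cdot y\cdot x^{-1-\varepsilon}$, which is negligible. The delicate step is arranging the shift so that it leaves the counted set unchanged. This holds since $p_u\le q_v$ is equivalent to $p_u<q_v+1/2$ for integers, and products are handled the same way. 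The conditions described as ``or similar'' are treated in the same way.
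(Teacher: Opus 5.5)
Your argument is the paper's. You remove each joint condition with Lemma~\ref{lemma:perron}, taking $\beta=\log(q_v+\frac12)$ and $T$ a power of $x$. You then apply \eqref{harman sieve type II_2} or \eqref{harman sieve type II_1} for each fixed value of the integration variables, and reassemble the $\cB$-side main term with the same formula. Your explicit bound on the truncation error via $|\beta-\alpha|\gg x^{-1}$ is what the paper compresses into its $O(1)$ terms.

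One step fails as written, however. After you expand $\sin\beta t=(e^{i\beta t}-e^{-i\beta t})/2i$ and move $(q_v+\frac12)^{\pm it}$ into $b_m$, the remaining measure is $dt/t$. Its total mass on $[-T,T]$ is infinite, not $\ll\log T$, because $\int_{-T}^{T}|dt|/|t|=\infty$. The integral then exists only as a principal value. The pointwise errors $O(x^{\theta+\gamma-1-\varepsilon})$ from the type II estimate carry no factor $|t|$, so integrating them against $|dt|/|t|$ diverges at $t=0$.

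The paper avoids this by not expanding: it keeps $\sin(t\log(q_v+\frac12))/t$ together as the $m$-dependent factor. Since $\beta\le\log 2x$, you have $|\sin(\beta t)/t|\le W(t):=\min(\log 2x,|t|^{-1})$. So you can write this factor as $W(t)$ times a coefficient of modulus at most $1$. Lemma~\ref{harman sieve type I II information} only needs $|b_m|\ll x^{\varepsilon}$, not the product structure $(q_v+\frac12)^{it}$. Also $\int_{-T}^{T}W(t)\,dt\ll\log(xT)$, so the $t$-fold integration costs only $(\log x)^{O(t)}$, as you intended. Alternatively, keep your expansion, split off a short range around $t=0$, and bound that range trivially using the unexpanded sine.

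A smaller point concerns the product conditions. Lemma~\ref{lemma:perron} requires $\alpha,\beta>0$, but your choice $\beta=\log H-\sum_{v\in V}\log q_v$ can be negative. For $\alpha>0$, the untruncated integral equals $\frac12\bigl(\mathrm{sgn}(\beta+\alpha)+\mathrm{sgn}(\beta-\alpha)\bigr)$. This is $-1$ rather than $0$ whenever $H\prod_{u\in U}p_u<\prod_{v\in V}q_v$. To fix this, first impose $\prod_{v\in V}q_v\le H$, which is a condition on $m$ alone. After that, $\beta>0$ once you make the half-integer shift.
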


\begin{proof}
In order to remove the condition $p_u \leqslant q_v $ we apply Lemma~\ref{lemma:perron} with $\alpha = \log p_u$, $\beta = \log\bigl(q_v + \tfrac{1}{2}\bigr)$ and $T = x^2$. We find
\begin{align*}
	&\quad\frac{1}{\pi} \int_{-T}^{T} \sum_{n \sim N} \sum_{\substack{m \\ mn \in \cA}} a_1(m, n, y) \, \frac{dy}{y} \\
	&= \sum_{n \sim N} \sum_{\substack{m \\ mn \in \cA}} a_1(m,n) \cdot \frac{1}{\pi} \int_{-T}^{T} e^{iy\log p_u} \frac{\sin y(\log(q_v+\frac{1}{2}))}{y} \, dy \\
	&= \sum_{n \sim N} \sum_{\substack{m \\ mn \in \cA}} a_{mn} + O(1).
\end{align*}
where
\begin{equation*}
	a_1(m, n, y) = a_1(m,n) \, p_u^{iy} \sin \bigl(y \log(q_v + \tfrac{1}{2})\bigr)
\end{equation*}
and $ a_1(m, n)$ is the same as $a_{mn}$ but with the condition $p_u \leqslant q_v$ removed. Applying this procedure $t$ times we obtain that
\begin{align}\label{amn to amny}
	\sum_{n \sim N} \sum_{\substack{m \\ mn \in \cA}} a_{mn} = \frac{1}{\pi^t} \int_{-T}^{T} \!\!\cdots\ \!\! \int_{-T}^{T} \sum_{n \sim N} \sum_{\substack{m \\ mn \in \cA}} a^*_1(m, n, \mathbf{y}) \, \frac{dy_1 \cdots dy_t}{y_1 \cdots y_t} + O(1)
\end{align}
where $a^*_1(m, n, \mathbf{y})$ is defined similarly to $a_{mn}$  but with all the joint conditions removed, so that
\begin{equation*}
	a^*_1(m, n, \mathbf{y}) = a(m,\mathbf{y} ) \, b(n, \mathbf{y}),
\end{equation*}
where $a(m,\mathbf{y} )$ and $b(n, \mathbf{y})$ are defined similar to $a_1(m,n,y)$. Therefore, we apply \eqref{harman sieve type II_2} or \eqref{harman sieve type II_1}  to the last sum in \eqref{amn to amny}. We get
\begin{align*}
&\frac{1}{\pi^t} \int_{-T}^{T} \!\!\cdots\ \!\! \int_{-T}^{T} \sum_{n \sim N} \sum_{\substack{m \\ mn \in \cA}}  \frac{a(m, \mathbf{y}) \, b(n, \mathbf{y})}{y_1 \cdots y_t} \, dy_1 \cdots dy_t \\
&= \frac{1}{\pi^t} \int_{-T}^{T} \!\!\cdots\ \!\! \int_{-T}^{T} \sum_{n \sim N} \sum_{\substack{m \\ mn \in \cA}}  \frac{a(m, \mathbf{y}) \, b(n, \mathbf{y})}{y_1 \cdots y_t} \, \gamma(mn)^{\gamma-1} \, dy_1 \cdots dy_t + O\bigl(  x^{\theta+\gamma-1-\varepsilon}\bigr).
\end{align*}
Next applying Lemma~\ref{lemma:perron} $t$ times once again, we finally find
\begin{align*}
&\frac{1}{\pi^t} \int_{-T}^{T} \!\!\cdots\ \!\! \int_{-T}^{T} \sum_{n \sim N} \sum_{\substack{m \\ mn \in \cA}}  \frac{a(m, \mathbf{y}) \, b(n, \mathbf{y})}{y_1 \cdots y_t} \, \gamma(mn)^{\gamma-1} \, dy_1 \cdots dy_t \\
&= \sum_{n \sim N} \sum_{\substack{m \\ mn \in \cA}}  a(m, n) \, \gamma(mn)^{\gamma-1} + O(1).
\end{align*}
\end{proof}

We give the fundamental lemma as followed. Define for $n \in \NN$,
$$
\cS(\cA_n,z) \defeq \sum_{\substack{m \\ mn\in\cA}} \rho(m,z) \quad \text{and} \quad \cS(\cB_n,z) \defeq \sum_{\substack{m \\ mn\in\cB}} \rho(m,z), 
$$
where $\rho(n,z) \defeq \textbf{1}_{p|n \Rightarrow p\geqslant z}$. Thus we have
\begin{lemma}[Fundamental Lemma]\label{The Fundamental Lemma}
Let $u \geqslant 1$ be an integer, $\varepsilon > 0$ be a small positive number. 
	
(i) Let $p_1, \ldots, p_u \in [1, x]$ be such that $\prod_{1 \leqslant k \leqslant u} p_k \leqslant x$. Suppose that $\varepsilon, \gamma, \theta, N$ satisfy the conditions of Proposition~\ref{lem:type2_2} or Proposition~\ref{lem:type2_1} and there exists $\cD \subset \{1, \ldots, u\}$ such that  
$$\prod_{k \in D} p_k \asymp N.$$
Then  
\begin{equation}\label{fundamental lemma 1}
\sum_{\substack{ p_1 , \cdots , p_u  }} \cS(\cA_{p_1, \cdots, p_u}, p_1) = \gamma x^{\gamma-1} \sum_{\substack{p_1 , \cdots , p_u }} \cS(\cB_{p_1, \cdots, p_u}, p_1) + O(x^{\theta + \gamma - 1 - \varepsilon}).
\end{equation}

(ii)For any $a_n \ll x^{\varepsilon}$, if $\varepsilon, \gamma, \theta$ satisfy the conditions of Proposition~\ref{lem:type1} and satisfy the conditions of Proposition~\ref{lem:type2_1} or Proposition~\ref{lem:type2_2} we have
\begin{equation}\label{fundamental lemma 2}
\sum_{n \sim N} a_n \cS(\cA_n, z) = \gamma x^{\gamma-1} \sum_{n \sim N} a_n \cS(\cB_n, z) + O(x^{\theta + \gamma - 1 - \varepsilon} ),
\end{equation}
where 
$$
N \ll x^{3\theta + 5\gamma -7 -\varepsilon} \quad \text{and} \quad z = x^{4\theta+6\gamma-9-\varepsilon}
$$ 
for $\varepsilon, \gamma, \theta$ satisfying the conditions of Proposition~\ref{lem:type1} and \ref{lem:type2_1}, while 
$$
N \ll x^{\theta + 4\gamma -4-\varepsilon} \quad \text{and} \quad z = x^{2\theta+6\gamma-7-\varepsilon}
$$ 
when $\varepsilon, \gamma, \theta$ satisfy the conditions of Proposition~\ref{lem:type1} and Proposition~\ref{lem:type2_2}.
\end{lemma}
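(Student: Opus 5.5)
We prove the Fundamental Lemma in the standard Harman-sieve way (cf.\ \cite[Theorem~3.1]{Ha}, \cite[Lemmas~7--10]{Kum}): reduce everything to the type I and type II asymptotics of Lemma~\ref{harman sieve type I II information} and Lemma~\ref{a_mn}, with the weight $\gamma(mn)^{\gamma-1}$ then replaced by $\gamma x^{\gamma-1}$.

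Part (i) should be a direct corollary of Lemma~\ref{a_mn}. First write
\[
\cS(\cA_{p_1\cdots p_u},p_1)=\sum_{m:\ mp_1\cdots p_u\in\cA}\rho(m,p_1).
\]
Next group the primes $p_k$, $k\in\cD$, into the variable $n\asymp N$, and put the remaining primes together with $m$ into the complementary variable. The coupling conditions are of the allowed shapes. The ordering between the primes and the condition $\rho(m,p_1)$, i.e.\ every prime factor of $m$ is at least $p_1$, are joint conditions $p_u\le q_v$. Writing $m$ through its prime factorisation (at most $O(1/\varepsilon)$ factors after excluding a negligible set), these are finitely many conditions. Lemma~\ref{a_mn} then gives the asymptotic with weight $\gamma(mn)^{\gamma-1}$, after a dyadic splitting of $N$ that costs only a $\log x$ factor. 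Since $mn\in I=(x,x+y]$, we have
\[
(mn)^{\gamma-1}=x^{\gamma-1}\bigl(1+O(x^{\theta-1})\bigr),
\]
and the error term $x^{\gamma-1}\cdot x^{\theta-1}\cdot y\,x^{\varepsilon}$ is acceptable.

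For part (ii) I would use Buchstab's identity iterated in the Harman form. Write
\[
\cS(\cA_n,z)=\sum_{d\mid P(z)}\mu(d)\,\#\{m: mnd\in\cA\}
\]
in the range $d\le D$, and expand the rest by Buchstab:
\[
\cS(\cA_n,z)=\cS(\cA_n,w)-\sum_{w\le p<z}\cS(\cA_{np},p),
\]
with $w$ tiny, say $x^{\varepsilon^2}$. Iterating splits the terms into two classes. In the first class, $nd$ (or $n p_1\cdots p_j$) stays below the type I range $x^{2\gamma-4/3-\varepsilon}$; these are handled by \eqref{harman sieve type I}. In the second class, some product $np_1\cdots p_j$ lands in the type II interval. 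The sizes are arranged for this. With $N\ll x^{3\theta+5\gamma-7}$ and every new prime below $z=x^{4\theta+6\gamma-9}$, the running product $np_1\cdots p_j$ increases in steps smaller than $z$. Note that
\[
z=x^{(3\theta+5\gamma-7)-(2-\theta-\gamma)}
\]
is exactly the length of the type II interval $[x^{2-\theta-\gamma},x^{3\theta+5\gamma-7}]$ from Proposition~\ref{lem:type2_1}. So a product that starts at most at $N$ and grows in steps smaller than $z$ cannot jump over that interval. Either it is still below $x^{2-\theta-\gamma}$, in the type I range (this needs $2-\theta-\gamma\le 2\gamma-4/3$, which follows from the hypotheses), or it falls in the type II window, where part (i) applies. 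Likewise $z=x^{2\theta+6\gamma-7}$ is the length of $[x^{3-\theta-2\gamma},x^{\theta+4\gamma-4}]$ for Proposition~\ref{lem:type2_2}. The terms with small $d$ and $w$ are treated with the fundamental-lemma sieve: use the type I asymptotic for $d\le x^{2\gamma-4/3}/N$, and absorb the $w$-rough remainder by the usual Buchstab iteration until every product has entered the type II window.

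I expect the main obstacle to be the bookkeeping that guarantees every term is captured by a type I or type II range, with no leftover. This means checking that the type I range reaches up to the lower end of the type II window under the stated hypotheses, and tracking the $O(\log x)^{O(1)}$ losses from the decompositions. Each step carries the same main-term transformation ($\cA\to\gamma x^{\gamma-1}\cB$), so the identities recombine into \eqref{fundamental lemma 2} once every piece has been given its asymptotic.
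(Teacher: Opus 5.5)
Part (i) is the paper's argument: you group the primes indexed by $\cD$ into $n$, separate the couplings with Lemma~\ref{a_mn}, and replace $(mn)^{\gamma-1}$ by $x^{\gamma-1}$. Your crossing observation in part (ii) is also exactly the paper's mechanism: $z$ is the ratio of the endpoints of the type II window, so a product grown by primes $<z$ cannot jump over it. Your check that type I only needs to reach $x^{2-\theta-\gamma}$ (resp.\ $x^{3-\theta-2\gamma}$), which follows from $\gamma>(8-3\theta)/6$, is correct.

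The gap is in how you close part (ii). You stop the Buchstab iteration at $w=x^{\varepsilon^2}$. The resulting leaves $\cS(\cA_k,w)$ are not type I sums, because $m$ is still sifted. You propose to handle them, equivalently the terms $\sum_{d\mid P(w)}\mu(d)\#\{m: mkd\in\cA\}$ with $kd$ beyond the type I range, by the fundamental lemma of sieve theory. That lemma only controls the error to a relative size of roughly $e^{-s}$, where $s=\log D/\log w$. With $w=x^{\varepsilon^2}$ and $D$ a fixed power of $x$, $s$ depends only on $\varepsilon$. The error is therefore a fixed small constant times the main term $\asymp x^{\theta+\gamma-1}/\log x$, not $O(x^{\theta+\gamma-1-\varepsilon})$. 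Your opening display also combines a M\"obius expansion truncated at $d\le D$ with Buchstab's identity for all of $\cS(\cA_n,z)$, so it is never specified what ``the rest'' is.

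What is missing is an exact decomposition with no sieve-theoretic error. There are two ways to get one.
\begin{itemize}
\item Run Buchstab all the way down ($w=2$). Every leaf is then either $\#\{m: mk\in\cA\}$ with $k<x^{2-\theta-\gamma}$, a genuine type I sum, or $\cS(\cA_k,p)$ with $k$ in the type II window.
\item Do what the paper does. Expand $\cS(\cA_n,z)=\sum_{d\mid P(z)}\mu(d)\#\{m: mnd\in\cA\}$ exactly and set $X=x^{3\theta+5\gamma-7-\varepsilon}$. Handle $nd\le X$ directly. For $nd>X$, peel the prime factors of $d$ off in decreasing order: $d=p_1d'$ with $d'\mid P(p_1)$, then $d'=p_2d''$, and so on, until $nd^{(j)}\le X<np_jd^{(j)}$. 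Since $p_j<z$, the variable $nd^{(j)}$ lies in $(X/z,X]$, which is a type II range. The coupling ($d^{(j)}\mid P(p_j)$ and the crossing inequality) amounts to $O(1)$ joint conditions, handled by Lemma~\ref{a_mn}, and the process stops after at most $\log x$ steps.
\end{itemize}

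Your reason for introducing $w$ was to keep the number of prime factors of $m$ bounded in part (i). That is unnecessary: $\rho(m,p)=1$ exactly when $p$ is at most the least prime factor of $m$, which is a single joint condition. This also removes the unjustified ``excluding a negligible set'' step.
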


\begin{proof}
We prove \eqref{fundamental lemma 1} first. By the definition of $\cS(\cA_n,z)$, we have
\begin{align*}
\sum_{\substack{p_1 , \cdots , p_u}} \cS(\cA_{p_1, \cdots, p_u}, p_1) = \sum_{\substack{p_1 , \cdots , p_u}} \sum_{\substack{s \\ p_1\cdots p_u s \in \cA \\ (s, P(p_1))=1}} 1.
\end{align*}
Let 
$$
n = \prod_{j\in \cD} p_j, \quad m = s\prod_{j \notin \cD} p_j.
$$
Hence we arrive at
\begin{align} \label{fundamental lemma 3}
\nonumber
&\sum_{\substack{p_1 , \cdots , p_u}} \cS(\cA_{p_1, \cdots, p_u}, p_1) \\
\nonumber
& = \sum_{n \sim N} \sum_{\substack{m \\ mn \in \cA \\ (m,P(p_1)) = 1}} \bigg( \sum_{\substack{\prod_{j\in \cD} p_j = n}} 1 \bigg) \bigg( \sum_{ s\prod_{j \notin \cD} p_j = m} 1 \bigg) \\
\nonumber
& = \sum_{n \sim N} \sum_{\substack{m \\ mn \in \cA }} \bigg( \sum_{\substack{\prod_{j\in \cD} p_j = n}} 1 \bigg) \bigg( \sum_{ s\prod_{j \notin \cD} p_j = m} \sum_{d|(m,P(p_1))} \mu(d) \bigg)\\
& \eqdef \sum_{n \sim N} \sum_{\substack{m \\ mn \in \cA }} a_{mn},
\end{align}
where we use the identity
$$
\textbf{1}_{(m,n)=1} = \sum_{d|(m,n)} \mu(d) .
$$
Noting that $a_{mn}$ in \eqref{fundamental lemma 3} satisfies the definition of \eqref{definition of a_mn}, we apply Lemma~\ref{a_mn} to \eqref{fundamental lemma 3} and find that
\begin{align}\label{fundamental lemma 4}
\sum_{\substack{p_1 , \cdots , p_u}} \cS(\cA_{p_1, \cdots, p_u}, p_1) = \sum_{n \sim N} \sum_{\substack{m \\ mn \in \cB}} a_{mn} \gamma(mn)^{\gamma-1} + O(x^{\theta + \gamma - 1 -\varepsilon}).
\end{align} 
The proof is finished if we prove that the double sum of the right-hand side in \eqref{fundamental lemma 4} is
$$
\gamma x^{\gamma-1} \sum_{\substack{p_1 , \cdots , p_u}} \cS(\cB_{p_1, \cdots, p_u}, p_1) + O(x^{\theta + \gamma - 1 - \varepsilon}).
$$ 
Since $a_{mn} \geqslant 0$, we subtract the two main terms and use the trivial bound to get
\begin{align*}
\nonumber
&\sum_{n \sim N} \sum_{\substack{m \\ mn \in \cB}} a_{mn} \gamma(mn)^{\gamma-1} - \gamma x^{\gamma-1} \sum_{\substack{p_1 , \cdots , p_u}} \cS(\cB_{p_1, \cdots, p_u}, p_1) \\ 
\nonumber
=&\gamma \sum_{n \sim N} \sum_{\substack{m \\ mn \in \cB}} a_{mn} ((mn)^{\gamma-1}-x^{\gamma-1}) \\
\leqslant&  \gamma ( x^{\gamma-1} - (x+y)^{\gamma-1})\sum_{n \sim N} \sum_{\substack{m \\ mn \in \cB}} a_{mn} \ll y^2x^{\gamma-2} \ll x^{\theta +\gamma -1 - \varepsilon}.
\end{align*} 

As for \eqref{fundamental lemma 2}, we take $z = x^{4\theta + 6\gamma - 9 - \varepsilon}$ and suppose that $\varepsilon, \gamma, \theta, N$ satisfy the conditions of Proposition~\ref{lem:type1} and \ref{lem:type2_1}. We note that
$$
\sum_{n \sim N} a_n \cS(\cA_n, z) = \sum_{n \sim N} a_n \sum_{\substack{ m \\ mn\in \cA \\ (m,P(z))=1}}1 =\sum_{n \sim N} a_n \sum_{\substack{ d|P(z) \\ mnd \in \cA}} \mu(d),
$$
where we change the variable $m$ by $md$ in the last equality. Once we show that
\begin{align}\label{fundamental lemma 5}
\sum_{n \sim N} a_n \sum_{\substack{ d|P(z) \\ mnd \in \cA}} \mu(d) = \sum_{n \sim N} a_n \sum_{\substack{ d|P(z) \\ mnd \in \cB}} \mu(d) \gamma (mnd)^{\gamma-1} + O(x^{\theta + \gamma - 1 - \varepsilon}),
\end{align}
by the similar arguments of \eqref{fundamental lemma 1}, the proof is done. We divide the sum in the left-hand side of \eqref{fundamental lemma 5} into two parts
\begin{align*}
\Sigma_1 \defeq \sum_{n \sim N} a_n \sum_{\substack{ d|P(z) \\ mnd \in \cA \\ nd \leqslant x^{3\theta+5\gamma-7-\varepsilon}}} \mu(d), \quad
\Sigma_2 \defeq \sum_{n \sim N} a_n \sum_{\substack{ d|P(z) \\ mnd \in \cA \\ nd > x^{3\theta+5\gamma-7-\varepsilon}}} \mu(d).
\end{align*}
In $\Sigma_1$ we produce a new variable $k = nd$ and get
$$
\Sigma_1 = \sum_{k \leqslant x^{3\theta+5\gamma-7-\varepsilon}} \sum_{\substack{m \\ km \in \cA }} b_k, \quad \text{where} \quad b_k \defeq \sum_{\substack{nd=k \\ n \sim N \\ d|P(z)}}a_n \mu(d).
$$
Obviously $|b_k| \ll x^{\varepsilon}$, so $\Sigma_1$ is a type I sum. By \eqref{harman sieve type I} in Lemma~\ref{harman sieve type I II information}, we have that
$$
\Sigma_1 = \sum_{n \sim N} a_n \sum_{\substack{ d|P(z) \\ mnd \in \cB \\ nd \leqslant x^{3\theta+5\gamma-7-\varepsilon}}} \mu(d) \gamma (mnd)^{\gamma-1} + O(x^{\theta + \gamma - 1 - \varepsilon}).
$$
Now we write $p_1$ as the largest prime factor of $d$ and replace $d$ by $pd$ to obtain that
\begin{align}\label{fundamental lemma 6}
\Sigma_2 = \sum_{n \sim N} a_n \sum_{p_1 < z} \sum_{\substack{p_1d|P(z) \\ mnp_1d \in \cA \\ np_1d > x^{3\theta+5\gamma-7-\varepsilon}}} \mu(p_1d) = - \sum_{n \sim N}a_n \sum_{p_1 < z} \sum_{\substack{d|P(p_1) \\ mnp_1d \in \cA \\ np_1d > x^{3\theta+5\gamma-7-\varepsilon}}} \mu(d).
\end{align}
We divide the last sum of \eqref{fundamental lemma 6} into two parts , say
\begin{align*}
&\Sigma_3 \defeq \sum_{n \sim N}a_n \sum_{p_1 < z} \sum_{\substack{d|P(p_1) \\ mnp_1d \in \cA \\ nd \leqslant x^{3\theta+5\gamma-7-\varepsilon} < np_1d}} \mu(d) , \\
&\Sigma_4 \defeq \sum_{n \sim N}a_n \sum_{p_1 < z} \sum_{\substack{d|P(p_1) \\ mnp_1d \in \cA \\ nd > x^{3\theta+5\gamma-7-\varepsilon}}} \mu(d).
\end{align*}
As for $\Sigma_3$, we draw two new variables, $k = nd$ and $l=mp_1$. Since $p_1 < z$, we note that $p_1k > x^{3\theta+5\gamma-7-\varepsilon} \Rightarrow k > x^{2 - \theta - \gamma +\varepsilon}.$ Hence similar to the arguments of \eqref{fundamental lemma 3}, we have that $\Sigma_3$ is a type II sum in the form of 
$$
\Sigma_3 = \sum_{\substack{ x^{2-\theta -\gamma + \varepsilon} \leqslant k \leqslant x^{3\theta+5\gamma-7-\varepsilon}}} \sum_{\substack{ l \\ kl\in \cA}} a_{kl},
$$ 
where 
$$
a_{kl} \defeq  \sum_{\substack{ nd = k \\ n \sim N \\ d|P(p_1)}} a_n \mu(d)\sum_{\substack{mp_1 = l \\ p_1 < z}}1.
$$
By Lemma~\ref{a_mn}, we have
$$
\Sigma_3 = \sum_{n \sim N}a_n \sum_{p_1 < z} \sum_{\substack{d|P(p_1) \\ mnp_1d \in \cB \\ nd \leqslant x^{3\theta+5\gamma-7-\varepsilon} < np_1d}} \mu(d) \gamma (mnp_1d)^{\gamma-1} + O(x^{\theta + \gamma - 1 - \varepsilon}).
$$
We treat $\Sigma_4$ similarly to $\Sigma_2$ and write it as the sum of the following two sums
\begin{align*}
&\Sigma_5 \defeq \sum_{n \sim N}a_n \sum_{p_2 < p_1 < z} \sum_{\substack{d|P(p_2) \\ mnp_2p_1d \in \cA \\ nd \leqslant x^{3\theta+5\gamma-7-\varepsilon} < np_2d}} \mu(d) , \\
&\Sigma_6 \defeq \sum_{n \sim N}a_n \sum_{p_2 < p_1 < z} \sum_{\substack{d|P(p_2) \\ mnp_2p_1d \in \cA \\ nd > x^{3\theta+5\gamma-7-\varepsilon}}} \mu(d).
\end{align*}
We deal with $\Sigma_5$ as we did with $\Sigma_3$ to obtain a similar asymptotic formula and we give further decomposition for $\Sigma_6$. We can continue in this treatment to obtain each $\Sigma_{2j-1}$ for which we can apply Lemma~\ref{a_mn} and $\Sigma_{2j}$ for which we give further decomposition. Since the integers in the interval $(x, x+y]$ have $< \log x$ prime divisors after at most $\log x$ such steps we will obtain an empty $\Sigma_{2j}$. Thus we have given asymptotic formula for all the occurring sums. Clearly, combining the asymptotic formula for all $\Sigma_{2j-1}$ we complete the proof of \eqref{fundamental lemma 5} for $N \leqslant x^{3\theta +5\gamma-7 -\varepsilon}$.

When  $\varepsilon, \gamma, \theta, N$ satisfy the conditions of Proposition~\ref{lem:type1} and \ref{lem:type2_2}, the proof is the same. 
\end{proof}

\subsection{Harman Sieve}\label{Harman sieve}
In this section, we repeatedly use the Buchstab identity together with asymptotic formulas in Lemma~\ref{The Fundamental Lemma} to decompose $\cS(\cA,z)$ such that the loss (i.e. the sums discarded) is as small as possible. Here we introduce an important function called Buchstab’s function $w(u)$ (see more details in \cite[Page~14-17]{Ha}) which is defined to be the continuous solution of the differential-difference equation
\begin{align}\label{buchstab equation}
	\left\{
	\begin{aligned}
		w(u) &= \frac{1}{u}, && \text{for } 1 < u \leqslant 2, \\
		(uw(u))' &= w(u-1), && \text{for } u > 2.
	\end{aligned}
	\right.
\end{align} 

We first consider the case when $\theta \in (2/3,0.87]$. Pick $z=x^{2\theta+6\gamma-7}$ and the range of type I and type II sums includes
$$
\Omega_I = [1,x^{\theta+4\gamma-4-\varepsilon}]
$$
and
\begin{align*}
	\Omega_{II} = [x^{3-\theta-2\gamma+\varepsilon},x^{\theta+4\gamma-4-\varepsilon}] \cup [x^{5-\theta-4\gamma+\varepsilon},x^{\theta+2\gamma-2-\varepsilon}] 
\end{align*}
by Proposition~\ref{lem:type1} and \ref{lem:type2_2}. We first apply Buchstab’s identity twice and get
\begin{align}\label{s1s2s3}
\nonumber	\cS(\cA,(2x)^{1/2}) &= \cS(\cA,z) - \sum_{z \leqslant p \leqslant (2x)^{1/2}} \cS(\cA_p,z) + \sum_{z \leqslant p_2 < p_1 \leqslant (2x)^{1/2}} \cS(\cA_{p_1p_2},p_2) \\
	& \eqdef \cS_1 - \cS_2 + \cS_3.
\end{align}
Equation \eqref{fundamental lemma 2} in Lemma~\ref{The Fundamental Lemma} provides asymptotic formulas for $\cS_1$ and $\cS_2$. As for $\cS_3$, noting that not all $p_1, p_2$ belong to the range of type II sum, say \eqref{fundamental lemma 1} in Lemma~\ref{The Fundamental Lemma}, we split the applicable range and discard the rest. Hence we have the following further decomposition.
\begin{align}\label{s4s5}
\nonumber	\cS_3 &= \sum_{\substack{z \leqslant p_2 < p_1 \leqslant (2x)^{1/2} \\ \{p_1,p_2,p_1p_2\} \cap \Omega_{II} \neq \emptyset}} \cS(\cA_{p_1p_2},p_2) + \sum_{\substack{z \leqslant p_2 < p_1 \leqslant (2x)^{1/2} \\ \{p_1,p_2,p_1p_2\} \cap \Omega_{II} =                                                                    \emptyset}} \cS(\cA_{p_1p_2},p_2) \\
	& \eqdef \cS_4 + \cS_5. 
\end{align}
For $\cS_4$ we use \eqref{fundamental lemma 1} in Lemma~\ref{The Fundamental Lemma} and we decompose
\begin{align}\label{s6tos12}
\nonumber \cS_5 &= \sum_{\substack{z \leqslant p_2 < p_1 < x^{3-\theta-2\gamma+\varepsilon} \\ p_1p_2 < x^{3-\theta-2\gamma+\varepsilon}}} \cS(\cA_{p_1p_2},p_2)+ \sum_{\substack{z \leqslant p_2 < p_1 < x^{3-\theta-2\gamma+\varepsilon} \\ x^{\theta + 4\gamma -4 - \varepsilon} < p_1p_2 < x^{5-\theta-4\gamma+\varepsilon}}} \cS(\cA_{p_1p_2},p_2) \\
\nonumber &+ \sum_{\substack{z \leqslant p_2 < p_1 < x^{3-\theta-2\gamma+\varepsilon} \\ p_1p_2 > x^{\theta+2\gamma-2 - \varepsilon}}} \cS(\cA_{p_1p_2},p_2) + \sum_{\substack{z \leqslant p_2 < x^{3-\theta-2\gamma+\varepsilon} \\ x^{\theta + 4\gamma -4 - \varepsilon} < p_1 < x^{5-\theta-4\gamma+\varepsilon} \\x^{\theta + 4\gamma -4 - \varepsilon} < p_1p_2 < x^{5-\theta-4\gamma+\varepsilon}}} \cS(\cA_{p_1p_2},p_2) \\
\nonumber &+ \sum_{\substack{z \leqslant p_2 < x^{3-\theta-2\gamma+\varepsilon} \\ x^{\theta + 4\gamma -4 - \varepsilon} < p_1 < x^{5-\theta-4\gamma+\varepsilon} \\p_1p_2 > x^{\theta + 2\gamma -2 - \varepsilon} }} \cS(\cA_{p_1p_2},p_2) + \sum_{\substack{x^{\theta + 4\gamma -4 - \varepsilon} < p_2 < p_1 < x^{5-\theta-4\gamma+\varepsilon} \\ x^{\theta + 4\gamma -4 - \varepsilon} < p_1p_2 < x^{5-\theta-4\gamma+\varepsilon}}} \cS(\cA_{p_1p_2},p_2) \\
\nonumber &+ \sum_{\substack{x^{\theta + 4\gamma -4 - \varepsilon} < p_2 < p_1 < x^{5-\theta-4\gamma+\varepsilon} \\ p_1p_2 > x^{\theta+2\gamma-2 - \varepsilon}}} \cS(\cA_{p_1p_2},p_2) \\
&\eqdef \cS_6 + \cS_7 +\cS_8+\cS_9+\cS_{10}+\cS_{11}+\cS_{12}.
\end{align}
From \eqref{s1s2s3}, \eqref{s4s5} and \eqref{s6tos12}, we deduce that
$$
\cS(\cA,(2x)^{1/2}) = \cS_1 - \cS_2 + \cS_4 + \cS_6 + \cS_7 +\cS_8+\cS_9+\cS_{10}+\cS_{11}+\cS_{12},
$$
where $\cS_j$ can be evaluated asymptotically, except for $j=6,7, \cdots,12$. Obviously the same decomposition also holds for $\cS(\cB,(2x)^{1/2})$, that is,
$$
\cS(\cB,(2x)^{1/2}) = \cS'_1 - \cS'_2 + \cS'_4 + \cS'_6 + \cS'_7 + \cS'_8+\cS'_9+\cS'_{10}+\cS'_{11}+\cS'_{12},
$$
where $\cS'_j$ is defined similarly to $\cS_j$ with the only difference that $\cA$ is replaced by $\cB$. Since $\cS_j = \gamma x^{\gamma-1} \cS'_j(1+o(1))$ except for $j=6,7,\cdots,12$, we obtain that
\begin{align} \label{s100}
\nonumber \cS(\cA,(2x)^{1/2}) &= \gamma x^{\gamma-1} (1+o(1)) \Big( \cS(\cB,(2x)^{1/2}) - \cS'_6 - \cS'_7 - \cS'_8 - \cS'_9 - \cS'_{10} - \cS'_{11} - \cS'_{12}\Big)  \\
\nonumber &\quad + \cS_6 + \cS_7 + \cS_8+\cS_9+\cS_{10}+\cS_{11}+\cS_{12} \\
&\geqslant \gamma x^{\gamma-1} (1+o(1)) \Big( \cS(\cB,(2x)^{1/2}) - \cS'_6 - \cS'_7 - \cS'_8 - \cS'_9 - \cS'_{10} - \cS'_{11} - \cS'_{12}\Big).
\end{align}
Therefore, the remaining task is to calculate the contributions of $\cS'_6 , \cS'_7$ to $\cS'_{12}$. The calculation is standard and we refer interested readers to \cite[Page~14-17]{Ha}. 

We calculate $\cS'_6 , \cS'_7$ to $\cS'_{12}$ with $\theta=0.80$ and $\gamma=0.9188$ as an example. Let us first define the corresponding region $\cD_6, \cD_7, \cdots ,\cD_{12}$ to be
\begin{align*}
&\cD_6 \defeq \{ (\alpha,\beta) : 0.1128 < \alpha < \beta < 0.3624, 0 < \alpha + \beta <0.3624 \},\\
&\cD_7 \defeq \{ (\alpha,\beta) : 0.1128 < \alpha < \beta < 0.3624, 0.4752 < \alpha + \beta < 0.5248 \},\\
&\cD_8 \defeq \{ (\alpha,\beta) : 0.1128 < \alpha < \beta < 0.3624, 0.6376 < \alpha + \beta < 1 \},\\
&\cD_9 \defeq \{ (\alpha,\beta) : 0.1128 < \alpha < 0.3624, 0.4752 < \beta < 0.5248, 0.4752 < \alpha + \beta < 0.5248 \},\\
&\cD_{10} \defeq \{ (\alpha,\beta) : 0.1128 < \alpha < 0.3624, 0.4752 < \beta < 0.5248, 0.6376 < \alpha + \beta < 1 \},\\
&\cD_{11} \defeq \{ (\alpha,\beta) : 0.4752 < \alpha < \beta < 0.5248, 0.4752 < \alpha + \beta < 0.5248 \},\\
&\cD_{12} \defeq \{ (\alpha,\beta) : 0.4752 < \alpha < \beta < 0.5348, 0.6376 < \alpha + \beta < 1 \}.
\end{align*}
Following the arguments of \cite[Page~14-17]{Ha}, we have
\begin{align*}
&\cS'_6 = I_6 \cS(\cB,(2x)^{1/2}) + O\Big(\frac{y}{\log ^2x}\Big), \\
&\quad \vdots \\
&\cS'_{12} = I_{12} \cS(\cB,(2x)^{1/2}) + O\Big(\frac{y}{\log ^2x}\Big),
\end{align*}
where
\begin{align*}
&I_6 = \int\int_{\cD_6} \omega\Big(\frac{1-\alpha-\beta}{\alpha}\Big)\frac{1}{\alpha^2 \beta} \, d\alpha d\beta, \\
&\quad \vdots \\
&I_{12} = \int\int_{\cD_{12}} \omega\Big(\frac{1-\alpha-\beta}{\alpha}\Big)\frac{1}{\alpha^2 \beta} \, d\alpha d\beta,
\end{align*}
and $\omega(x)$ is Buchstab function defined as \eqref{buchstab equation}.
Using a MATLAB code, we calculate that
\begin{align}\label{s6'tos12'}
\nonumber&\cS'_6 \leqslant 0.6485\cS(\cB,(2x)^{1/2}), \quad \cS'_7 \leqslant 0.1951\cS(\cB,(2x)^{1/2}), \\
\nonumber&\cS'_8 \leqslant 0.0350\cS(\cB,(2x)^{1/2}), \quad \cS'_9 =0, \\
\nonumber&\cS'_{10} \leqslant 0.1077\cS(\cB,(2x)^{1/2}), \quad \cS'_{11} =0, \\
&\cS'_{12} \leqslant 0.0029\cS(\cB,(2x)^{1/2}).
\end{align} 
By \eqref{s100} and \eqref{s6'tos12'}, we finally arrive at
$$
\cS(\cA,(2x)^{1/2}) \geqslant 0.0108\gamma \frac{yx^{\gamma-1}}{\log x}
$$
with $\theta=0.80$ and $\gamma=0.9188$.

Next we consider the case when $\theta \in (0.87,1)$, we pick $z=x^{4\theta+6\gamma-9}$ and the range of type I and type II sums includes
$$
\Omega_I = [1,x^{3\theta+5\gamma-7-\varepsilon}]
$$
and
\begin{align*}
	\Omega_{II} = &[x^{2-\theta-\gamma+\varepsilon},x^{3\theta+5\gamma-7-\varepsilon}] \cup [x^{8-3\theta-5\gamma+\varepsilon},x^{\theta+\gamma-1-\varepsilon}].
\end{align*}
under the conditions of \eqref{condition of type1}, \eqref{condition of type2_1}. We first apply Buchstab’s identity twice and get
\begin{align}\label{f1f2f3}
\nonumber	\cS(\cA,(2x)^{1/2}) &= \cS(\cA,z) - \sum_{z \leqslant p \leqslant (2x)^{1/2}} \cS(\cA_p,z) + \sum_{z \leqslant p_2 < p_1 \leqslant (2x)^{1/2}} \cS(\cA_{p_1p_2},p_2) \\
	& \eqdef \cF_1 - \cF_2 + \cF_3.
\end{align}
Similarly, \eqref{fundamental lemma 2} in Lemma~\ref{The Fundamental Lemma} provides asymptotic formulas for $\cF_1$ and $\cF_2$. We split the applicable range and discard the rest for $\cF_3$. Hence we have the following further decomposition.
\begin{align}\label{f4f5}
\nonumber	\cF_3 &= \sum_{\substack{z \leqslant p_2 < p_1 \leqslant (2x)^{1/2} \\ \{p_1,p_2,p_1p_2\} \cap \Omega_{II} \neq \emptyset}} \cS(\cA_{p_1p_2},p_2) + \sum_{\substack{z \leqslant p_2 < p_1 \leqslant (2x)^{1/2} \\ \{p_1,p_2,p_1p_2\} \cap \Omega_{II} =                                                                    \emptyset}} \cS(\cA_{p_1p_2},p_2) \\
& \eqdef \cF_4 + \cF_5. 
\end{align}
For $\cS_4$ we use \eqref{fundamental lemma 1} in Lemma~\ref{The Fundamental Lemma}. Next we decompose $\cF_5$ as
\begin{align}\label{f6tof15}
\nonumber	\cF_5 &= \sum_{\substack{z \leqslant p_2 < p_1 < x^{2-\theta-\gamma+\varepsilon} \\ p_1p_2 < x^{2-\theta-\gamma+\varepsilon}}} \cS(\cA_{p_1p_2},p_2)+ \sum_{\substack{z \leqslant p_2 < p_1 < x^{2-\theta-\gamma+\varepsilon} \\  x^{3\theta+5\gamma-7-\varepsilon} < p_1p_2 < x^{8-3\theta-5\gamma+\varepsilon}}} \cS(\cA_{p_1p_2},p_2) \\
\nonumber &\quad + \sum_{\substack{z \leqslant p_2 < p_1 < x^{2-\theta-\gamma+\varepsilon} \\ x^{\theta+\gamma-1-\varepsilon} < p_1p_2 < x}} \cS(\cA_{p_1p_2},p_2) + \sum_{\substack{z \leqslant p_2 < x^{2-\theta-\gamma+\varepsilon} \\ x^{3\theta+5\gamma-7-\varepsilon} < p_1 < (2x)^{1/2} \\ x^{3\theta+5\gamma-7-\varepsilon} < p_1p_2 < x^{8-3\theta-5\gamma+\varepsilon}}} \cS(\cA_{p_1p_2},p_2) \\
\nonumber &\quad + \sum_{\substack{z \leqslant p_2 < x^{2-\theta-\gamma+\varepsilon} \\ x^{3\theta+5\gamma-7-\varepsilon} < p_1 < (2x)^{1/2} \\ x^{\theta+\gamma-1-\varepsilon} < p_1p_2 < x}} \cS(\cA_{p_1p_2},p_2) + \sum_{\substack{x^{3\theta+5\gamma-7-\varepsilon} \leqslant p_2 < p_1 < (2x)^{1/2} \\ x^{3\theta+5\gamma-7-\varepsilon} < p_1p_2 < x^{8-3\theta-5\gamma+\varepsilon}}} \cS(\cA_{p_1p_2},p_2)\\ 
\nonumber &\quad + \sum_{\substack{x^{3\theta+5\gamma-7-\varepsilon} \leqslant p_2 < p_1 < (2x)^{1/2} \\ x^{\theta+\gamma-1-\varepsilon} < p_1p_2 < x}} \cS(\cA_{p_1p_2},p_2) \\
&\eqdef \cF_6 + \cF_7 +\cF_8+\cF_9+\cF_{10}+\cF_{11}+\cF_{12}.
\end{align}
From \eqref{f1f2f3}, \eqref{f4f5} and \eqref{f6tof15}, we deduce that
$$
\cS(\cA,(2x)^{1/2}) = \cF_1 - \cF_2 + \cF_4 + \cF_6 + \cF_7 +\cF_8+\cF_9+\cF_{10}+\cF_{11}+\cF_{12},
$$
where $\cS_j$ can be evaluated asymptotically, except for $j=6,7, \cdots,12$. Same as before, we finish proof after using the same decomposition to $\cS(\cB,(2x)^{1/2})$ and the standard process of Harman sieve. 

Here we calculate the case when $\theta = 0.90$ and $\gamma = 0.9168$ as an example and give the results calculated by the MATLAB code. First define the corresponding region $\cT_6, \cT_7, \cdots, \cT_{12}$ to be
\begin{align*}
	&\cT_6 \defeq \{ (\alpha,\beta) : 0.1008 \leqslant \alpha < \beta < 0.1832, 0 < \alpha + \beta < 0.1832 \},\\
	&\cT_7 \defeq \{ (\alpha,\beta) : 0.1008 \leqslant \alpha < \beta < 0.1832, 0.284 < \alpha + \beta < 0.716 \},\\
	&\cT_8 \defeq \{ (\alpha,\beta) : 0.1008 \leqslant \alpha < \beta < 0.1832, 0.8186 < \alpha + \beta < 1 \},\\
	&\cT_9 \defeq \{ (\alpha,\beta) : 0.1008 \leqslant \alpha < 0.1832, 0.284 < \beta < 0.5, 0.284 < \alpha + \beta < 0.716 \},\\
	&\cT_{10} \defeq \{ (\alpha,\beta) : 0.1008 \leqslant \alpha < 0.1832, 0.284 < \beta < 0.5, 0.8186 < \alpha + \beta < 1 \},\\
	&\cT_{11} \defeq \{ (\alpha,\beta) : 0.284 \leqslant \alpha < \beta < 0.5, 0.284 < \alpha + \beta < 0.716 \},\\
	&\cT_{12} \defeq \{ (\alpha,\beta) : 0.284 \leqslant \alpha < \beta < 0.5, 0.8186 < \alpha + \beta < 1 \}.
\end{align*}
Following the arguments of \cite[Page~14-17]{Ha}, we have
\begin{align*}
	&\cF'_6 = I_6 \cS(\cB,(2x)^{1/2}) + O\Big(\frac{y}{\log ^2x}\Big), \\
	&\vdots \\
	&\cF'_{12} = I_{12} \cS(\cB,(2x)^{1/2}) + O\Big(\frac{y}{\log ^2x}\Big),
\end{align*}
where
\begin{align*}
	&I_6 = \int\int_{\cT_6} \omega\Big(\frac{1-\alpha-\beta}{\alpha}\Big)\frac{1}{\alpha^2 \beta} \, d\alpha d\beta, \\
	&\vdots \\
	&I_{12} = \int\int_{\cT_{12}} \omega\Big(\frac{1-\alpha-\beta}{\alpha}\Big)\frac{1}{\alpha^2 \beta} \, d\alpha d\beta,
\end{align*}
and $\omega(x)$ is Buchstab function defined as \eqref{buchstab equation}.
Using a MATLAB code, we calculate that
\begin{align}\label{f6'tof12'}
	\nonumber&\cF'_6 =0, \quad \cF'_7 \leqslant 0.2394\cS(\cB,(2x)^{1/2}), \\
	\nonumber&\cF'_8 =0, \quad \cF'_9 \leqslant 0.6004\cS(\cB,(2x)^{1/2}), \\
	\nonumber&\cF'_{10} =0, \quad \cF'_{11} \leqslant 0.0973\cS(\cB,(2x)^{1/2}), \\
	&\cF'_{12} \leqslant 0.0532\cS(\cB,(2x)^{1/2}).
\end{align} 
Thus by \eqref{f6'tof12'}, we finally arrive at
$$
\cS(\cA,(2x)^{1/2}) \geqslant 0.0097\gamma \frac{yx^{\gamma-1}}{\log x}
$$
with $\theta=0.90$ and $\gamma=0.9168$.

\section*{Acknowledgements}	
This article is supported by the National Natural Science Foundation of China (No. 11901447, 12271422).


\begin{thebibliography}{99}
		
		\bibitem{BHP}
		R. C. Baker, G. Harman and J. Pintz,
		The difference between consecutive primes. II.
		\emph{Proc. London Math. Soc. (3)} 83 (2001), no. 3, 532--562.
		
		\bibitem{BaHaRi}
		R.~C. Baker, G. Harman and J. Rivat, 
		Primes of the form $[n^c]$. 
		\emph{J. Number Theory} 50 (1995), no.~2, 261--277.
		
		\bibitem{BF}
		A. Balog and J.~B. Friedlander, 
		A hybrid of theorems of Vinogradov and Piatetski-Shapiro, 
		\emph{Pacific J. Math.} {\bf 156} (1992), no.~1, 45--62.
		
		\bibitem{FoIw}
		\'E. Fouvry and H. Iwaniec,
		Exponential sums with monomials. 
		\emph{J. Number Theory} 33 (1989), no.~3, 311--333.
		

		\bibitem{GraKol}
		S.~W.~Graham and G.~Kolesnik,
		\emph{Van der Corput's method of exponential sums}.
		London Mathematical Society Lecture Note Series, 126.
		Cambridge University Press, Cambridge, 1991.
		
		\bibitem{GGL}
		L. Guo, V. Z. Guo and L. Lu,
		The Piatetski-Shapiro prime number theorem.
		arXiv:2505.10391.
		
		\bibitem{GM2025}
		L. Guth and J. Maynard,
		New large value estimates for Dirichlet polynomials.
		arXiv 2405.20552
		
		\bibitem{Ha}
		G. Harman, 
		\emph{Prime-detecting sieves}.
		London Mathematical Society Monographs Series, 33. 
		Princeton Univ. Press, Princeton, NJ, 2007.
		
		\bibitem{HB}
		D.~R. Heath-Brown, Prime numbers in short intervals and a generalized Vaughan identity,
		\emph{Canadian J. Math.} {\bf 34} (1982), no.~6, 1365--1377.
		
		\bibitem{HB2}
		D.~R.~Heath-Brown,
		The Pjatecki\u{i}-\u{S}apiro prime number theorem. 
		\emph{J.\ Number Theory} 16 (1983), 242--266.
		
		\bibitem{Hoh1930}
		G. Hoheisel, 
		Trimzahlprobleme in der Analysis. 
		\emph{Sitz. Preuss. Akad. Wiss.} 2 (1930) 1--13.


		
		\bibitem{Hux}
		M.~N. Huxley, 
		On the difference between consecutive primes. 
		\emph{Invent. Math.} 15 (1972), 164--170.
		
		\bibitem{Kum}
		A.~V. Kumchev, 
		On the distribution of prime numbers of the form $[n^c]$. 
		\emph{Glasg. Math. J.} 41 (1999), no.~1, 85--102.
		

		\bibitem{PS}
		I.~I.~Piatetski-Shapiro, 
		On the distribution of prime numbers in the sequence of the form $\fl{f(n)}$,
		\emph{Mat.\ Sb.} 33 (1953), 559--566.
			
		\bibitem{RiSa}
		J.~Rivat and S.~Sargos,
		Nombres premiers de la forme $\fl{n^c}$.
		\emph{Canad.\ J.\ Math.} 53 (2001), no.~2, 414--433.
		
		\bibitem{RiWu}
		J.~Rivat and J.~Wu,
		Prime numbers of the form $\fl{n^c}$.
		\emph{Glasg.\ Math.\ J.} 43 (2001), no.~2, 237--254. 
		
		\bibitem{RS2006}
		O. Robert and P. Sargos, 
		Three-dimensional exponential sums with monomials. 
		\emph{J. Reine Angew. Math.} 591 (2006), 1--20.
		
		\bibitem{SDP}
		Y. Sun, S. Du and H. Pan,
		Vinogradov's theorem with Piatetski-Shapiro primes.
		\emph{Int. Math. Res. Not. IMRN} 2025, no. 15, rnaf125.
		
		\bibitem{Vaal}
		J.~D.~Vaaler,
		Some extremal problems in Fourier analysis. 
		\emph{Bull.\ Amer.\ Math.\ Soc.} 12 (1985), 183--216.
		


		
	\end{thebibliography}
\end{document}